\date{\today}
\author{Stephen Cantrell}
\address{Department of Mathematics, 
University of Warwick,
Coventry, CV4 7AL, UK}
\email{stephen.cantrell@warwick.ac.uk}
\author{Mark Pollicott}
\address{Department of Mathematics, 
University of Warwick,
Coventry, CV4 7AL, UK}
\email{masdbl@warwick.ac.uk}
\date{\today. 
\\
2020 \textit{Mathematics Subject Classification: 30F10, 60F05, 11N45}. 
\\
\textit{Key words and phrases: Translation surfaces, saddle connections, counting, limit laws}.
}
\renewcommand{\E}{\mathcal E}
\renewcommand{\G}{\mathcal G}
\renewcommand{\P}{\mathcal P}
\renewcommand{\Re}{\mathfrak{Re}}
\title[Counting statistics for geodesics on flat surfaces]{Counting statistics for geodesics on flat surfaces}
\begin{document}
\maketitle

\begin{abstract}
We study counting limit laws that compare length functions on infinite graphs. We then apply these results to flat surfaces to obtain a statistical comparison between the geometric length and the number of singularities visited by geodesic paths.
\end{abstract}

\section{Introduction}

Consider a closed, hyperbolic surface $(V,\mathfrak{g})$. The fundamental group $\Gamma = \pi_1(V)$ acts cocompactly on the hyperbolic plane $(\mathbb{H}^2,d)$. Through this action, the conjugacy classes $\text{conj}(\Gamma)$ in $\Gamma$ are in bijection with the closed geodesics on $(V,\mathfrak{g})$. If we equip $\Gamma$ with a finite generating set $S$ then a closed geodesic $\gamma$ on $(V,\mathfrak{g})$ can be assigned two natural lengths: the geometric length $\ell(\gamma)$ (according to the Riemannian metric) and the word length $|\gamma|_S$ (the $S$ length of the shortest word in the corresponding conjugacy class). These lengths are comparable in the sense that there exists $C > 1$ such that
\begin{equation}\label{eq.comp}
C^{-1} |\gamma|_S \le \ell(\gamma) \le C |\gamma|_S
\end{equation}
for all closed geodesics $\gamma$. We are then led to ask if there is a more refined result that compares these lengths on average. That is, can we prove counting limit laws that compare these lengths? This and related questions have been studied extensively, see \cite{milnor}, \cite{horsham.sharp}, \cite{rivin}, \cite{CalegariFujiwara2010}, \cite{chas.lalley}, \cite{GTT2},  \cite{GTT}, \cite{cantrell.sert} amongst many other works.

In this work we study an analogue of the above question in the setting of translation surfaces. Translation surfaces are surfaces that are flat except at finitely many points called singularities. These singularities can be thought of as concentrated points of negative curvature. Given our above discussion about hyperbolic surfaces it is natural to ask whether we can compare analogues of word length and geodesic length for geodesic paths on translation surfaces.

\subsection{Translation surfaces}\label{sec.introts}
 A translation surface is a  compact Riemann surface  $X$ equipped with a  (non-trivial)  holomorphic one-form $\omega$.
Equivalently, $X$ is a compact surface with a flat metric except at a finite set $\mathcal{X}= \{x_1, \ldots, x_n\}$ of singular points with cone angles $2\pi (k(x_i)+1)$, where $k(x_i) \in \mathbb N$, for $i=1, \ldots, n$. The singularities correspond to the zeros of $\omega$. In this work we will always assume that $\mathcal{X}$ is non-empty, i.e. our surface has singularities. In a translation surface, a path which does not pass through singularities is a locally distance minimising geodesic if it is a straight line segment. This is also true for saddle connections: straight lines that start and end and singularities and do not pass through any other singularities. We will write $\mathcal{S}$ for the collection of saddle connections. 

There is a rich body of research that studies various aspects of translation surfaces. One particular focus has been to understand  the lengths of saddle connections. See for example \cite{masur}, \cite{eskin2003}, \cite{AC}, \cite{CD},  \cite{Dozier}, \cite{Pollicott-Colognese}, \cite{AFM}, \cite{CF}. 


There are infinitely many local geodesics emanating from a fixed base point on $(X,\omega)$. Many of these do not visit singularities and so correspond to straight lines. In this work, we restrict our study to local geodesics that visit singularities. That is, we study and compare various lengths associated to saddle connection paths. 
 \begin{definition}
A \textit{saddle connection path} $p
= (s_{i_1},\ldots,s_{i_n})$  is a finite string of oriented saddle collections $s_{i_1},\ldots,s_{i_n} \in \mathcal{S}$ which form a local geodesic path in $(X,\omega)$. Equivalently a saddle connection path is a local geodesic on $(X,\omega)$ that starts and ends at a singularity.
\end{definition}
\begin{figure}
\begin{tikzpicture}[scale=0.8, every node/.style={scale=0.8}]
\draw[gray, thick] (0,0)--(6,0);
\draw[gray, thick] (3,3)--(3,6);
\draw[gray, thick] (0,0)--(0,6);
\draw[gray, thick] (6,0)--(6,3);
\draw[gray, thick] (3,6)--(0,6);
\draw[gray, thick] (3,3)--(6,3);
\draw[gray, thick] (0,0)--(0,3);
\draw[gray, thick] (6,0)--(6,3);
\draw[gray, thick] (6,9)--(6,9);
\draw [fill=red] (0,0) circle [radius=0.08];
\draw [fill=red] (0,3) circle [radius=0.08];
\draw [fill=red] (3,3) circle [radius=0.08];
\draw [fill=red] (3,6) circle [radius=0.08];
\draw [fill=red] (0,6) circle [radius=0.08];
\draw [fill=red] (3,0) circle [radius=0.08];
\draw [fill=red] (6,0) circle [radius=0.08];
\draw [fill=red] (6,3) circle [radius=0.08];
\node at (1.5,-0.4) {$1$};
\node at (4.5,-0.4) {$2$};
\node at (1.5,6.4) {$1$};
\node at (4.5,3.4) {$2$};
\node at (-0.4,1.5) {$a$};
\node at (6.4,1.5) {$a$};
\node at (-0.4,4.5) {$b$};
\node at (3.4, 4.5) {$b$};
\end{tikzpicture}
\hskip 0.25cm
\begin{tikzpicture}[scale=0.8, every node/.style={scale=0.8}]
\draw[gray, thick] (0,0)--(6,0);
\draw[gray, thick] (3,3)--(3,6);
\draw[gray, thick] (0,0)--(0,6);
\draw[gray, thick] (6,0)--(6,3);
\draw[gray, thick] (3,6)--(0,6);
\draw[gray, thick] (3,3)--(6,3);
\draw[gray, thick] (0,0)--(0,3);
\draw[gray, thick] (6,0)--(6,3);
\draw[gray, thick] (6,9)--(6,9);
\draw[very thick, blue, -latex] (0,6)--(3,3);
\draw[very thick, blue, -latex] (3,3)--(6,3);
\draw[very thick, blue] (3,3)--(0,1.5);
\draw[very thick, blue, -latex] (6,1.5)--(3,0);
\draw [fill=red] (0,0) circle [radius=0.08];
\draw [fill=red] (0,3) circle [radius=0.08];
\draw [fill=red] (3,3) circle [radius=0.08];
\draw [fill=red] (3,6) circle [radius=0.08];
\draw [fill=red] (0,6) circle [radius=0.08];
\draw [fill=red] (3,0) circle [radius=0.08];
\draw [fill=red] (6,0) circle [radius=0.08];
\draw [fill=red] (6,3) circle [radius=0.08];
\node at (1.5,-0.4) {$1$};
\node at (4.5,-0.4) {$2$};
\node at (1.5,6.4) {$1$};
\node at (4.5,3.4) {$2$};
\node at (-0.4,1.5) {$a$};
\node at (6.4,1.5) {$a$};
\node at (-0.4,4.5) {$b$};
\node at (3.4, 4.5) {$b$};
\node at (1.75, 4.75) {$s_1$};
\node at (4.5, 2.7) {$s_2$};
\node at (1.62, 1.87) {$s_3$};
\end{tikzpicture}
\caption{ A simple way to present a translation surface is by identifying opposite sides of a polygon in the plane. On the left is an L-shaped translation surface with one singularity (indicated by the dots). The picture on the right shows a saddle connection path $p = (s_1,s_2,s_3)$ on the surface. The angle between $s_1$ and $s_2$ is $5\pi/4$ and the angle between $s_2$ and $s_3$ is $13\pi/6$.}
\end{figure}
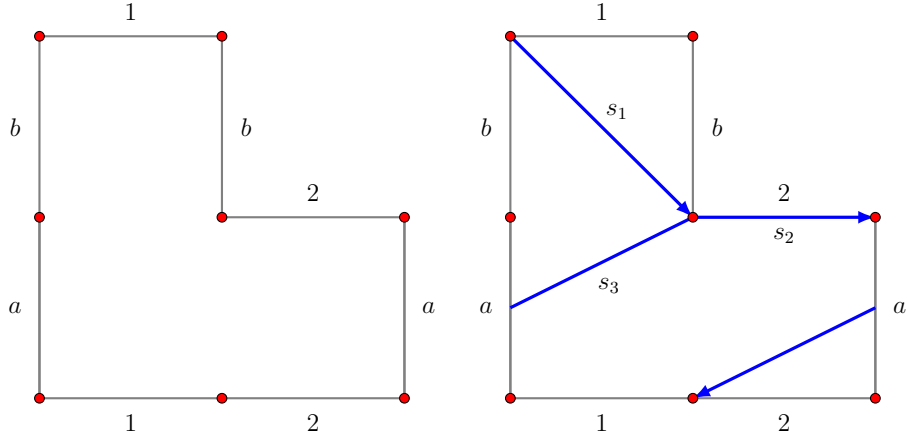

\noindent We write 
\begin{enumerate}[(i)]
\item $\ell(p) = \ell(s_1)+\ell(s_2)+\cdots +\ell(s_n)$ for the sum of the lengths of the constituent saddle connections; and 
\item $|p|$ for the number of singularities visited by $p$, i.e. if $p = (s_{i_1},\ldots,s_{i_n})$ then $|p| = n+1$.
\end{enumerate}
We refer to $\ell(p)$ as the \textit{geometric length} of $p$ and and we refer to $|p|$ as the \textit{singularity} length of $p$.
These quantities will play the role of the geometric length and word lengths associated to closed geodesics on surfaces that we discussed at the start of this work. We will present a statistical comparison between these quantities. 

Given a translation surface $(X,\omega)$, and $x \in \mathcal{X}$ a base point singularity, let $\P_x$ denote the collection of saddle connection paths $p$ starting at $x \in \mathcal{X}$. Write $\P_x(T)$ for the collection of saddle connection paths $p\in \P_x$ with  $\ell(p) <T$ and let $\mu_T$ denote the uniform counting measure on $\P_x(T) \subset \P_x$. 
\begin{theorem}\label{thm.sing}
Let $(X,\omega)$ be a translation surface and fix a singularity $x \in\mathcal{X}$. Let $|\cdot|$ and $\ell(\cdot)$ denote the singularity and geometric length of a saddle connection path respectively. Then there exists $\Lambda > 0$ such that for any $\epsilon > 0$ 
\[
\limsup_{T\to\infty} \frac{1}{T} \log \left( \mu_T \left\{ p \in \P_x : \left| \frac{|p|}{T} - \Lambda \right| > \epsilon \right\}\right) < 0.
\]
Furthermore there exists $\sigma^2 > 0$ such that for any $a,b \in \R, a<b$
\[
\lim_{T\to\infty} \mu_T\left\{ p\in \P_x: \frac{|p| - \Lambda T}{\sqrt{T}} \in [a,b] \right\} = \frac{1}{\sqrt{2 \pi} \sigma}\int_a^b e^{-t^2/2\sigma^2} \ dt.
\]
\end{theorem}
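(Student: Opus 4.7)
The natural strategy is to reduce the theorem to the abstract counting limit laws for length functions on infinite graphs that were developed in the earlier sections of the paper. My first step is to encode saddle connection paths starting at $x$ as paths in a countable directed graph $\mathcal{G}$ whose vertices are the oriented saddle connections of $(X,\omega)$, with an edge $s \to s'$ precisely when the terminal singularity of $s$ is the initial singularity of $s'$ and the interior angle makes the concatenation a local geodesic. Each $p = (s_{i_1},\dots,s_{i_n}) \in \P_x$ then corresponds to a finite path in $\mathcal{G}$ starting at a vertex based at $x$; the geometric length $\ell(p) = \sum_k \ell(s_{i_k})$ becomes an additive length function on $\mathcal{G}$, while the singularity length $|p| = n+1$ is, up to a constant, the combinatorial word length of the coding path. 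The analogue of the comparability \eqref{eq.comp}, applied to saddle connection paths, ensures the two length functions are uniformly comparable so that both belong to the class handled by the earlier abstract results.

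My second step is to verify that $(\mathcal{G}, \ell)$ satisfies the structural hypotheses (irreducibility, a BIP-type condition, finite Gurevich pressure, H\"older regularity of $\ell$) required for the abstract infinite-graph framework to apply. These properties should be inherited from the geometry of $(X,\omega)$: lengths of saddle connections are bounded below by the systole, saddle connections of bounded length exist in all necessary transitions, and the combinatorics of the bounce coding provide enough irreducibility to invoke the general theorems from the earlier sections.

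Once the framework applies, the constants $\Lambda$ and $\sigma^2$ arise from the thermodynamic formalism of the two-parameter transfer operator
\[
\mathcal{L}_{s,t}f(s') \;=\; \sum_{s'' : s' \to s''} e^{-s\,\ell(s'')+t}\,f(s'')
\]
acting on a suitable H\"older space, whose leading eigenvalue $\lambda(s,t)$ is real-analytic in a neighbourhood of $(\delta,0)$, where $\delta$ is the critical exponent of $\ell$. Implicit differentiation of $\lambda(s,t) = 1$ at $(\delta,0)$ expresses $\Lambda$ as a ratio of first partial derivatives and $\sigma^2$ as a combination of second partials. The large deviation bound then follows from strict convexity of $\log\lambda$ via a Chernoff / G\"artner--Ellis type argument, after a Tauberian step that converts the counting measure $\mu_T$ (which counts by $\ell$) into a statement about the distribution of $|p|$; the central limit theorem follows from the Nagaev--Guivarc'h perturbation method applied to $\mathcal{L}_{\delta,\, i\xi/\sqrt{T}}$ together with the same renewal-theoretic comparison.

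The main obstacle I anticipate is establishing non-degeneracy, namely $\sigma^2 > 0$. This requires ruling out a cohomological relation of the form $|p| = c\,\ell(p) + (\text{coboundary}) + O(1)$ on $\mathcal{G}$, which should hold because saddle connection lengths generically take many distinct, incommensurable values, but a clean argument demands either an arithmetic input or a dynamical exclusion of coboundary rigidity on the infinite graph. A related technical issue is that the state space $\mathcal{G}$ is genuinely infinite, so the spectral gap, analyticity of $\lambda(s,t)$, and Nagaev--Guivarc'h perturbation bounds must be shown to hold uniformly despite non-compactness --- which is precisely the content I would import from the paper's abstract theorems about length functions on infinite graphs.
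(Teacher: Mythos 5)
Your overall route---code saddle connection paths by a countable directed graph on the oriented saddle connections and import the paper's abstract counting limit laws for infinite graphs---is the same as the paper's (Theorem \ref{thm.sing} is deduced there from Theorem \ref{thm.general}, which is the graph machinery applied to this coding). But there is a genuine gap at exactly the point you flag as an ``anticipated obstacle'': the positivity $\sigma^2>0$, which is the only substantive content of this theorem beyond citing the general result, is never proved. Saying that saddle connection lengths ``generically take many distinct, incommensurable values'' is not an argument. The paper's proof is concrete: by the non-degeneracy criterion in Theorem \ref{thm.general}, $\sigma^2=0$ would force $|p|=\tau\ell(p)$ for all \emph{closed} saddle connection paths, hence $\ell(p)\in\frac{1}{\tau}\mathbb{Z}$ for all such $p$, and this contradicts the non-arithmeticity hypothesis (T3). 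That hypothesis is itself a nontrivial arithmetic fact, established in Proposition \ref{thm.lengthspec}: an embedded cylinder produces saddle connections of lengths $\sqrt{h^2+(kw)^2-2khw\cos\theta}$, $k\ge1$, and Proposition \ref{prop.irrat} (via a Siegel-type finiteness theorem for integral points on an elliptic curve) shows these contain arbitrarily large rationally independent subsets, so the closed length spectrum cannot lie in a lattice. Without this (or some substitute arithmetic/dynamical input) your proof establishes at best a possibly degenerate CLT, i.e.\ not the stated theorem.

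Two secondary issues. First, your claim that $|p|$ and $\ell(p)$ are uniformly comparable in the sense of \eqref{eq.comp} is false: the paper points out explicitly that there are closed saddle paths with $|p_n|/\ell(p_n)\to0$ (saddle connections can be arbitrarily long). What the framework actually needs is only the one-sided bound $|\varphi(s)|\le C\ell(s)$, which holds because lengths are bounded below by the systole; your argument survives once restated this way, but as written it rests on a false premise. Second, the verification of the structural hypotheses is underpowered: the summability condition (T1) does not follow from the systole bound alone---it uses Masur's quadratic upper bound on the number of saddle connections of length at most $T$---and the connectivity condition (T2) is a genuine geometric input (Dankwart). Also note that the Nagaev--Guivarc'h spectral perturbation scheme you invoke is precisely what the paper says is unavailable here (the relevant Poincar\'e series has poles accumulating on the critical line); the abstract graph theorems you import are instead proved via the Hofbauer--Keller finite truncation $W_{s,t}$, Delange's Tauberian theorem and the method of moments, so citing them as black boxes is fine, but the perturbative mechanism you describe is not the one doing the work.
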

Unlike for the word and geometric length of closed geodesics on surfaces, $|p|$ and $\ell(p)$ are not comparable as in equation $(
\ref{eq.comp})$ : we can find a sequence $p_n$ of closed saddle paths for which the quotient $|p_n|/\ell(p_n)$ converges to $0$ as $n\to\infty$. Nonetheless, our theorem shows that on average, a saddle connection path $p$ visits approximately $\Lambda \ell(p)$ singularities and that (along with the appropriate normalisation) $|p|$ distributes according to a non-degenerate central limit theorem.

We will deduce Theorem \ref{thm.sing} from a  more general comparison result, Theorem \ref{thm.general} which compares geometric length with other quantities. Indeed, there are many natural quantities, other than singularity length and geometric length, which we can associate to a saddle connection path $p$. These include:
\begin{enumerate}
\item the total change in the real (or imaginary)  part when walking along $p$ ; and,
\item the total angle change gone through when traversing $p$.
\end{enumerate}
We now discuss these quantities in more detail.\\
(1) Using the one-form $\omega$ we have a well defined notion of real and imaginary part for a saddle connection: given a saddle connection $s$, the holonomy vector 
\[
\int_s \omega =: \mathfrak{R}_s + i \mathfrak{I}_s \in \C
\]  
encodes the real $\mathfrak{R}_s$ and imaginary $\mathfrak{I}_s$ change along $s$.  We define $|p|_\mathfrak{R}$ and $|p|_\mathfrak{I}$ to be the sum of these changes, respectively, along the saddle connection path $p$.  \\
(2) Suppose that $p$ passes through the singularity $x$. Then $p$ (possibly) changes direction at $x$. We can measure the angle changed by $p$ to get number in $[0,\pi k(x)]$ (where $k(x)$ is as above). This angle is the smallest change (between measuring both clockwise and anticlockwise) made by $p$. Summing these  angle changes along the singularities visited gives the total angle change along $p$, which we denote $|p|_\theta$.

Our general results apply to a large class of functions on saddle connection paths. Suppose that $\varphi : \mathcal{S} \to \R$ is a function that assigns to saddle connection $s$ a real number $\varphi(s)$. Assume further that there exists $C > 0$ such that  $|\varphi(s)| \le C \ell(s)$ for all $s \in \mathcal{S}$. Then we extend $\varphi$ to saddle connection paths in the obvious way: if $p = (s_{i_1}, \ldots, s_{i_n})$ then we set
\[
\varphi(p) = \varphi(s_{i_1}) + \cdots + \varphi(s_{i_n}).
\]
We call such functions \textit{saddle cost functions}.
The functions listed in $(1)$ and $(2)$ as well as the singularity length $|\cdot|$ can all be realised as saddle cost functions.
Our general result is as follows.
\begin{theorem}\label{thm.general}
Let $(X, \omega)$ be a translation surface and fix $x \in \mathcal{X}$. Let $\ell(\cdot)$ denote the  geometric length and let $\varphi(\cdot)$ be a saddle cost function. Then there exists $\Lambda \in \R$ such that for any $\epsilon > 0$ 
\[
\limsup_{T\to\infty} \frac{1}{T} \log \left( \mu_T \left\{ p \in \P_x : \left| \frac{\varphi(p)}{T} - \Lambda \right| > \epsilon \right\}\right) < 0.
\]
Furthermore there exists $\sigma^2 \ge 0$ such that for any $a,b \in \R, a<b$
\[
\lim_{T\to\infty} \mu_T\left\{ p\in \P_x: \frac{\varphi(p) - \Lambda T}{\sqrt{T}} \in [a,b] \right\} = \frac{1}{\sqrt{2 \pi} \sigma}\int_a^b e^{-t^2/2\sigma^2} \ dt.
\]
We also have the following:
\begin{enumerate}[(i)]
\item if $\varphi$ is positive (i.e. $\varphi(p) \ge 0$) and is not identically $0$ then $\Lambda > 0$;
\item we have that $\sigma^2 = 0$  if and only if there exists $\tau \in \R$ such that $\varphi(p) = \tau \ell(p)$  for all closed saddle connection paths $p$ (i.e. saddle connection paths that start and end at the same singularity); and,
\item $\Lambda$ and $\sigma^2$ are independent of the choice of $x \in \mathcal{X}$.
\end{enumerate}
\end{theorem}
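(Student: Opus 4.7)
The plan is to deduce Theorem \ref{thm.general} from the abstract graph-counting framework developed earlier in the paper. First, I would encode the set $\P_x$ as the set of finite directed paths in a countable labelled graph $\G_X$ whose vertices record a singularity together with the incoming direction (equivalently, the terminating saddle connection), and whose directed edges correspond to pairs of saddle connections that can be concatenated into a local geodesic---the admissibility condition being that the turning angle at the shared singularity lies in $[\pi,(2k+1)\pi]$, where $2\pi(k+1)$ is the local cone angle. Each edge $e$ carries two real labels, $\ell(e)$ and $\varphi(e)$, and the hypothesis $|\varphi(s)| \le C\ell(s)$ ensures that the cost is uniformly dominated by the geometric length along edges. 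Under this coding, $\mu_T$ becomes the uniform counting measure on directed paths from the root vertex determined by $x$ with total $\ell$-weight less than $T$, and $\varphi$ is additive along paths.

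With the encoding in place, the exponential large deviation bound and the central limit theorem follow directly from the general graph theorems of the paper, applied with length $\ell$ and cost $\varphi$. The abstract statements identify $\Lambda$ as the integral of $\varphi$ against the equilibrium state $\mu_{-\delta\ell}$ associated to the critical exponent $\delta>0$ of the saddle connection path count, and $\sigma^2$ as the corresponding asymptotic variance. Parts (i)--(iii) then drop out of this identification. For (i), a non-negative and non-trivial $\varphi$ has strictly positive integral against the fully supported equilibrium state, giving $\Lambda>0$. Part (ii) is the standard Livsic-cohomology dichotomy: $\sigma^2=0$ is equivalent to $\varphi - \tau\ell$ being a coboundary for some $\tau \in \R$, and evaluating a coboundary on closed orbits of the shift---which correspond exactly to closed saddle connection paths---yields $\varphi(p)=\tau\ell(p)$ for every such $p$; the reverse implication is immediate. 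For (iii), irreducibility of $\G_X$ guarantees that the critical exponent, the equilibrium state, and hence both $\Lambda$ and $\sigma^2$ do not depend on the choice of root vertex, i.e.\ on $x \in \mathcal{X}$.

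The main obstacle will be verifying that $\G_X$ satisfies the standing hypotheses of the abstract framework. One has to check that the strongly connected component containing the root is irreducible and aperiodic, and that the transfer operator $\mathcal{L}_{-s\ell}$ has sufficiently good spectral behaviour for $s$ in a neighbourhood of $\delta$ (in particular, a spectral gap and analytic perturbation theory in an auxiliary twist parameter conjugate to $\varphi$, enabling the CLT). Irreducibility is geometric: any two singularities can be joined by a local-geodesic concatenation of saddle connections, and outgoing directions can always be chosen to satisfy the angle constraint. The spectral/tail properties are the key analytic input and rest on the known exponential growth asymptotics for saddle connection path counts referenced in Section~\ref{sec.introts}. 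Once these verifications are in hand, the rest of the argument---including items (i)--(iii)---is a formal application of the general framework together with routine thermodynamic formalism.
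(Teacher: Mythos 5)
Your top-level strategy -- encode saddle connection paths as paths in a countable directed graph whose edges are oriented saddle connections (with the angle-admissibility condition) and then quote the abstract graph theorems -- is exactly the paper's route (Section \ref{sec.mainresults}). The genuine gap is in what you propose to verify. The graph theorems require the hypotheses $(\text{G}1)$--$(\text{G}3)$, i.e.\ their translation-surface analogues $(\text{T}1)$--$(\text{T}3)$, and your proposal checks none of them in the form actually needed. For $(\text{T}2)$ you only argue qualitative irreducibility (``any two singularities can be joined''), but the framework needs the quantitative chaining bound: for any saddle connections $s,s'$ there is a connecting saddle connection path of length at most $C+\ell(s)+\ell(s')$ with a uniform $C$; this is the nontrivial geometric input (the paper cites \cite[Proposition 3.11]{dankwart}) and it is what makes the Hofbauer--Keller finite truncation $W_{s,t}$ irreducible for large $k$. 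More seriously, you never address $(\text{T}3)$, the non-lattice condition that closed-path lengths are not contained in $d\mathbb{N}$ (``aperiodicity'' of the graph is not the same thing); without it the Tauberian/contour analysis underlying both the large deviation bound and the CLT breaks down, and in the paper this condition is a genuinely substantive point, proved via rational independence of lengths in an embedded cylinder (Proposition \ref{prop.irrat}, an elliptic-curve argument). The summability $(\text{T}1)$, via Masur's quadratic growth of saddle connections, is also omitted.

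In place of these verifications you propose to establish a spectral gap for a transfer operator $\mathcal{L}_{-s\ell}$ on the infinite system and to identify $\Lambda$ and $\sigma^2$ through an equilibrium state $\mu_{-\delta\ell}$. That is not how the framework works, and it is precisely what is unavailable here: the paper emphasizes that Dolgopyat-type/uniform spectral estimates fail and that the relevant Poincar\'e series has poles accumulating on the critical line, which is why the abstract results are proved instead through the determinant of the finite truncated matrix $W_{s,t}$, the Delange Tauberian theorem, a local G\"artner--Ellis lemma, and the method of moments. Correspondingly, $\Lambda$ and $\sigma^2$ are obtained from first and second derivatives of the leading eigenvalue $\lambda(s,t)$ of $W_{s,t}$, not as integrals against an equilibrium state (whose existence for this countable-state system with unbounded lengths you would have to prove separately). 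Items (i)--(iii) then come directly packaged with the graph results (Theorem \ref{thm.average} for positivity of $\Lambda$, the non-degeneracy clause of Theorem \ref{thm.mclt} for the closed-path characterisation of $\sigma^2=0$, and Proposition \ref{prop.independent} for base-point independence), so your detour through equilibrium states and an infinite-state Livsic theorem is both unsupported and unnecessary. As written, the proposal would stall exactly at the steps the paper identifies as the real work: $(\text{T}2)$ and $(\text{T}3)$.
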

In Section \ref{sec.variance} we provide a general condition that guarantees positive variance in Theorem \ref{thm.general}.
\begin{remark}
The mean $\Lambda$ and variance $\sigma^2$ in the above theorem can be obtained from the asymptotic expressions
\[
\Lambda = \lim_{T\to\infty} \frac{1}{\#\P_x(T)} \sum_{p \in \P_x(T)} \frac{\varphi(p)}{T} \ \ \text{ and } \ \ \sigma^2 = \lim_{T\to\infty} \frac{1}{\#\P_x(T)} \sum_{p \in \P_x(T)} \frac{(\varphi(p) -\Lambda T)^2}{T} \ge 0.
\]
\end{remark}
 We also obtain a counting large deviation theorems and non-degenerate central limit theorems comparing the geometric length with the quantities listed as $(1)$ and $(2)$ above, see Section \ref{sec.mainresults}. We also provide a general condition that guarantees that the variance of for the central limit theorem is strictly positive. 

Theorem \ref{thm.general} provides a precise statistical comparison between the geometric length and a saddle cost function.
It is natural to ask whether we can compare multiple saddle cost functions at a time so that we can study the distribution of the holonomy vector (and other examples). This leads us to a prove a multi-dimensional version of this theorem that compares the geometric length with a collection of saddle cost functions. 
\begin{theorem}\label{thm.generalmulti}
Suppose that $\overline{\varphi} : \P_x \to \mathbb{R}^k$ is a vector of $k$ saddle cost functions. Then, there exist $\overline{\Lambda} \in \R^k$ such that for any $\epsilon > 0$
\[
\limsup_{T\to\infty} \frac{1}{T} \log \left(\mu_T\left\{ p \in \P_x : \left\| \frac{\overline{\varphi}(p)}{T} - \overline{\Lambda} \right\| > \epsilon \right\} \right) < 0
\]
where $\| \cdot\|$ is any fixed norm on $\R^k$.
Furthermore, there is a positive definite matrix $\Sigma \in \GL_k(\R)$ such that the distribution of
\[
 \frac{\overline{\varphi}(p) - \overline{\Lambda} T}{\sqrt{T}} \ \ \text{ with respect to $\mu_T$ }
\]
converges as $T\to\infty$ to the multi-dimensional Gaussian distribution with mean $0$ and covariance matrix $\Sigma$.
Furthermore $\Sigma$ is strictly positive definite if and only if there does not exist non-zero  $t \in \R^k$ such that for all closed saddle connection paths $p$, 
\[
\langle t, \overline{\varphi}(p) - \overline{\Lambda} \ell(p)\rangle = 0.
\]
\end{theorem}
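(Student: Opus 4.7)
The plan is to deduce Theorem \ref{thm.generalmulti} from the scalar Theorem \ref{thm.general} by applying the latter to every linear combination $\varphi_t := \langle t, \overline{\varphi}\rangle$, $t \in \R^k$, and assembling the one-dimensional conclusions via a union bound for the large deviation estimate and the Cram\'er--Wold device for the central limit theorem. The key preliminary is that each $\varphi_t$ is itself a saddle cost function: if $|\varphi_j(s)| \le C_j \ell(s)$ for each coordinate $\varphi_j$ of $\overline{\varphi}$, then $|\varphi_t(s)| \le \bigl(\sum_j |t_j|C_j\bigr)\ell(s)$. Writing $\Lambda(t)$ and $\sigma^2(t)$ for the scalar mean and variance produced by Theorem \ref{thm.general} applied to $\varphi_t$, the asymptotic formulas in the remark following that theorem show that $\Lambda$ is linear in $t$, with $\Lambda(t) = \langle t, \overline{\Lambda}\rangle$ for $\overline{\Lambda}_j := \Lambda(e_j)$, and (by polarisation applied to the scalar variance formula) that the symmetric positive semi-definite empirical covariance matrices
\[
\Sigma_T := \frac{1}{T \cdot \#\P_x(T)} \sum_{p \in \P_x(T)} \bigl(\overline{\varphi}(p) - \overline{\Lambda} T\bigr)\bigl(\overline{\varphi}(p) - \overline{\Lambda} T\bigr)^\top
\]
converge entrywise to a symmetric positive semi-definite matrix $\Sigma$ with $\sigma^2(t) = \langle t, \Sigma t\rangle$.

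The large deviation statement then follows by a union bound. All norms on $\R^k$ are equivalent, so it suffices to take $\|\cdot\| = \|\cdot\|_\infty$; the event $\|\overline{\varphi}(p)/T - \overline{\Lambda}\|_\infty > \epsilon$ forces $|\varphi_j(p)/T - \Lambda_j| > \epsilon$ for some coordinate $j$, and each such one-dimensional event has a negative exponential rate by Theorem \ref{thm.general}. For the central limit theorem, the scalar CLT in Theorem \ref{thm.general} applied to $\varphi_t$ yields
\[
\left\langle t,\, \frac{\overline{\varphi}(p) - \overline{\Lambda}T}{\sqrt{T}}\right\rangle \xrightarrow{d} N\bigl(0,\, \langle t, \Sigma t\rangle\bigr)
\]
under $\mu_T$ for every $t \in \R^k$, and the Cram\'er--Wold device upgrades this to the joint convergence $(\overline{\varphi}(p) - \overline{\Lambda}T)/\sqrt{T} \xrightarrow{d} N(0, \Sigma)$.

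For the non-degeneracy characterisation, $\Sigma$ is strictly positive definite iff $\sigma^2(t) > 0$ for every $t \neq 0$. By Theorem \ref{thm.general}(ii) applied to $\varphi_t$, degeneracy in direction $t$ is equivalent to the existence of a scalar $\tau \in \R$ with $\varphi_t(p) = \tau \ell(p)$ on every closed saddle connection path. The Liv\v{s}ic-type cohomological argument used in the proof of Theorem \ref{thm.general}(ii) --- applied to $\varphi_t - \tau\ell$, which vanishes on closed paths and is therefore a coboundary of zero mean in the underlying symbolic model --- pins down $\tau = \Lambda(t) = \langle t, \overline{\Lambda}\rangle$, so the degeneracy condition becomes $\langle t, \overline{\varphi}(p) - \overline{\Lambda}\ell(p)\rangle = 0$ on closed paths, as required. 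The converse implication is immediate from the scalar statement.

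I expect the main technical point to be the identification $\tau = \langle t, \overline{\Lambda}\rangle$ in the non-degeneracy step: this is where the reduction genuinely relies on the cohomological (Liv\v{s}ic-type) content of the symbolic dynamics underlying Theorem \ref{thm.general}, rather than merely on its stated conclusions. The remaining ingredients --- linearity of $\overline{\Lambda}$, the bilinear structure of $\Sigma$ via polarisation, the union bound, and the Cram\'er--Wold device --- are standard probabilistic consequences of the scalar theorem.
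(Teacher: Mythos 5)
Your route is genuinely different from the paper's. The paper does not deduce Theorem \ref{thm.generalmulti} from the scalar Theorem \ref{thm.general}: both are read off from the graph results of Section \ref{sec.graph}, where the vector parameter $t$ is carried through the spectral analysis from the start --- the large deviation bound comes from the perturbed eigenvalue $\lambda(s,t)$, the CLT from a multivariate method of moments (Proposition \ref{prop.analyticity}, via Fa\`a di Bruno), and non-degeneracy from Lemma \ref{lem.secondderiv}, which works with the reparameterised matrices $\widetilde{W}_{s,t}=W_{s+\langle t,\Lambda(\overline{c})\rangle,t}$, so the quadratic form $\langle v,\Sigma v\rangle$ appears as a second derivative of an eigenvalue that is already centred at $\Lambda(\overline{c})$, and the finite-state Livšic theorem is applied on large finite subgraphs. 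Your reduction (linear functionals $\varphi_t$, union bound in the sup-norm, polarisation of the variance formula, Cram\'er--Wold) is sound and is arguably cleaner for the first two assertions, provided you treat the asymptotic formulas for $\Lambda$ and $\sigma^2$ in the remark after Theorem \ref{thm.general} as part of the scalar package. One technical point you should still record: Cram\'er--Wold requires convergence in \emph{every} direction, including directions with $\sigma^2(t)=0$, where the scalar CLT as stated (with the Gaussian density) is ill-defined; there the variance formula plus Chebyshev gives convergence to the point mass at $0$, which is what the device needs.

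The genuine gap is the one you yourself flag: in the non-degeneracy ``only if'' direction, Theorem \ref{thm.general}(ii) only produces \emph{some} $\tau$ with $\varphi_t(p)=\tau\ell(p)$ on closed paths, and the identification $\tau=\langle t,\overline{\Lambda}\rangle$ is not a consequence of the scalar statement. Your sketch (``vanishes on closed paths, hence a coboundary of zero mean'') elides the real work twice over: a Livšic-type statement for the countable-state model is not off the shelf, and even granting it, ``zero mean'' refers to averages against invariant measures of the symbolic model, whereas $\Lambda(t)$ is a \emph{counting} average over $\P_x(T)$, whose paths are not closed; relating the two is precisely what the spectral formula $\Lambda=-\lambda_t(1,0)/\lambda_s(1,0)$ together with the finite-subgraph approximation (Lemmas \ref{lem.var} and \ref{lem.secondderiv}) supplies. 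An alternative patch --- showing that closed paths based at $x$ of length less than $T$ have at most sub-exponentially small $\mu_T$-proportion, so that the LDP for $\varphi_t-\tau\ell$ forces $\Lambda(t)-\tau=0$ --- likewise requires a residue computation for the Poincar\'e series restricted to closed paths, not just the stated conclusions. So either way you must open the machinery behind Theorem \ref{thm.general}; the paper sidesteps the issue entirely because its reparameterisation centres the perturbation at $\Lambda(\overline{c})$ before any Livšic argument, so no undetermined $\tau$ ever appears. Filling this step (most economically by quoting the graph-level Lemma \ref{lem.secondderiv} rather than the scalar theorem) would complete your argument.
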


We now discuss applications of this result. Given a saddle connection path $p = (s_{i_1}, \ldots, s_{i_n})$ we write
\[
\int_p \omega = \sum_{i=1}^n \int_{s_i} \omega
\]
for the sum of the holonomy vectors along the saddle connections making up $p$. We think of $\int_p \omega$ as being in $\R^2$ through the natural identification of $\C$ with $\R^2$. We prove the following.
\begin{corollary}\label{cor.holonomy}
Let $(X,\omega)$ be a translation surface and fix a singularity $x \in\mathcal{X}$. Then for any $\epsilon > 0$ we have that
\[
\limsup_{T\to\infty} \frac{1}{T} \log \left( \mu_T\left\{ p \in \P_x : \left\| \frac{1}{T} \int_p \omega  \right\| > \epsilon \right\} \right) < 0
\]
where $\|\cdot\|$ is any norm on $\R^2$.
Furthermore, there exists a strictly positive definite symmetric matrix $\Sigma \in \GL_2(\R)$ such that, as $T\to\infty$, the distribution of the vectors
\[
\frac{1}{\sqrt{T}} \int_p \omega \in \R^2 \ \ \text{ with respect to $\mu_T$}
\]
converges to the 2-dimensional Gaussian distribution with mean $0$ and covariance matrix $\Sigma$.
\end{corollary}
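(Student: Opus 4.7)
The plan is to apply Theorem~\ref{thm.generalmulti} to the two-component vector saddle cost function $\overline{\varphi}(p) := (|p|_{\mathfrak{R}}, |p|_{\mathfrak{I}}) = \int_p \omega \in \R^2$; each component is indeed a saddle cost function because for a single saddle connection $s$ one has $|\mathfrak{R}_s|, |\mathfrak{I}_s| \le \ell(s)$. The theorem then produces a mean vector $\overline{\Lambda} \in \R^2$, the large deviation upper bound for $T^{-1}\int_p\omega$ around $\overline{\Lambda}$, and a centred Gaussian limit for $T^{-1/2}\bigl(\int_p\omega - \overline{\Lambda} T\bigr)$ with covariance $\Sigma \in \GL_2(\R)$. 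The corollary follows once we verify that (a) $\overline{\Lambda} = 0$ and (b) $\Sigma$ is strictly positive definite.

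For (a), I would exploit the orientation-reversing involution on saddle connection paths. Given $p = (s_{i_1}, \ldots, s_{i_n}) \in \P_x$ running from $x$ to $y$, the reversed path $p^* := (\bar{s}_{i_n}, \ldots, \bar{s}_{i_1})$ is again a local geodesic (the turning-angle condition at each interior singularity is symmetric under time reversal), it lies in $\P_y$, and it satisfies $\ell(p^*) = \ell(p)$ and $\int_{p^*}\omega = -\int_p\omega$. Hence $p \mapsto p^*$ is a length-preserving involution of $\bigsqcup_{x\in\mathcal{X}} \P_x(T)$ that flips the sign of $\int \omega$, so
\[
\sum_{x\in\mathcal{X}}\,\sum_{p\in\P_x(T)} \int_p \omega \;=\; 0 \qquad\text{for every } T>0.
\]
Applying the remark following Theorem~\ref{thm.general} componentwise, together with the independence of $\overline{\Lambda}$ on the base singularity from item (iii), the summed identity gives $T\,\overline{\Lambda}\sum_{x}\#\P_x(T)\,(1 + o(1)) = 0$ as $T\to\infty$, forcing $\overline{\Lambda} = 0$. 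The large deviations assertion of the corollary is then immediate from Theorem~\ref{thm.generalmulti}.

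For (b), by the criterion in Theorem~\ref{thm.generalmulti} combined with $\overline{\Lambda}=0$, it suffices to show that the set of holonomy vectors $\{\int_p\omega : p\text{ a closed saddle connection path}\}$ spans $\R^2$. Any element of $H_1(X,\Z)$ is represented by a closed curve on $X$, which can be homotoped to a concatenation of saddle connections that is a local geodesic at each interior singularity; the resulting representative is, up to decomposition at the base singularities, a union of closed saddle connection paths, and the holonomy is a homology invariant. Since $\omega$ is a non-trivial holomorphic $1$-form on a compact Riemann surface of genus at least one, the period map $H_1(X,\Z) \to \C = \R^2$ has $\R$-linear image equal to all of $\R^2$, so the closed-path holonomies cannot lie in any proper linear subspace of $\R^2$.

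The main obstacle I expect is this last homological input: making precise the claim that every homology class can be realised by a combination of closed saddle connection paths in the exact sense used by this paper (local geodesics turning at each intermediate singularity). The standard geodesic-straightening argument on translation surfaces handles this, but it is the only place where the translation-surface geometry enters beyond the black-box use of Theorem~\ref{thm.generalmulti} and the orientation-reversing symmetry above.
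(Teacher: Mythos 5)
Your proposal is correct and follows the paper's overall scheme (apply Theorem \ref{thm.generalmulti} to $(|p|_{\mathfrak{R}},|p|_{\mathfrak{I}})$, kill the mean by orientation reversal, then check the non-degeneracy criterion), but the non-degeneracy step is argued by a genuinely different route. The paper argues by contradiction: if $\langle v,\int_p\omega\rangle=0$ for every closed saddle connection path, then closing up an arbitrary saddle connection $s$ via (T2) forces $|\langle v,\int_s\omega\rangle|$ to be uniformly bounded over all $s$, which is impossible because holonomy directions are dense in the circle and saddle lengths are unbounded; this stays entirely inside the tools already in play. You instead realise homology classes by closed saddle connection paths and use that the $\R$-span of the periods of a non-trivial holomorphic one-form over $H_1(X,\Z)$ is all of $\R^2$ (a nonzero real combination of $\mathrm{Re}\,\omega$ and $\mathrm{Im}\,\omega$ with vanishing absolute periods would be an exact harmonic form, hence zero). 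The extra input you flag is exactly what your route needs and the paper's avoids: that every based homotopy class at a singularity has a local-geodesic representative which is a concatenation of saddle connections. This does hold (the flat metric with cone angles $\geq 4\pi$ is locally CAT(0), so the rel-endpoints geodesic representative of a loop based at a singularity exists and is a closed saddle connection path; cylinder-type classes can alternatively be routed along cylinder boundaries), so your argument is sound, but it imports non-positive-curvature straightening plus a small amount of Hodge theory where the paper uses only (T2) and density of holonomy directions. On the mean, your treatment is in fact slightly more careful than the paper's: the reversed path of $p\in\P_x$ lies in $\P_y$ for the terminal singularity $y$, so the paper's claim that $p'\in\P_x(T)$ is literally valid only for paths returning to $x$; your summation over all base singularities combined with base-point independence of $\overline{\Lambda}$ (Theorem \ref{thm.general}(iii)) repairs this cleanly.
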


As another application, take a subset $Y = \{ y_1,\ldots, y_k \} \subset \mathcal{X}$ of singularities of $X$. Let $|\cdot|_Y: \P_x \to \R^k$ denote the vector
\[
|p|_Y = (|p|_{y_1}, \ldots, |p|_{y_k})
\]
where each $|p|_{y_j}$ denotes the number of times $p$ visits the singularity $y_j$. Clearly each $|\cdot|_{y_j}$ is a saddle cost function and so we can apply Theorem \ref{thm.generalmulti} to obtain a multi-dimensional large deviation and central limit theorems. Furthermore the central limit theorem is non-degenerate.
\begin{corollary}\label{cor.sing}
Let $(X,\omega)$ be a translation surface and fix a singularity $x \in\mathcal{X}$. 
There exist a  strictly positive vector $\Lambda(Y) \in \R^k_{>0}$ and a strictly positive definite matrix $\Sigma \in \GL_k(\R)$ such that, as $T\to\infty$, the distribution of
\[
 \frac{|p|_Y - \Lambda(Y) T}{\sqrt{T}} \ \ \text{ with respect to $\mu_T$ }
\]
converges to the $k$-dimensional Gaussian distribution with mean $0$ and covariance matrix $\Sigma$.
\end{corollary}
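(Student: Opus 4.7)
My plan is to apply Theorem \ref{thm.generalmulti} to the vector $\overline{\varphi}=(|\cdot|_{y_1},\ldots,|\cdot|_{y_k})$ of saddle cost functions. The large deviation bound and the convergence to a multivariate Gaussian are then immediate, so the work is to verify (i) that each $\Lambda_j(Y)$ is strictly positive and (ii) that the covariance matrix $\Sigma$ is strictly positive definite. Part (i) is a direct corollary of Theorem \ref{thm.general}(i) applied coordinatewise: each $|\cdot|_{y_j}$ is a non-negative saddle cost function that is plainly not identically zero.

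For (ii) I invoke the non-degeneracy criterion in the second half of Theorem \ref{thm.generalmulti}: it suffices to show that for every non-zero $t=(t_1,\ldots,t_k)\in\R^k$ there is some closed saddle connection path $p$ with $\sum_j t_j|p|_{y_j}\neq c\,\ell(p)$, where $c=\langle t,\Lambda(Y)\rangle$. Arguing by contradiction, suppose $\sum_j t_j|p|_{y_j}=c\,\ell(p)$ for every closed $p$. The approach is a Livšic-type rigidity argument on the symbolic coding of saddle connection paths that drives the proofs of the preceding theorems: vanishing of the periodic integrals of the saddle cost $\psi(s):=\sum_j t_j\varphi_j(s)-c\,\ell(s)$ should translate into a coboundary representation in terms of a bounded function on the singularities, schematically
\[
\psi(s) \;=\; u(b(s))-u(a(s))\qquad(s\in\mathcal{S}),
\]
where $a(s),b(s)$ denote the endpoints of $s$ and $\varphi_j$ is the natural saddle cost representative of $|\cdot|_{y_j}$ (for instance $\varphi_j(s)=[a(s)=y_j]$, which differs from $|\cdot|_{y_j}$ only by a uniformly bounded boundary term).

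Once this is in hand, the finish is short. The right-hand side is bounded because $\mathcal{X}$ is finite, and $\sum_j t_j\varphi_j(s)$ is bounded on $\mathcal{S}$, whereas the lengths $\ell(s)$ are unbounded on $(X,\omega)$; this forces $c=0$. With $c=0$ the coboundary equation reduces to $\sum_j t_j\varphi_j(s)=u(b(s))-u(a(s))$, whose left-hand side depends only on the starting endpoint $a(s)$. Since the saddle connection graph on $\mathcal{X}$ is connected ($X$ is connected), $u$ must be constant, so the identity collapses to $\sum_j t_j\varphi_j(s)=0$ for every $s\in\mathcal{S}$; specialising to $s$ with $a(s)=y_{j_0}$ extracts $t_{j_0}=0$ for each $j_0$, contradicting $t\neq 0$. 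The main obstacle that I anticipate is the Livšic step itself: one must confirm that the coding underlying Theorems \ref{thm.general} and \ref{thm.generalmulti} is mixing (or at least irreducible) enough for the classical cohomological rigidity to apply, and that the resulting coboundary can be arranged to descend from the state space of the coding to a bounded function on the finite set $\mathcal{X}$.
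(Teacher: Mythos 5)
Your reduction to Theorem \ref{thm.generalmulti} and the coordinatewise positivity of $\Lambda(Y)$ are fine and match the paper. The genuine gap is exactly the step you flag as the ``main obstacle'': the coboundary representation $\psi(s)=u(b(s))-u(a(s))$ with $u$ a bounded function on the finite set $\mathcal{X}$ is not something the vanishing of periodic sums gives you. The symbolic model behind Theorems \ref{thm.general} and \ref{thm.generalmulti} is a countable-state graph whose states are the saddle connections themselves, not the singularities; a Liv\v{s}ic-type theorem there (which would itself need a countable-alphabet version with suitable transitivity/regularity hypotheses, none of which is developed in the paper) would at best produce a transfer function on the shift space or on $\mathcal{S}$, with no reason to be bounded and no reason to factor through the endpoint map $s\mapsto(a(s),b(s))$. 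Since your derivation of $c=0$ and of the constancy of $u$ both rest on that unproved descent, the non-degeneracy argument as written does not close. (A smaller point: swapping $|\cdot|_{y_j}$ for the indicator cost $[a(s)=y_j]$ changes the exact identity $\langle t,|p|_Y\rangle=c\,\ell(p)$ by a bounded boundary term, which you should kill by iterating closed paths $p^n$ before arguing further.)

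The machinery is also unnecessary: the paper verifies the criterion of Theorem \ref{thm.generalmulti} directly. If $\langle t,|p|_Y\rangle=\langle t,\Lambda(Y)\rangle\,\ell(p)$ for all closed saddle connection paths and $t_j\neq 0$, test the identity on closed paths consisting of a single saddle connection $s$ from $y_j$ to itself: it gives $2t_j=\langle t,\Lambda(Y)\rangle\,\ell(s)$ for every such $s$. Since there are infinitely many saddle connection loops at $y_j$ with distinct (unbounded) lengths, this forces $\langle t,\Lambda(Y)\rangle=0$ and hence $t_j=0$, a contradiction. If you want to salvage your cohomological route you would need to prove the Liv\v{s}ic statement for the countable-state coding and then justify the descent to $\mathcal{X}$, which is substantially more work than the two-line direct check.
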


We now discuss our approach to proving the above results.
\subsection{Method and ideas behind the proof}
We first establish counting limit laws for infinite graphs (that satisfy some natural assumptions which we label $(\text{G}1), (\text{G}2), (\text{G}3)$). That is, we compare length functions on infinite graphs and prove that they satisfy a counting central limit theorem and large deviation theorem.
This is done in Section \ref{sec.intrograph} and we now briefly discuss how we prove these results.

In previous works, counting limit laws have often been established using Hwang's Quasi-power Theorem \cite{hwang}: a result that translates knowledge about sequences of moment generating functions into counting limit laws. We briefly illustrate how this argument might work in the setting of Theorem \ref{thm.general}. Suppose that $(X,\omega)$ is a translation surface and $\varphi$ a saddle cost function as above. Hwang's Theorem states that if we have a so-called `quasi-power expression'
\begin{equation}\label{eq.mgf}
\sum_{\ell(p) <T} e^{s \varphi(p)} = Ce^{\sigma(s)} (1 + O(k_T)^{-1}) \ \text{ uniformly for $s$ in a complex neighbourhood of $0$}
\end{equation}
where $\sigma$ is analytic, and $k_T \to \infty$ as $T\to\infty$, then we have counting limit laws comparing $\varphi$ and $\ell$. Here $\sum_{\ell(p) <T} e^{s \varphi(p)}$ is the sequence of moment generating functions. 
 In fact, Hwang's theorem implies limit laws with precise error terms (depending on $\sigma$ and $k_T$). 

There are many works that deduce counting limit laws following this line of argument: see \cite{BV}, \cite{Jungwon1}, \cite{CV}, \cite{cantrell.pollicott}, \cite{Jungwon2}. To obtain the required quasi-power expression one can borrow techniques from analytic number theory: by considering the appropriate  $2$-variable complex function (a Zeta function or Poincar\'e series), one can express the sequence of moment generating functions as a sequence of contour integrals. One can then evaluate/estimate these integrals to obtain the quasi-power expression. To obtain the uniform error in the quasi-power expression, one needs to know that the corresponding complex function admits a uniform analytic extension, except for a pole, past its critical line of convergence in $\C$. To prove this uniform convergence one needs to have strong estimate on the operator norms for a certain family of linear operators called transfer operators. These estimates, usually referred to as Dolgopyat estimates are delicate and are only known to hold in a handful of settings. All of the works mentioned above rely on these estimates.

Unfortunately, these Dolgopyat estimates do not hold in our setting of infinite graphs or translation surfaces. Indeed, the Poincar\'e series
\[
\sum_{p \in \P_x(T)} e^{-s\ell(p)} \ \ \text{ for $s \in \C$}
\]
and its two variable analogue that encodes the moment generating functions in $(\ref{eq.mgf})$ has poles arbitrarily close to the critical line $\mathfrak{Re}(s) =h$ (where $h$ is the exponential growth rate of $\#\P_x(T)$). We therefore need to develop a Dolgopyat estimate free approach to obtain our counting limit theorems. Our proof is slightly different for the large deviations and the central limit theorem. For the large deviation theorem we show that it suffices to prove a weak version of the quasi-power expression. This result appears as Proposition \ref{prop.wqp} below. To prove our central limit theorem, Theorem \ref{thm.mclt}, we generalise an argument due to Hwang and Janson \cite{hwang.janson}. 
This generalisation is both to multidimensional settings and also to settings in which we do not have good estimates on transfer operators.

Our proofs also rely on a key observation of Hofbauer and Keller \cite{HK} that was used by the second author and Colognese in \cite{Pollicott-Colognese} which shows that the infinite graphs we consider can, in some sense, be approximated by finite subgraphs. This intuition is formalised in expression $(\ref{eq.key})$ below which shows that proving the invertiblity of a certain infinite matrix (which encodes the infinite graph $\G$) can be reduced to proving invertibility of a finite matrix. We also need to show that the variance of our central limit theorem for infinite graphs can be (in some sense) approximated by the variances of central limit theorems on subgraphs.

Once we have proven our results in Section \ref{sec.intrograph}, we use them to deduce our result for translation surfaces. To do so, we show that it is possible to translate the assumptions $(\text{G1})$, $(\text{G2})$ and $(\text{G3})$ for graphs into analogous conditions for saddle connection paths. We verify that these conditions hold and also verify the non-degeneracy criteria for the central limit theorem. These proofs appear in Section \ref{sec.translation} in which we also provide explicit examples and applications.

To summarise, the outline of our proof is as follows:
\begin{enumerate}
\item We start by studying infinite graphs and reduce the infinite graph case to finite subgraphs;
\item we then prove our counting limit laws for graphs without relying on Dolgopyat estimates; and,
\item lastly, we prove that the results from the previous step can be applied to translation surfaces.
\end{enumerate}

\subsection*{Acknowledgements}
The authors are grateful to Jon Chaika and Selim Ghazouani for helpful discussions and comments. We are also grateful to Peter M\"uller for assisting us in proving Proposition \ref{prop.irrat}. MP's research supported by ERC grant 833802-resonances and EPSRC grant EP/T001674/1.
\section{Infinite directed graphs}\label{sec.intrograph}
Let $\G$ be a finite directed graph in which the edges $e$ in $\G$ are labelled with lengths $\ell(e)$. We can run a Markovian random walk on $\G$ and ask, for a typical path (with respect to the stationary distribution) consisting of $n$ edges, how long is the $\ell$ length of the path? This is a classical problem are there are many beautiful works that consider this and related 
questions, see \cite{Bougerol2}, \cite{Bougerol1}, \cite{Page}, \cite{Guivarch}. In these works randomness is introduced through the stationary distribution for the Markovian process. One could also consider deterministic limit laws in which the randomness is replaced by counting.  That is, we can consider all paths $P_n$ in $\G$ consisting of $n$ edges and ask, for uniformly selected $p \in P_n$, what do we expect the value of $\ell(p)$ to be? This question is well understood in the finite graph setting but such results are much less developed in the infinite setting. Indeed, in the finite setting there is a well developed approach for these problems that employs thermodynamic formalism and symbolic dynamics. In particular transfer operator techniques can be employed. Similar counting limit laws have been proved for some well-understood infinite dynamical systems \cite{BV}, \cite{CV}. As we discussed at the end of the introduction, the proof of these results rely on strong Dolgopyat estimates for transfer operators. The estimates do not hold in the current setting of infinite graphs.\\

Suppose we have a directed graph $\mathcal{G}$ with countable vertex set $V$ and a countable edges set $\mathcal{E}$. Label each edge $e \in \E$ with a length $\ell(e)$. We  need the following assumptions.\\

\noindent \textbf{Assumptions:}
\begin{enumerate}
\item[(G1)] for all $\sigma > 0$ we have that $\sum_{e \in \mathcal{E}} e^{-\sigma \ell(e)} < \infty$;
\item[(G2)] there exists a constant $C >0$ such that for each $e,e' \in \mathcal{E}$ there is a path of length less than $C + \ell(e) + \ell(e')$ in $\mathcal{G}$ which starts with $e$ and ends with $e'$;
\item[(G3)] there does not exist $d > 0$ such that $\{ \ell(p) : \text{$p$ is a closed path in $\mathcal{G}$}\} \subset d\mathbb{N}$.
\end{enumerate}

\begin{remark}
Intuitively these assumptions allow us to, in some sense, approximate the infinite graph $\G$ by finite subgraphs.
\end{remark}

Here $\ell(p)$ is the natural extension of $\ell$ from edges to paths: the length of a path is the sum of the corresponding edge lengths. Write $\mathcal{P}_v$ for the set of all closed paths starting with a fixed vertex $v$ and let $\mathcal{P}_v(T)$ denote the collection of closed paths of length at most $T$ in $\mathcal{P}_v$, i.e. $p \in \P_v$ with  $\ell(p) <T$.

\begin{definition}
A \textit{cost} on the edges is a  non-zero function $c : \mathcal{E} \to \R$ such that there exists a constant $C >0$ such that $|c(e)| \le 
C \ell(e)$ for all $e \in \E$.
\end{definition}
We can extend these costs to paths in the obvious way: the cost of a path $p$ is the sum of the weightings along the edges of the path. We denote this length by $c(p)$. 

\begin{example}
(1) One could take the cost function $c : \E \to \R$ that assigns each edge $1$. Then the cost of a path is the number of edges in the path.\\
(2) Fix a vertex $v \in \G$.  We can define a cost function $c : \E \to \R$ that assigns an edge $1$ if the edge ends at vertex $v$ and $0$ otherwise. The cost of a path $p$ is then equal to the number of times that the path visits the $v$ vertex.\\
(3) In general we could take the cost function $c : \E \to \R$ that assigns positive lengths to each edge to compare two different length labelings on $\G$.
\end{example}

We would like to form a statistical comparison between the lengths the costs of paths in $\G$.
We prove the following statistical results. In the following fix $v \in V$ and let $\mu_T$ denote the uniform counting probability measure on $\P_v(T) \subset \P_v$: for $E \subset \P_v$ 
\[
\mu_T(E) = \frac{1}{\#\P_v(T)} \#\{ p\in E \cap \P_v(T)\}.
\]

\begin{theorem}\label{thm.average}
Suppose $\mathcal{G}$ is a directed graph satisfying  assumptions $(\textnormal{G}1) - (\textnormal{G}3)$. Let $c : \E \to \R$ be a cost on the edges of $\G$.  Then there exists a constant $\Lambda(c) \in \R $ such that
\[
\frac{1}{T} \int_{\P_v} c(p) \ d\mu_T = \frac{1}{\#\mathcal{P}_v(T)} \sum_{\ell(p) < T} \frac{c(p)}{T} \to  \Lambda(c)
\]
as $T\to\infty$. If $c$ is positive ($c(p) \ge 0$ for all $p$) and is not identically $0$ then $\Lambda(c) > 0$.
\end{theorem}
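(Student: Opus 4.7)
The plan is to extract the mean $\Lambda(c)$ from the leading behaviour of a two-variable Poincar\'e series. Define
\[
\eta(s,z) = \sum_{p \in \P_v} e^{-s\ell(p) + z c(p)},
\]
which converges absolutely for $\Re(s)$ large and $z$ in a complex neighbourhood of $0$, thanks to (G1) and the bound $|c(e)| \le C\ell(e)$. Taking $z = 0$ recovers the ordinary Poincar\'e series for $\G$, whose abscissa of convergence $h > 0$ controls the exponential growth rate of $\#\P_v(T)$. The guiding heuristic is that $\Lambda(c)$ should equal $h'(0)$, where $h(z)$ denotes the abscissa of convergence of $\eta(\cdot, z)$.

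To access $h(z)$, I would decompose closed paths at $v$ into concatenations of first-return loops. Let
\[
N(s,z) = \sum_{p \text{ first return to } v} e^{-s\ell(p) + z c(p)},
\]
so that $\eta(s,z) = (1 - N(s,z))^{-1}$ up to harmless boundary contributions. Condition (G2) guarantees the existence of plentifully many first-return loops through $v$, so $N(s, 0)$ is non-trivial and the equation $N(s, 0) = 1$ is solved at the critical point $s = h$ on the real axis. The Hofbauer--Keller intuition emphasised in the introduction enters here: one approximates the infinite graph by a nested exhaustion of finite subgraphs, on each of which standard Perron--Frobenius theory gives a simple leading eigenvalue depending analytically on $z$, and passes to a limit. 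Combined with (G3) to rule out oscillatory behaviour on the line $\Re(s) = h$, the implicit function theorem applied near $(h,0)$ produces an analytic $s = h(z)$ with $h(0) = h$ and $N(h(z), z) = 1$.

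A Wiener--Ikehara/Delange-style Tauberian argument then applied to $\eta(\cdot, z)$ for $z$ real and small yields
\[
M_T(z) := \sum_{p \in \P_v(T)} e^{z c(p)} \sim A(z)\, e^{h(z) T}
\]
for a positive analytic function $A(z)$. Differentiating in $z$ at $z = 0$ gives
\[
\sum_{\ell(p) < T} c(p) = M_T'(0) \sim \bigl(A'(0) + h'(0)\, T\, A(0)\bigr)\, e^{hT},
\]
and dividing by $T \cdot \#\P_v(T) \sim T\, A(0)\, e^{hT}$ produces the desired limit with $\Lambda(c) = h'(0)$. For the strict positivity clause, differentiate the identity $N(h(z), z) = 1$ at $z = 0$ to get $h'(0) = -\partial_z N(h,0) / \partial_s N(h,0)$. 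The denominator is strictly negative by monotonicity of $s \mapsto N(s,0)$ along the real axis, while if $c \ge 0$ is not identically zero then (G2) supplies a first-return loop with positive $c$-cost, forcing $\partial_z N(h,0) > 0$ and hence $\Lambda(c) > 0$.

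The main obstacle is the Tauberian extraction step. The textbook route would obtain analytic continuation of $N(\cdot, z)$ past the critical line $\Re(s) = h$ via Dolgopyat-type spectral estimates, but as emphasised in the introduction such estimates are unavailable in the present infinite-graph setting (poles accumulate on the critical line). The workaround is to replace strong analytic continuation by the Hofbauer--Keller finite-subgraph reduction: invertibility of the infinite transition matrix encoding $\G$ is reduced to invertibility of a finite matrix obtained from a suitable cut-off subgraph. This produces only a ``weak quasi-power'' form of information rather than Hwang's full quasi-power expression, but it is just enough to support a one-sided Tauberian extraction of the leading first-moment asymptotic. Making the finite reduction quantitative enough, while avoiding any appeal to Dolgopyat, is the technical heart of the proof.
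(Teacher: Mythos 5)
Your overall strategy (two-variable Poincar\'e series, locate the moving singularity $s=h(z)$, Tauberian extraction, $\Lambda(c)=h'(0)=-\partial_z N/\partial_s N$) is the right shape, and the positivity argument at the end matches the paper's $\Lambda(c)=-\lambda_t(1,0)/\lambda_s(1,0)>0$. But two steps are asserted rather than proved, and they are exactly where the difficulty of the infinite-graph setting lives. First, the renewal decomposition $\eta=(1-N)^{-1}$ only identifies the singularity at $s=h$ if you know that (a) the first-return series $N(\cdot,z)$ converges on, and extends analytically slightly past, the closed half-plane $\Re(s)\ge h(z)$, and (b) $N(h,0)=1$. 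In a countable graph neither is automatic: the abscissa of convergence of $N$ can coincide with $h$, with $\lim_{s\downarrow h}N(s,0)<1$ (transient-type behaviour) or with $\partial_s N$ blowing up (null-recurrent-type behaviour), in which case the implicit function theorem at $(h,0)$ and your Tauberian hypotheses both fail. Establishing the needed ``gap'' under (G1)--(G3) is the technical heart, and your appeal to a nested exhaustion by finite subgraphs does not supply it. The paper handles this differently: it never uses first returns, but writes $I-M_{s,t}$ via the Hofbauer--Keller block identity so that $\eta_\G(s,t)=\phi(s,t)/\det(I-W_{s,t})$ with $W_{s,t}=A_{s,t}+B_{s,t}(I-D_{s,t})^{-1}C_{s,t}$ a \emph{finite} matrix whose entries (and $\phi$) are analytic on $\Re(s)>\epsilon$ for a suitable truncation size $k$ (because $\|D_{s,t}\|<1$ there), so the only singularities on $\Re(s)\ge h$ come from the Perron eigenvalue $\lambda(s,t)$ of $W_{s,t}$ crossing $1$, and (G3) via \cite{Pollicott-Colognese} excludes zeros of $\det(I-W_{s,0})$ elsewhere on the critical line.

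Second, the step ``differentiate $M_T(z)\sim A(z)e^{h(z)T}$ in $z$ at $z=0$'' is not legitimate: a pointwise asymptotic in $T$ cannot be differentiated in the parameter without uniform control of the error term in $z$ --- that uniformity is precisely the quasi-power hypothesis of Hwang's theorem, which you yourself note is unavailable here. The paper avoids this by differentiating the Dirichlet series instead: $\partial_t\eta_\G(s,t)|_{t=0}=\sum_{p}c(p)e^{-s\ell(p)}$ (termwise differentiation of a convergent series is fine), which by the expansion $\eta_\G=\phi/(1-\lambda(s,t))$ has a double pole at $s=1$ with leading coefficient governed by $\lambda_s(1,0)$ and $\lambda_t(1,0)$; the Delange Tauberian theorem applied to this series (after first reducing to a strictly positive cost, e.g.\ adding a multiple of $\ell$, so that the relevant counting function is monotone) gives $\sum_{\ell(p)<T}c(p)\sim \mathrm{const}\cdot T e^{T}$ and hence the stated limit. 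If you rework your argument to (i) replace the first-return decomposition by the finite-matrix reduction (or else actually prove the recurrence and continuation properties of $N$), and (ii) differentiate the series rather than the asymptotic, you recover the paper's proof.
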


This shows that there is an asymptotic average cost across paths in $\mathcal{P}_v$. We can then ask for more refined statistical results, for example, large deviation and central limit theorems. We will prove the following higher dimensional statistical limit laws. In the following when we have $n$ cost functions $c_1, \ldots, c_n$  we will write $\overline{c} : \E \to \R^n_{\ge 0}$ for the vector of costs $(c_1(e), \ldots, c_n(e))$.
\begin{theorem}\label{thm.ldt}
Let $\mathcal{G}$ be a directed graph satisfying assumptions $(\textnormal{G}1) - (\textnormal{G}3)$. Suppose $c_1,\ldots, c_n$ are $n$ costs on the edges. Then there exists a vector $\Lambda(\overline{c}) \in \R^n_{> 0}$ such that for any $\epsilon >0$
\[
\lim_{T\to\infty} \frac{1}{T} \log \left( \mu_T\left\{ p \in \P_v : \left\|\frac{\overline{c}(p)}{T} - \Lambda(\overline{c}) \right\| > \epsilon \right\} \right) < 0
\]
as $T\to\infty$. Here $\|\cdot\|$ is any norm on $\R^n$.
\end{theorem}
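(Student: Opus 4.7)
The plan is to adapt the Gärtner--Ellis paradigm to this infinite-graph, non-Dolgopyat setting. For $t \in \R^n$, introduce the two-variable Poincaré series
$$Z(s,t) = \sum_{p \in \P_v} e^{-s\ell(p) + \langle t,\, \overline{c}(p)\rangle},$$
which by (G1) and the cost bound $|c_i(e)| \le C\ell(e)$ converges absolutely in some right half-plane uniformly for $t$ in a neighbourhood of $0$. Grouping closed paths by their edge sequence writes $Z(s,t)$ as a resolvent of an infinite transfer matrix $M(s,t)$ whose entry from $e$ to $e'$ is $e^{-s\ell(e') + \langle t, \overline{c}(e')\rangle}$ when $e'$ may follow $e$, and zero otherwise.

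The first step, following the Hofbauer--Keller reduction highlighted in the paper's method discussion (equation (\ref{eq.key})), is to replace the analysis of $I - M(s,t)$ with that of a finite matrix $I - M_N(s,t)$, obtained by fixing a finite subgraph $\G_N$ and lumping excursions outside $\G_N$ into the matrix entries. Assumption (G1) controls the convergence of the excursion sums, and (G2) guarantees the needed connectivity and uniform bound on detour lengths, so that the entries of $M_N(s,t)$ are jointly analytic in $(s,t)$ in a complex neighbourhood of $(h,0)$, where $h$ is the exponential growth rate of $\#\P_v(T)$. Assumption (G3) yields, via a Wiener-type argument on $M_N$, simplicity and isolation of the leading eigenvalue on the critical line. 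Standard Perron--Frobenius perturbation theory then furnishes a real-analytic pressure function $t \mapsto P(t)$ near $0$ with $P(0)=h$ such that $Z(s,t)$ has a simple pole at $s = P(t)$ and no other singularities in a small complex ball around $(h,0)$.

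The next step is to convert this analytic information into asymptotics for $F(T,t) := \sum_{p \in \P_v(T)} e^{\langle t, \overline{c}(p)\rangle}$. Since the critical pole is simple and isolated, a one-sided Ikehara--Wiener Tauberian theorem gives
$$\lim_{T \to \infty} \frac{1}{T}\log F(T,t) = P(t) - h =: \Phi(t)$$
pointwise for $t$ in a real neighbourhood of $0$. This is the \emph{weak quasi-power} conclusion alluded to in Proposition \ref{prop.wqp}; crucially it only needs analyticity of $Z(\cdot,t)$ near its critical pole, not on a full strip $\Re(s) > h - \eta$ as Dolgopyat estimates would supply. Define $\Lambda(\overline{c}) := \nabla P(0)$; consistency with Theorem \ref{thm.average} follows by differentiating the logarithm of the convex function $t \mapsto F(T,t)$ at $0$ and using convexity to upgrade pointwise convergence to convergence of gradients. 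The large deviation conclusion is then the usual Chernoff argument: via the union bound one reduces to one coordinate at a time, and each one-dimensional tail is bounded by $\exp\bigl(-T[\Phi^*(\Lambda + v) + o(1)]\bigr)$ using an exponential Chebyshev inequality with $F(T, \pm \tau e_i)$. Since $\Phi$ is $C^1$ and strictly convex near $0$ with $\nabla\Phi(0) = \Lambda$, the Legendre transform $\Phi^*$ is strictly positive away from $\Lambda$, giving the required negative $\limsup$.

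The main obstacle, as the paper stresses, is the absence of Dolgopyat estimates: the series $Z(s,t)$ genuinely has poles accumulating arbitrarily close to $\Re(s) = h$, so the classical uniform-analytic-continuation route fails. The Hofbauer--Keller reduction to the finite matrix $M_N(s,t)$ is what circumvents this. The delicate point is to verify that $M_N(s,t)$ inherits the correct pole structure and non-arithmeticity from the infinite graph uniformly in $t$ near $0$, and that the excursion-weighted entries remain jointly analytic in $(s,t)$ on a sufficient complex neighbourhood --- this is precisely where (G1), (G2) and (G3) work in concert, and is where the bulk of the technical work resides.
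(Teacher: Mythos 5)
Your proposal follows essentially the same route as the paper: the two-variable Poincar\'e series, the Hofbauer--Keller reduction to a finite matrix whose leading eigenvalue is controlled by Perron--Frobenius perturbation theory, (G3) ruling out further poles on the critical line, a Tauberian theorem giving the weak quasi-power limit (the paper's Proposition \ref{prop.wqp}), and then a local G\"artner--Ellis/Chernoff argument (the paper's Lemma \ref{lem.localldp}) after reducing to one coordinate at a time. One small overstatement: $\Phi$ need not be strictly convex (e.g.\ when $c=\tau\ell$ the scaled cumulant generating function is linear), but your exponential Chebyshev step only needs convexity, $C^1$ smoothness and $\nabla\Phi(0)=\Lambda$ --- exactly the hypotheses of the paper's lemma --- so the argument is unaffected.
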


We then have the following central limit theorem.

\begin{theorem}\label{thm.mclt}
Let $\mathcal{G}$ and $c_1,\ldots, c_n$ and $\Lambda(\overline{c})$ be as in Theorem \ref{thm.ldt}.  Then we have the following. There exists a positive definite, symmetric matrix $\Sigma \in \GL_d(\R)$ such that the distribution of
\[
 \frac{\overline{c}(p)- \Lambda(\overline{c}) T}{\sqrt{T}} \ \ \text{ with respect to $\mu_T$ }
\]
converges as $T\to\infty$ to the multi-dimensional Gaussian distribution with mean $0$ and covariance matrix $\Sigma$.
Furthermore, $\Sigma$ is strictly positive definite if and only if there does not exist  non-zero $t \in \R^n$ such that for all closed paths $p$, 
\[
\langle t, \overline{c}(p) - \Lambda(\overline{c})\ell(p)\rangle = 0.
\]
\end{theorem}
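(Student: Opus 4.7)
The plan is to reduce the multi-dimensional statement to the one-dimensional setting via the Cramér--Wold device: it suffices to show that for every fixed $t \in \R^n$ the scalar $\langle t, \overline{c}(p) - \Lambda(\overline{c})T\rangle / \sqrt{T}$ converges in distribution under $\mu_T$ to a centred Gaussian whose variance in $t$ will define the quadratic form $t \mapsto t^{\top} \Sigma t$. Since $\langle t, \overline{c}(\cdot)\rangle$ is itself a scalar cost function on edges, the problem becomes a parametrised family of one-dimensional counting CLTs in which I need to track how the leading asymptotics of the relevant generating function depend analytically on $t$.

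For a scalar cost $\varphi$ I would work with the two-variable Poincaré series
\[
\eta(s,\xi) \;=\; \sum_{p \in \P_v} e^{i\xi\varphi(p) - s\ell(p)}, \qquad s, \xi \in \C,
\]
which converges for $\Re(s)$ large by (G1). The Hofbauer--Keller reduction formalised in (\ref{eq.key}) rewrites $\eta(s,\xi)$ as a ratio whose denominator is the determinant of a finite, analytically perturbed matrix. This makes $\eta(\cdot,\xi)$ meromorphic past its initial abscissa of convergence and isolates a leading simple pole at $s = h(\xi)$, with $h(0)$ equal to the exponential growth rate of $\#\P_v(T)$. Analytic perturbation theory on finite matrices then yields that $\xi \mapsto h(\xi)$ extends analytically to a complex neighbourhood of $0 \in \C^n$, with $-i\,\nabla h(0) = \Lambda(\overline{c})$ and $-\mathrm{Hess}(h)(0) = \Sigma$; these match the asymptotic formulas in the remark following Theorem \ref{thm.general}.

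The delicate step is extracting convergence of the characteristic functions
\[
\int_{\P_v} e^{i\langle \xi, \overline{c}(p) - \overline{\Lambda} T\rangle / \sqrt{T}} \, d\mu_T(p) \;\longrightarrow\; e^{-\tfrac12 \xi^{\top} \Sigma \xi}
\]
uniformly on compact sets in $\xi$. A standard Hwang quasi-power argument would contour-shift $\eta(\cdot,\xi)$ past its critical line and read off the residue at the leading pole, but (G1)--(G3) permit further poles arbitrarily close to that line and no Dolgopyat-type bounds are available to control the shifted integral. Instead I would generalise the argument of Hwang and Janson \cite{hwang.janson}, which is based on a smoothed Mellin/Perron inversion and requires only analytic isolation of the leading pole plus polynomial-in-imaginary-part bounds along a single nearby vertical line; both of these are supplied directly by the finite-determinant representation of $\eta$. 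Two generalisations of \cite{hwang.janson} are needed: passage to a multiparameter perturbation $\xi \in \R^n$ (largely formal once the scalar case is established), and replacing the strong transfer-operator estimates used in the original argument with the weaker pointwise bounds available here. Lévy continuity then delivers the claimed Gaussian limit.

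For the non-degeneracy criterion, $-h(\xi)$ serves as a log-moment generating function for the limit, so $t^{\top}\Sigma t = 0$ iff $s \mapsto h(st)$ has vanishing second derivative at $0$, iff the scalar cost $\langle t, \overline{c}\rangle - \langle t, \Lambda(\overline{c})\rangle\,\ell$ has zero asymptotic variance. A standard coboundary argument from thermodynamic formalism, transferred to the infinite graph via the approximation of variances by their finite-subgraph analogues alluded to in the introduction, shows this happens precisely when the function is cohomologous to zero on cycles; since the cost is an edge-sum, coboundaries vanish around every cycle and the criterion reduces to $\langle t, \overline{c}(p) - \Lambda(\overline{c})\ell(p)\rangle = 0$ for all closed paths $p$, as in the statement. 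The main obstacle will be the Hwang--Janson generalisation itself: verifying that the pointwise bounds surviving in the absence of Dolgopyat estimates really do suffice for the smoothed inversion to produce the correct error term, uniformly in the multiparameter $\xi$, even as infinitely many poles of $\eta(\cdot,\xi)$ may cluster near the critical line.
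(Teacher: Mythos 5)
Your reduction via Cram\'er--Wold and your identification of $\Sigma$ through the Hessian of the leading eigenvalue are consistent with the paper, but the analytic core of your argument has a genuine gap. You propose to prove convergence of characteristic functions by a smoothed Mellin/Perron inversion applied to $\eta(s,\xi)=\sum_{p}e^{i\xi\varphi(p)-s\ell(p)}$, claiming that the finite-determinant representation supplies ``polynomial-in-imaginary-part bounds along a single nearby vertical line.'' It does not. The representation $\eta=\phi/\det(I-W_{s,t})$ gives meromorphic continuation, but it gives no lower bound on $|\det(I-W_{s,t})|$ along or near the critical line: as the paper stresses, zeros of this determinant (poles of the Poincar\'e series) accumulate arbitrarily close to $\mathfrak{Re}(s)=h$, so on the critical line $|\eta(s,\xi)|$ is not polynomially bounded in $\mathfrak{Im}(s)$, and any line strictly to the right produces an error term of size $e^{(h+\delta)T}$, which swamps the main term. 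Controlling the continuation in a strip with quantitative bounds is exactly the Dolgopyat-type input that is unavailable here; your own closing sentence flags this, but the proposal offers no substitute, and the Tauberian route cannot rescue it either, because for real $\xi\neq 0$ the coefficients $e^{i\xi\varphi(p)}$ are not nonnegative, so Delange's theorem does not apply to $\eta(\cdot,\xi)$.

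The paper's proof avoids characteristic functions and contour shifting altogether. It recentres the costs, studies the real Dirichlet series $\eta_{\widehat{q}}(s)=\sum_{p\in\P_v}\varphi_1(p)^{q_1}\cdots\varphi_n(p)^{q_n}e^{-s\ell(p)}$, and uses Fa\`a di Bruno's formula together with the vanishing of the first $t$-derivatives of the eigenvalue to show (Proposition \ref{prop.analyticity}) that the singularity at $s=1$ has order $1+q/2$ for even $q=|\widehat{q}|$, with leading coefficient given by the Wick pairing sum in the $\sigma_{i,j}$, and lower order for odd $q$. The asymptotic joint moments are then extracted from the Delange Tauberian theorem -- which needs only analyticity on the closed half-plane minus the pole, no vertical-line bounds -- after careful manipulations (adding $\ell(p)^{q}$ terms and using identities like $G_2=G_1+2G_3$) to ensure the series fed to the Tauberian theorem have nonnegative coefficients, followed by partial summation to replace the weight $\ell(p)^{q/2}$ by $T^{q/2}$. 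The CLT then follows from Billingsley's method of moments (Theorems 29.4 and 30.2), and non-degeneracy from Lemma \ref{lem.secondderiv}, which reduces to a finite subgraph and invokes the finite-state Livsic theorem -- close in spirit to, but more precise than, your coboundary sketch. To repair your proposal you would either need to prove the missing bounds on $\eta(s,\xi)$ (essentially a Dolgopyat estimate, which the paper asserts fails) or switch, as the paper does, to a moment-based argument that never requires them.
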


In this theorem the matrix $\Sigma$ has entries
\[
\Sigma_{i,j}  = \lim_{T\to\infty} \frac{1}{T}  \int_{\P_v} (c_i(p) - \Lambda_i T)( c_j(p) - \Lambda_j T) = \lim_{T\to\infty} \frac{1}{\#\P_v(T)} \sum_{\ell(p) < T} \frac{(c_i(p)-\Lambda_i T)(c_j(p) - \Lambda_j T)}{T}
\]
where $\Lambda_i, \Lambda_j$ are the averages for $c_i, c_j$ from Theorem \ref{thm.average}. We also show the following.

\begin{proposition}\label{prop.independent}
The quantities $\Lambda(\overline{c})$ and $\Sigma$ introduced in the above theorems do not depend on the choice of $v \in V$.
\end{proposition}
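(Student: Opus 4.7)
The plan is to exploit assumption (G2) to construct short ``bridge'' paths between any two base vertices $v, v' \in V$ and then compare counting statistics via the resulting injections of path spaces. Fix an edge terminating at $v$ and an edge originating at $v'$, and apply (G2) to produce paths $\alpha : v' \to v$ and $\beta : v \to v'$ of bounded length; set $L = \ell(\alpha) + \ell(\beta)$ and $\overline{c}_0 = \overline{c}(\alpha) + \overline{c}(\beta) \in \R^n$. Concatenation gives an injective map $\iota : \P_v \to \P_{v'}$, $p \mapsto \alpha \cdot p \cdot \beta$, satisfying $\ell(\iota(p)) = \ell(p) + L$ and $\overline{c}(\iota(p)) = \overline{c}(p) + \overline{c}_0$, so $\iota$ restricts to $\P_v(T) \hookrightarrow \P_{v'}(T+L)$. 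A symmetric construction produces an injection $\iota' : \P_{v'} \to \P_v$ with analogous properties. Together these two injections force $\#\P_v(T)$ and $\#\P_{v'}(T)$ to share a common exponential growth rate and to be comparable up to bounded factors after bounded shifts of $T$; in particular $\rho_T := \#\P_v(T)/\#\P_{v'}(T+L)$ is bounded above and below by positive constants independent of $T$.

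To prove $\Lambda_v(\overline{c}) = \Lambda_{v'}(\overline{c})$, I would argue by contradiction. If they differ, pick disjoint $\epsilon$-balls $B_\epsilon(\Lambda_v), B_\epsilon(\Lambda_{v'})$ in $\R^n$. By Theorem \ref{thm.ldt} at $v$, the set $A_T := \{p \in \P_v(T) : \overline{c}(p)/T \in B_{\epsilon/2}(\Lambda_v)\}$ satisfies $\#A_T \ge (1 - e^{-\delta T})\#\P_v(T)$. For $p \in A_T$, the image $\iota(p) \in \P_{v'}(T+L)$ has $\overline{c}(\iota(p))/(T+L) = (\overline{c}(p) + \overline{c}_0)/(T+L)$, which for large $T$ lies in $B_\epsilon(\Lambda_v)$ and hence outside $B_\epsilon(\Lambda_{v'})$. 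Since $\rho_T$ is bounded below, this produces a positive-density subset of $\P_{v'}(T+L)$ failing the $v'$-large deviation theorem, a contradiction. Hence $\Lambda_v = \Lambda_{v'} =: \Lambda$.

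For $\Sigma_v = \Sigma_{v'}$, I would push the central limit theorem through $\iota$. The identity $\overline{c}(\iota(p)) - \Lambda \ell(\iota(p)) = (\overline{c}(p) - \Lambda \ell(p)) + (\overline{c}_0 - \Lambda L)$ shows that the $\sqrt{T}$-normalised deviations at $p$ and $\iota(p)$ differ by $O(T^{-1/2})$. Hence, by Theorem \ref{thm.mclt} at $v$, the sub-probability measure on $\R^n$ obtained from $\mu_{T+L}^{v'}$ restricted to $\iota(\P_v(T))$ and pushed forward through the normalised deviation map converges, along a subsequence if necessary, to $\rho \cdot \mathcal{N}(0, \Sigma_v)$, where $\rho$ is a subsequential limit of $\rho_T$. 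The full distribution under $\mu_{T+L}^{v'}$ converges to $\mathcal{N}(0, \Sigma_{v'})$ by Theorem \ref{thm.mclt} at $v'$, so the complement's contribution is a non-negative measure, giving the Borel-measure inequality
\[
\rho \cdot \mathcal{N}(0, \Sigma_v) \le \mathcal{N}(0, \Sigma_{v'}).
\]
Comparing Gaussian densities pointwise (where $\rho > 0$ controls only an additive constant after taking logarithms) forces the positive-semidefinite ordering $\Sigma_v \preceq \Sigma_{v'}$. Running the symmetric argument with the roles of $v$ and $v'$ exchanged yields $\Sigma_{v'} \preceq \Sigma_v$, so $\Sigma_v = \Sigma_{v'}$.

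The main obstacle is justifying that the $\iota$-pushed CLT on $\iota(\P_v(T))$ really carries the mass factor $\rho$, since $\iota(\P_v(T))$ is a proper subset of $\P_{v'}(T+L)$ in general. I expect this to follow from the convergence in Theorem \ref{thm.mclt} at $v$ together with the uniform smallness of the shift $(\overline{c}_0 - \Lambda L)/\sqrt{T}$, so that restricting and pushing commute with the limit. The bridging construction is where (G2) is essential, and both the large deviation and central limit comparisons rest on the two-sided growth-rate matching it supplies.
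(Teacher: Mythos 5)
Your argument is correct in substance, but it is a genuinely different route from the paper's. The paper disposes of the proposition in one line: $\Lambda(\overline{c})$ and $\Sigma$ are read off from the first and second derivatives of the leading eigenvalue $\lambda(s,t)$ of $W_{s,t}$, and the matrices $M_{s,t}$, $W_{s,t}$ are built from the graph and the costs alone -- the base vertex $v$ enters $\eta_{\G}(s,t)$ only through the vector $\textbf{w}(s,t)$, which affects the analytic numerator $\phi(s,t)$ but not the pole locations or their orders. Your proof instead transports the conclusions of Theorems \ref{thm.ldt} and \ref{thm.mclt} between base vertices by a bridging injection supplied by (G2), which is softer and would apply to any situation where one has the limit laws at each vertex but no common spectral object; the price is that you must track normalisations carefully, whereas the paper's observation is structural and immediate. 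Three small points to tighten. First, since $\P_v$ consists of paths starting at $v$ with varying terminal vertex, the concatenation $p \cdot \beta$ is not defined; simply prepend $\alpha$ (a path from $v'$ to $v$, obtained from (G2) applied to an edge leaving $v'$ and an edge entering $v$) and drop $\beta$ -- nothing else in your argument uses it. Second, the two injections alone give $\#\P_v(T) \le \#\P_{v'}(T+L)$ and the reverse after a further shift, but the positive lower bound on $\rho_T$ additionally needs $\#\P_v(T+L+L')/\#\P_v(T)$ bounded; this follows from the asymptotic $\#\P_v(T) \sim C_v e^{T}$ established via Proposition \ref{prop.tau} in the proof of Theorem \ref{thm.average}, so cite that rather than attributing it to the injections. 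Third, since $\Sigma$ may a priori be only positive semidefinite, the pointwise density comparison is not available in general; instead test the measure inequality $\rho\,\mathcal{N}(0,\Sigma_v) \le \mathcal{N}(0,\Sigma_{v'})$ on the sets $\{x : |\langle u, x\rangle| > R\}$ (continuity sets for a.e.\ $R$), and compare one-dimensional Gaussian tails to get $u^{T}\Sigma_v u \le u^{T}\Sigma_{v'} u$ for every $u$, then symmetrise. With these adjustments your proof is complete.
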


These result, for graphs, which we believe are of independent interest will help us to show our results for translation surfaces.


 \section{Counting for graphs} \label{sec.graph}
\subsection{Preliminaries}
In this section consider a 
graph   $\mathcal G = (V,\E)$  and length function $\ell$
which satisfy the hypotheses $(\text{G}1), (\text{G}2), (\text{G}3)$.
We assume that the edge set $\E = \{e_1, e_2, \ldots\}$ is ordered by non-decreasing length.
For $e \in \E$ we will write $i(e), t(e) \in V$ for the initial and terminal vertex of $e$ respectively. 
Let $c_1,\ldots, c_n $ be costs and write $\overline{c}$ for the vector of these costs. As in the previous section we use $\P_v$ and $\P_v(T)$ to denote the collection of paths starting with the vertex $v$ and the paths starting at the edge $v$ with length at most $T$.
\begin{definition}
 We can associate to $\mathcal G$ the matrix $M$ 
 defined by 
\[
M(e,e')=\begin{cases} 1 &\mbox{if }  t(e)=i(e'), \\
0 & \mbox{otherwise. }  \end{cases} 
\]
For each  $s\in \mathbb C$ and $t \in \C^d$ 	
we define the perturbed matrix $M_{s,t}$ by 
\[
M_{s,t}(e,e')=M(e,e')e^{-s\ell(e') - \langle t, \overline{c}(e') \rangle}
\]
 for $e,e'\in \mathcal{E}$.
\end{definition}

Let $P(n,e,e')$ denote the set of paths in $\G$ consisting of $n$ edges, 
starting with edge $e$ and ending with edge $e'$. For any $n\geq 1$, we can write the $(e,e')^{th}$ entry of the $n^{th}$ power of the matrix as:
\[
M_{s,t}^n(e,e') = e^{s\ell(e) + \langle t, \overline{c}(e)\rangle}\sum_{p\in P(n+1,e,e')} e^{-s\ell(p) - \langle t, \overline{c}(p) \rangle}
\]
which will be finite by assumption $(\text{G}1)$. Moving forward, to simplify notation, we will use the enumeration $\E = \{e_1, e_2, \ldots\}$ (where we order the edges in non-decreasing length) and write $M_{s,t}(i,j)$ for the entry $M_{s,t}(e_i,e_j)$.

Note that when $\mathfrak{Re}(s)>0$  we can find $\epsilon(s) >0$ such that for all $t \in \C^d$ with $|t| \le \epsilon(s)$,  we can interpret $M_{s,t}$ as a bounded operator on $\ell^\infty(\mathbb{C})$ where
\[
M_{s,t}(u)=\Big(\sum_{j=1}^\infty M_{s,t}(i,j)u_j\Big)_{i=1}^\infty.
\]
To proceed, we would like to understand the domain of meromorphicity  of the linear operator $(I- M_{s,t})^{-1}: l^\infty(\mathbb C) \to l^\infty(\mathbb C)$.
To do so, we follow \cite{Pollicott-Colognese} which in turn uses an idea by Hofbauer and Keller \cite{HK}, to show that the invertibility of the $M_{s,t}$ depends only on the determinant of an associated finite sub-matrix.\\

Fix  $\epsilon > 0$ and, for convenience,  assume also that
\[
0 < \epsilon < h:= \limsup_{T \to \infty} \frac{1}{T} \log \#\P_v(T).
\]
It is not hard to see that $0 < h < \infty$  (see Lemma 2.3 of \cite{Pollicott-Colognese}).

We can truncate the matrix $M_{s,t}$ to the $k \times k$ matrix  $A_{s,t} = (M_{s,t}(i,j))_{i,j=1}^k$.
We then write
\[
M_{s,t} = 
\left(
\begin{matrix}
A_{s,t}&B_{s,t}  \\ C_{s,t} &D_{s,t}
\end{matrix}
\right)
\]
where $D_{s,t} = \left(M_{s,t}(i+k,j+k) \right)_{i,j=1}^\infty$. 

Again, we can interpret $I-D_{s,t}$ as a bounded linear operator  on $l^\infty(\mathbb C)$ 
and write   $(I-D_{s,t})^{-1}=\sum_{m=0}^\infty D_{s,t}^m$ if the operator $D_{s,t}$ has norm $\|D_{s,t}\|<1$.
This is true when  $\mathfrak{Re}(s) \geq \epsilon$ and $|t| \le \delta(\epsilon)$ for some $\delta(\epsilon) > 0$. This is because, by the definition of a cost function, for $s$ with $\mathfrak{Re}(s) \geq  \epsilon$  and $t \in \C^n$ with $|t| \le \delta$ 
we have that
\[
-\Re(s) \ell(p) - \Re(\langle t, \overline{c}(p) \rangle) \ge (\epsilon - C \delta)\ell(p)
\]
for some $C >0$.
Therefore, assuming $\delta$ is sufficiently small so that $\epsilon - C\delta > \epsilon/2$,
\[
\|D_{s,t}\|
\leq   \sup_{n\in\mathbb{N}}\sum_{m=1}^\infty |D_{s,t}(n,m)| 
\leq   \sum_{m=1}^\infty  e^{-(\mathfrak{Re}(s) - C\delta)\ell(m+k)} \leq   \sum_{m=1}^\infty  e^{-\epsilon \ell(m+k)/2} <1
\]
for $k$ sufficiently large. 

We can then verify by formal matrix multiplication that  
\begin{equation} \label{eq.mm}
I - M_{s,t} = 
\left(
\begin{matrix}
I - A_{s,t} - B_{s,t} (I-D_{s,t})^{-1}C_{s,t}&-B_{s,t} (I-D_{s,t})^{-1}  \\ 0 &I 
\end{matrix}
\right)
\left(
\begin{matrix}
I &0  \\ -C_{s,t} & I - D_{s,t}
\end{matrix}
\right).
\end{equation}

\noindent
We define the $k \times k$ matrix  $W_{s,t}:= A_{s,t} +B_{s,t} (I-D_{s,t})^{-1}C_{s,t}$, where each entry is given by a convergent series.
By (3.2), whenever $\det(I-W_{s,t})\neq 0$ then  we see that $I-M_{s,t}$ is invertible,  with inverse
\begin{equation}\label{eq.key}
(I-M_{s,t})^{-1}=\left(
\begin{matrix}
I &0  \\ (I - D_{s,t})^{-1}C_{s,t} & (I - D_{s,t})^{-1}
\end{matrix}
\right)
\left(
\begin{matrix}
(I-W_{s,t})^{-1}&(I-W_{s,t})^{-1}B_{s,t} (I-D_{s,t})^{-1}  \\ 0 &I 
\end{matrix}
\right).
\end{equation}
This leads to the following result.

\begin{lemma}\label{operator}
For each $s \in \C$ with $\mathfrak{Re}(s) > 0$ there exists $\epsilon(s), \delta(s) > 0$ such that the operator $(I-M_{s,t})^{-1}$ has a bi-analytic extension to $\{ |s| \le \delta(s)\} \times \{ |t| \le \epsilon(s)\}$
except when $\det(I-W_{s,t}) = 0$.
\end{lemma}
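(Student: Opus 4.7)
My plan is to fix a base point $s_0$ with $\mathfrak{Re}(s_0)>0$ and argue that on a small bipolydisc around $(s_0,0)$ every factor appearing in the block inverse formula $(\ref{eq.key})$ is jointly analytic in $(s,t)$, with the sole potential singularities of the product coming from the $k\times k$ determinant $\det(I-W_{s,t})$. Choose $\delta(s_0)>0$ small enough that $\mathfrak{Re}(s)\ge \mathfrak{Re}(s_0)/2$ for all $s$ with $|s-s_0|\le \delta(s_0)$, and then repeat the estimate preceding the lemma to pick $\epsilon(s_0)>0$ and a truncation index $k$ for which
\[
\|D_{s,t}\|_{\ell^\infty\to\ell^\infty} \le \sum_{m=1}^\infty e^{-\mathfrak{Re}(s_0)\ell(m+k)/4} < 1
\]
uniformly on $U:=\{|s-s_0|\le\delta(s_0)\}\times\{|t|\le\epsilon(s_0)\}$. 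Assumption $(\text{G}1)$ is what makes such a $k$ exist.

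Next I would establish joint analyticity. Each entry of $M_{s,t}$ is either $0$ or a single complex exponential in $(s,t)$, hence entire; so $A_{s,t}$ is a $k\times k$ matrix of entire functions. For $(I-D_{s,t})^{-1}$ I would expand the Neumann series $\sum_{m\ge0}D_{s,t}^m$ and invoke a Weierstrass-type argument: the partial sums are operator-valued polynomials in entire functions, and the above norm bound gives uniform operator-norm convergence on $U$, which preserves joint analyticity of each matrix entry. The matrix $W_{s,t}=A_{s,t}+B_{s,t}(I-D_{s,t})^{-1}C_{s,t}$ is $k\times k$ with entries given by absolutely convergent double sums of analytic functions whose convergence is again controlled by the $\|D_{s,t}\|<1$ estimate; hence its entries are jointly analytic on $U$, and so is the scalar $\det(I-W_{s,t})$.

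Now I would assemble the conclusion. By Cramer's rule, on the open set $U\setminus\{\det(I-W_{s,t})=0\}$ the matrix $(I-W_{s,t})^{-1}$ exists and its entries are jointly analytic (quotients of jointly analytic functions with non-vanishing denominator). Substituting into $(\ref{eq.key})$, the operator $(I-M_{s,t})^{-1}$ is expressed as a product of four block matrices whose entries are either constants, jointly analytic functions on $U$, or jointly analytic on $U\setminus\{\det(I-W_{s,t})=0\}$. Hence each entry of $(I-M_{s,t})^{-1}$, and thus the operator itself as a bounded-operator-valued map into $\ell^\infty$, is jointly analytic precisely on $U\setminus\{\det(I-W_{s,t})=0\}$, which is the desired bi-analytic extension. (Boundedness on this open set follows because the inverse formula continues to hold and all the operator norms are finite away from the exceptional locus.)

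The main obstacle will be the operator-valued regularity in Step~2: one must carefully justify that the infinite series defining $(I-D_{s,t})^{-1}$ and the matrix products $B_{s,t}(I-D_{s,t})^{-1}C_{s,t}$ converge uniformly on compacta of $U$ in the operator norm on $\ell^\infty(\mathbb{C})$, so that analyticity on the finite-dimensional level promotes to joint analyticity of the operator-valued map. Once this is checked, the rest is routine block-matrix algebra together with Cramer's rule for the finite matrix $I-W_{s,t}$.
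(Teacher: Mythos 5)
Your argument is correct and follows essentially the same route as the paper: both rest on the block factorisation of $I-M_{s,t}$ and the resulting inverse formula, reducing invertibility of $I-M_{s,t}$ to the non-vanishing of the finite determinant $\det(I-W_{s,t})$. The only difference is that the paper dispatches the bi-analyticity in one line by citing Hartogs' theorem, whereas you verify joint analyticity of each factor directly from the uniform Neumann-series bound $\|D_{s,t}\|<1$ together with Cramer's rule for $I-W_{s,t}$, which is a more self-contained justification of the same step.
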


\begin{proof}
This follows from the identity (\ref{eq.mm}) and Hartogs' Theorem \cite[Theorem 1.2.5]{Hartogs} .
\end{proof}

We now turn our attention to studying the matrices $W_{s,t}$.

\subsection{The matrices $W_{s,t}$}

We begin with the following lemma. Recall that
a non-negative $k\times k$ matrix $M$ is {\it irreducible} if for each $i,j$ with $1\leq i,j\leq k$ there exists a natural number $m$ (depending on $i,j$) such that $(M^m)_{i,j}>0$.

\begin{lemma}
Take $\epsilon >0$. Then there exists $\delta(\epsilon) > 0$ such that $W_{s,t}$ is an irreducible matrix for all $\Re(s) > \epsilon$ and all real $ t \in \R^n$ with $|t| \le \delta(\epsilon)$. Further for each real $s > 0$,  $W_{s,0}$ is a non-negative irreducible  matrix and $W_{s,0}$ has a simple maximal positive eigenvalue
$\lambda(s)=\rho(W_{s,0})$, which depends  analytically on $s$ and satisfies $\lambda'(s)<0.$
\end{lemma}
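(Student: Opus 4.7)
The plan is to exploit a first-return interpretation of $W_{s,0}$: the entry $W_{s,0}^m(i,j)$ should be a weighted count of paths in $\G$ from $e_i$ to $e_j$ whose intermediate edges lie in the \emph{tail} $\{e_{k+1},e_{k+2},\ldots\}$, with exactly $m-1$ returns to the \emph{head} block $\{e_1,\ldots,e_k\}$. Once this interpretation is in place, the non-negativity and irreducibility claims reduce to combinatorial facts about paths in $\G$, while the spectral claims follow from the classical Perron--Frobenius theorem applied to the finite $k\times k$ matrix $W_{s,0}$.

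First I would establish non-negativity for real $s>0$ (and $t=0$): each of $A_{s,0},B_{s,0},C_{s,0},D_{s,0}$ has non-negative real entries, and since $\|D_{s,0}\|<1$ for $k$ sufficiently large (by the estimate established before the lemma), the Neumann series $(I-D_{s,0})^{-1}=\sum_{m\ge 0}D_{s,0}^m$ is non-negative; hence $W_{s,0}\ge 0$. For irreducibility of $W_{s,0}$, given $1\le i,j\le k$, hypothesis $(\textnormal{G}2)$ supplies a path in $\G$ from $e_i$ to $e_j$ of length at most $C+\ell(e_i)+\ell(e_j)$. Decomposing this path according to its visits to the head block produces a concatenation of excursions whose weighted counts match entries of $W_{s,0}$, so $W_{s,0}^m(i,j)>0$ for some $m$.

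Applying Perron--Frobenius to the non-negative irreducible $k\times k$ matrix $W_{s,0}$ yields a simple maximal positive eigenvalue $\lambda(s)=\rho(W_{s,0})$ with strictly positive right and left eigenvectors $v(s),w(s)$. The entries of $W_{s,0}$ are analytic in $s$ on $\{\Re s>0\}$ (inherited from the geometrically convergent Neumann series for $(I-D_{s,0})^{-1}$), so simplicity makes $\lambda(s)$ analytic via the analytic implicit function theorem (or Kato's perturbation theory). Normalising $\langle w(s),v(s)\rangle=1$ gives
\[
\lambda'(s)=\langle w(s),(\partial_s W_{s,0})v(s)\rangle,
\]
and the first-return interpretation gives $\partial_s W_{s,0}(i,j)=-\sum_p(\ell(p)-\ell(e_i))e^{-s(\ell(p)-\ell(e_i))}\le 0$, with strict negativity wherever $W_{s,0}(i,j)>0$ (such paths have at least two edges, so $\ell(p)-\ell(e_i)\ge\ell(e_j)>0$). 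Combined with the strict positivity of $v(s),w(s)$ and the irreducibility of $W_{s,0}$, this yields $\lambda'(s)<0$.

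The most delicate point is the irreducibility of $W_{s,t}$ for complex $s$ with $\Re s>\epsilon$ and real $t$ with $|t|\le\delta(\epsilon)$, since the entries are then complex and cancellations are a priori possible. My plan is to show that the non-zero pattern of $W_{s,t}$ is constant throughout this region and coincides with that of $W_{\Re s,0}$. Each entry expands as an absolutely convergent sum of terms $e^{-s(\ell(p)-\ell(e_i))-\langle t,\overline{c}(p)-\overline{c}(e_i)\rangle}$ over admissible paths $p$; for real $t$, the factor coming from $t$ is strictly positive. Choosing $\delta$ sufficiently small so that the shortest admissible path contribution dominates the modulus of all remaining contributions prevents total cancellation in entries lying in the support of $W_{\Re s,0}$, and irreducibility of $W_{s,t}$ then inherits from that of $W_{\Re s,0}$. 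This is the step I expect to require the most care in execution.
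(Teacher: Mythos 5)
Your proposal is correct in substance and, for the spectral part, follows the same route as the paper's (one-line) proof: Perron--Frobenius for the finite non-negative irreducible matrix $W_{s,0}$, Kato perturbation theory for analyticity of the simple eigenvalue, and the standard formula $\lambda'(s)=\langle w(s),(\partial_s W_{s,0})v(s)\rangle$ together with strict positivity of the eigenvectors for $\lambda'(s)<0$. Where you genuinely diverge is the irreducibility step. The paper observes that the truncation $A_{s,t}$ is irreducible for $k$ large by $(\mathrm{G}2)$ --- its support is $(s,t)$-independent, since each entry is an adjacency indicator times a nonvanishing exponential --- and then (implicitly) uses that in the real regime the correction $B_{s,t}(I-D_{s,t})^{-1}C_{s,t}$ is entrywise non-negative, so $W_{s,t}\ge A_{s,t}$ entrywise and irreducibility transfers. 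You instead work directly with $W_{s,0}$ via the first-return interpretation, decomposing a $(\mathrm{G}2)$-path into excursions outside the head block; this is a more robust argument, since it does not require the connecting path to stay among the first $k$ edges, and it buys you irreducibility of $W$ itself rather than of $A$.

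The weak link is your treatment of complex $s$. Your plan --- choose $\delta$ small so that the shortest-path term dominates the modulus of the remaining contributions --- cannot work as stated: $\delta$ only controls $|t|$, while the dangerous phases $e^{-i\,\Im(s)(\ell(p)-\ell(e_i))}$ come from $\Im(s)$, which is unconstrained, and the required domination $e^{-\Re(s)\ell(p_0)}>\sum_{p\ne p_0}e^{-\Re(s)\ell(p)}$ fails for $\Re(s)$ near $\epsilon$. Each entry of $W_{s,0}$ is analytic in $s$ and positive on the real axis, so it can vanish only at isolated complex $s$; but at such points the support can genuinely shrink, so total cancellation cannot be excluded by a smallness argument. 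The clean fix is the paper's: for complex parameters only the parameter-independent support of $A_{s,t}$ is needed (and that is where the extension of $\lambda$ comes from, via perturbation off the real axis), while the Perron--Frobenius input is only ever invoked for real $s>0$ and small real $t$, where your non-negativity and first-return arguments apply verbatim.
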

\begin{proof}
Note that if $k$ is sufficiently large $A_{s,t}$ is irreducible by assumption $(\text{G}2)$. All other parts of the lemma follow from the Perron-Frobenius Theorem and analytic perturbation theory \cite{kato}.
\end{proof}

We will write $\lambda(s,t)$ for the leading eigenvalue of $W_{s,t}$ for $(s,t)$ in a neighbourhood of $(1,0)$ which exists by the above lemma.

\begin{lemma}\label{lem.deriv}
Suppose that the vector of cost function $\overline{c} : \P_v \to \R_{\ge 0}^n$  has component cost functions that are positive and not identically zero.
We then have that $\lambda_s(1,0,\ldots,0)  < 0$ and $\lambda_{t_i}(1,0,\ldots, 0) > 0$ for each $i=1,\ldots, n$. 
\end{lemma}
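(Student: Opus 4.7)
The plan is to combine first-order analytic perturbation theory for a simple isolated Perron eigenvalue with an explicit path-sum expansion of the entries of $W_{s,t}$. By the previous lemma, for real $(s,t)$ in a neighbourhood of $(1,0)$ the matrix $W_{s,t}$ is an analytic perturbation of the irreducible, non-negative matrix $W_{1,0}$, whose leading eigenvalue $\lambda(1,0)$ is simple with strictly positive right and left Perron eigenvectors $v$ and $y$. Analytic perturbation theory \cite{kato} then furnishes the standard first-order formula
\[
\lambda_x(1,0) \;=\; \frac{\langle y,\, (\partial_x W_{s,t})|_{(1,0)}\, v\rangle}{\langle y, v\rangle}, \qquad x \in \{s, t_1, \ldots, t_n\},
\]
so each sign is controlled by the sign pattern of the entries of the perturbing matrix.

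The second ingredient is an explicit path-sum representation. Expanding $(I - D_{s,t})^{-1} = \sum_{m \ge 0} D_{s,t}^m$ and multiplying out gives, for $1 \le i, j \le k$,
\[
W_{s,t}(i,j) \;=\; \sum_{p \in \mathcal{Q}(i,j)} \exp\!\bigl(-s L(p) - \langle t, \mathbf{C}(p)\rangle\bigr),
\]
where $\mathcal{Q}(i,j)$ denotes the collection of finite paths from $e_i$ to $e_j$ whose intermediate edges all lie in the tail $\{e_{k+1}, e_{k+2}, \ldots\}$, and $L(p)$, $\mathbf{C}(p)$ are the total length and total vector cost accumulated along $p$, not counting the initial edge $e_i$. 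Termwise differentiation then yields
\[
\partial_s W_{s,0}(i,j) = -\sum_{p \in \mathcal{Q}(i,j)} L(p)\, e^{-sL(p)}, \qquad \partial_{t_i} W_{s,0}(i,j) = -\sum_{p \in \mathcal{Q}(i,j)} C_i(p)\, e^{-sL(p)}.
\]

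For $\lambda_s$: every path contributes $L(p) > 0$, and $(\text{G}2)$ guarantees that $\mathcal{Q}(i,j)$ is non-empty for some $(i,j)$, so $\partial_s W_{s,0}|_{s=1}$ is entrywise non-positive with at least one strictly negative entry. Pairing this against the strictly positive Perron eigenvectors $y$ and $v$ yields $\lambda_s(1,0) < 0$. For $\lambda_{t_i}$: by the non-triviality of the non-negative cost $c_i$ there is some edge $e_* \in \mathcal{E}$ with $c_i(e_*) > 0$; enlarging $k$ if necessary we may assume that $e_*$ is among the first $k$ edges. Then every path in $\mathcal{Q}(i,*)$ terminating at $e_*$ satisfies $C_i(p) \ge c_i(e_*) > 0$, and $(\text{G}2)$ supplies at least one such path, so $\partial_{t_i} W_{s,0}|_{s=1}$ carries a strictly signed entry; the Perron--Frobenius formula then delivers the claimed strict sign for $\lambda_{t_i}(1,0)$.

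The main obstacle is the bookkeeping needed to translate the non-triviality hypothesis on $c_i$ into a strict-sign condition on some entry of $\partial_{t_i} W_{s,0}$: one has to exploit both the freedom to enlarge the truncation level $k$ (so that the edge witnessing $c_i(e_*) > 0$ is visible to the truncated matrix) and the irreducibility hypothesis $(\text{G}2)$ (so that such an edge is actually reached by some path contributing to the appropriate entry of $W_{s,t}$). Everything else is a standard application of Perron--Frobenius theory and the chain rule.
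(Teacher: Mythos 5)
Your proof is correct and follows the same route as the paper's: the first-order perturbation formula for the simple Perron eigenvalue paired against the strictly positive left and right eigenvectors, with the sign read off from the entrywise sign of the differentiated matrix; you merely make explicit the path-sum expansion of $W_{s,t}=A_{s,t}+B_{s,t}(I-D_{s,t})^{-1}C_{s,t}$ and the bookkeeping (enlarging $k$, invoking $(\text{G}2)$) that the paper compresses into ``a standard computation''. One caveat: taken literally, your displayed formula $\partial_{t_i}W_{s,0}(i,j)=-\sum_{p}C_i(p)e^{-sL(p)}$ yields $\lambda_{t_i}(1,0)\le 0$, the opposite of the claimed sign --- this traces to the paper's own inconsistent convention (the definition of $M_{s,t}$ carries $e^{-\langle t,\overline{c}(e')\rangle}$ while the proof of Theorem \ref{thm.average} uses $e^{+tc(p)}$), and under the $+$ convention, which is what both the assertion $\lambda_{t_i}(1,0)>0$ and the formula $\Lambda(c)=-\lambda_t(1,0)/\lambda_s(1,0)>0$ require, your argument gives exactly the stated conclusion.
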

\begin{proof}
This is a standard computation: for $(s,t)$ in a neighbourhood of $(1,0)$ the matrices $W_{s,t}$  are analytic perturbations of $W_{1,0}$. It follows from analytic perturbation theory and the Perron-Frobenius Theorem that there exist analytically varying left and right eigenvectors $u(s,t), v(s,t)$ such that 
\[
\lambda(s,t)u(s,t) = u(s,t) W_{s,t} \ \ \text{ and } \ \ \lambda(s,t)v(s,t) = W_{s,t} v(s,t)
\]
for all $(s,t)$ in a neighbourhood $U \times V \ni (1,0)$. We assume $u(s,t)v(s,t) =1$ for all $(s,t) \in U \times V$. Differentiating the above expression and rearranging shows that 
\[
\lambda_s(1,0) = u(1,0) W_{1,0}'v(1,0) \ \ \text{ where $W_{1,0}'(i,j) = \frac{\partial }{\partial s}\Big|_{(s,t) =(1,0)} W_{s,t}(i,j)$.}
\]
It follows that $\lambda_s(1,0) < 0$. The other expressions following similarly.
\end{proof}

\subsection{Complex functions}
We can now introduce a  complex function whose analytic properties will be useful in deriving our results.
\begin{definition}
Define the complex function 
\[
\eta_{\mathcal G}(s,t) =\sum_{p\in \P_v} e^{-s\ell(p) - \langle t, \overline{c}(p)\rangle}, \quad s \in \mathbb C, t \in \C^d
\]
 where 
$\P_v = \{ p = e_1, \ldots, e_n : n \geq 0, \, i(e_1)=v \}$ is the set of paths in $\mathcal G$ starting at $v \in V$. 
\end{definition}

We first observe that $\eta_{\mathcal G}(s,t)$ converges to a bi-analytic function  on a neighbourhood of $(s,0)$ for any $\mathfrak{Re}(s) > h$.

For  $\Re(s)>0$ and $|t|$ sufficiently small, we define:
\begin{enumerate}
\item[(a)]
$\textbf{w}(s,t)=  (\chi_{\mathcal E_v}(i)e^{-s\ell(e) - \langle t, \overline{c}(e) \rangle})_{i=1}^\infty \in \ell^1$ where $\chi_{\mathcal E_v}$ denotes the characteristic function of  the set $\mathcal E_v = \{e \in \mathcal E \hbox{ : } i(e)=v\}$ of edges   whose  initial vertex  is $v$; and
\item[(b)] 
 $\textbf{1} = (1)_{i=1}^\infty \in \ell^\infty$ is the vector all of whose entries are equal to $1$.
\end{enumerate} 
We can then formally   rewrite $\eta_{\mathcal G}(s,t)$ as
\begin{equation}
\eta_{\mathcal G}(s,t) 
=\sum_{p\in \P_v} e^{-s\ell(p) - \langle t, \overline{c}(p) \rangle} =
\textbf{w}(s,t)^T \Big(\sum_{n=0}^\infty  M_{s,t}^n\Big) \, \textbf{1} = 
\textbf{w}(s,t)^T \Big(1 - M_{s,t}\Big)^{-1} \,  \textbf{1}.
\end{equation}
Observe that for $\mathfrak{Re}(s) \ge  \epsilon$ we have 
$\textbf{w}(s,t) \in \ell^1$ for all $|t| \le \delta(\epsilon)$ (some constant depending on $\epsilon$).
In fact we can write 
\begin{equation}
\eta_{\mathcal G}(s,t)=\frac{\phi(s,t)}{\det(I-W_{s,t})}
\end{equation}
where $\phi(s,t)$ is bi-analytic on $\mathfrak{Re}(s)> \epsilon$, $|t| < \delta(\epsilon)$. It is not hard to see that $\phi(h,0)$ is a positive real number.

Choose $k$ large enough such that  $(I-D_{s,t})$ is invertible, on the half plane $\Re(s)\geq \epsilon$ where $\epsilon<h$.
The following was shown in \cite[Proposition 4.5]{Pollicott-Colognese}
\begin{proposition}
For each $s \neq h$ with $\Re(s) = h$ the matrix $W_{s,0}$ has spectral radius at most $1$ and does not have $1$ as an eigenvalue.
\end{proposition}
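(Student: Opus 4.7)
The plan is to identify the entries of $W_{s,0}$ combinatorially as sums over paths in $\G$, use this to dominate $|W_{s,0}|$ componentwise by the non-negative matrix $W_{h,0}$ on the line $\mathfrak{Re}(s)=h$, and then run a Wielandt / Perron-Frobenius argument whose equality case is ruled out by the non-lattice hypothesis (G3). Expanding the geometric series $(I-D_{s,0})^{-1} = \sum_{n\ge 0} D_{s,0}^n$ in the definition $W_{s,0} = A_{s,0} + B_{s,0}(I-D_{s,0})^{-1}C_{s,0}$ and unpacking the matrix products identifies
\[
W_{s,0}(i,j) = \sum_{p} e^{-s\ell(p)},
\]
where the sum ranges over all finite paths $p$ in $\G$ that begin with $e_i$, end with $e_j$, and whose intermediate edges all have index strictly greater than $k$. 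This series converges absolutely on $\mathfrak{Re}(s)\ge \epsilon$ by the same estimate that made $\|D_{s,0}\|<1$, so the triangle inequality yields the pointwise domination $|W_{s,0}(i,j)| \le W_{h,0}(i,j)$ whenever $\mathfrak{Re}(s)=h$.

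For the spectral radius claim I would invoke Wielandt's theorem: if $|B|\le A$ entrywise with $A$ non-negative then $\rho(B)\le\rho(A)$. Applied to $B=W_{s,0}$ and $A=W_{h,0}$, this reduces matters to showing $\rho(W_{h,0})=\lambda(h)=1$. But $h$ is by definition the abscissa of convergence of $\eta_\G(s,0) = \phi(s,0)/\det(I-W_{s,0})$, and $\phi(h,0)>0$, so the singularity at $s=h$ must come from $\det(I-W_{s,0})$ vanishing; combined with the monotonicity $\lambda'(s)<0$ already established, this forces $\lambda(h)=1$.

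For the non-eigenvalue claim, suppose for contradiction that $s=h+iy$ with $y\ne 0$ and that some non-zero $u\in\mathbb{C}^k$ satisfies $W_{s,0}u = u$. The domination bound gives $|u|\le W_{h,0}|u|$ componentwise. Pairing against the strictly positive left Perron eigenvector $v$ of $W_{h,0}$ produces $v^T|u| \le \lambda(h)\, v^T|u| = v^T|u|$, forcing $W_{h,0}|u|=|u|$; hence $|u|$ is a positive multiple of the Perron right eigenvector and all entries $|u_j|$ are strictly positive. Equality in the triangle inequality
\[
|u_i| = \Bigl|\sum_j W_{s,0}(i,j)u_j\Bigr| \le \sum_j W_{h,0}(i,j)|u_j|
\]
then forces, for each pair $i,j$ and each path $p$ from $e_i$ to $e_j$ contributing to $W_{s,0}(i,j)$, the quantity $e^{-iy\ell(p)} u_j/u_i$ to be a positive real constant independent of $p$. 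Telescoping these phase relations around any closed path $q$ in $\G$ (decomposed into $W$-segments, with (G2) guaranteeing connectivity between arbitrary edges through the first $k$ edges) yields $e^{-iy\ell(q)}=1$, so the set of lengths of closed paths lies in $(2\pi/|y|)\mathbb{Z}$; as these lengths are positive they in fact lie in $(2\pi/|y|)\mathbb{N}$, contradicting (G3). I expect the main technical hurdle to be this last telescoping step: carefully verifying that the phase constraints coming from equality in the triangle inequality for $W_{s,0}$ propagate to constraints on \emph{all} closed paths in the ambient graph $\G$, rather than only the restricted paths that appear in the combinatorial expansion of $W_{s,0}$.
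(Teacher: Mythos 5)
Your overall strategy is the right one, and it matches in spirit the argument behind the statement (which the paper itself does not reprove but quotes from \cite{Pollicott-Colognese}): expand $W_{s,0}(i,j)$ as a sum over paths from $e_i$ to $e_j$ whose intermediate edges have index greater than $k$, dominate by $W_{h,0}$, and run the equality case of Perron--Frobenius against (G3). Two steps need repair, however. First, your deduction of $\rho(W_{h,0})=1$ is incomplete: Landau's theorem does give a singularity of $\eta_\G(\cdot,0)$ at $s=h$, hence $\det(I-W_{h,0})=0$ and so $\lambda(h)\ge 1$; but monotonicity of $\lambda$ alone does not give the upper bound $\lambda(h)\le 1$. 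You need the complementary observation that for real $s>h$ the entries of $W_{s,0}^n$ are dominated by sums over paths beginning and ending with a fixed low-index edge, weighted by $e^{-s(\ell(p)-\ell(e_i))}$, and these sums converge by (G1), (G2) and the definition of $h$; hence $\rho(W_{s,0})\le 1$ for all real $s>h$, and continuity gives $\lambda(h)\le 1$, so $\lambda(h)=1$. (Note also the bookkeeping point that the initial edge is not weighted in the expansion of $W_{s,0}(i,j)$; this is harmless and in fact exactly what makes the phases telescope correctly below.)

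Second, the hurdle you flagged is a genuine gap as written: equality in the triangle inequality only produces phase relations along paths that start and end at edges of index at most $k$ with all intermediate edges of index greater than $k$, so the telescoping only shows that closed paths passing through at least one of the first $k$ edges have length in $(2\pi/|y|)\mathbb{Z}$. That does not yet contradict (G3), which concerns all closed paths; a closed path supported entirely on high-index edges is untouched. The fix is short. Given an arbitrary closed path $q$, use (G2) to produce a closed path $P$ through a fixed low-index edge $e_1$ which visits the initial vertex of $q$, and let $P'$ be $P$ with the loop $q$ inserted at that vertex, so that $\ell(P')=\ell(P)+\ell(q)$. Both $P$ and $P'$ contain $e_1$, hence both lengths lie in $(2\pi/|y|)\mathbb{Z}$, and therefore so does $\ell(q)$; being positive, $\ell(q)\in(2\pi/|y|)\mathbb{N}$, and since $q$ was arbitrary this contradicts (G3). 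With these two additions your proof is complete and follows the same route as the cited one.
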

In particular, since for small $t$ we have that $W_{s,t}$ is an analytic perturbation of $W_{s,0}$  we deduce the following.
\begin{proposition}\label{prop.perturb}
For each $s_0 \neq h$ with $\Re(s) \ge h$ there exist $\epsilon, \delta > 0$ such that the matrices $W_{s,t}$ for $|s-s_0| \le \epsilon, |t| \le \delta$ have spectral radius at most $1$ and do not have $1$ as an eigenvalue.
\end{proposition}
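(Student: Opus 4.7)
The plan is to split the argument into two cases according to whether $\Re(s_0) > h$ or $\Re(s_0) = h$. In each case I would first establish the conclusion at the base point $(s_0, 0)$, and then propagate it to a small bidisc in $(s,t)$ by exploiting continuous (in fact analytic) dependence of the spectrum of the finite $k \times k$ matrix $W_{s,t}$ on its entries. The two cases call for rather different inputs at the base point: entry-wise modulus domination in the first, and the cited Proposition~4.5 of \cite{Pollicott-Colognese} (together with aperiodicity assumption $(\text{G}3)$) in the second.

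For $\Re(s_0) > h$, I would use entry-wise modulus domination. From $|\overline c(e')| \le C\ell(e')$ one gets $|M_{s,t}(e,e')| \le M_{\Re(s)-C|t|,\,0}(e,e')$, and since $W_{s,t} = A_{s,t} + B_{s,t}(I-D_{s,t})^{-1}C_{s,t}$ is assembled from non-negative operations (the Neumann series $\sum_m D_{s,t}^m$ preserving the domination term by term), the same entry-wise bound transfers to $W_{s,t}$. Choosing $\epsilon, \delta$ small enough that $\Re(s)-C|t| > h$ throughout the bidisc, Wielandt's spectral radius inequality combined with Lemma~3.3 then gives
\[
\rho(W_{s,t}) \le \rho\bigl(W_{\Re(s)-C|t|,\,0}\bigr) = \lambda(\Re(s)-C|t|) < \lambda(h)=1,
\]
where $\lambda(h)=1$ is forced by the simple pole of $\eta_{\mathcal G}(s,0)$ at $s=h$. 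Both required conclusions follow.

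For $\Re(s_0)=h$ with $s_0 \neq h$, I would start from Proposition~4.5 of \cite{Pollicott-Colognese} applied at $(s_0,0)$. Since the entries of $W_{s,t}$ are bi-analytic in $(s,t)$ on a neighbourhood of $(s_0,0)$, the characteristic polynomial of the $k\times k$ matrix $W_{s,t}$ is analytic and its eigenvalues vary continuously in $(s,t)$. As $1$ lies a positive distance from the finite spectrum of $W_{s_0,0}$, a small enough bidisc about $(s_0,0)$ keeps $1$ out of the spectrum of $W_{s,t}$. For the spectral radius bound, one should upgrade Proposition~4.5 to the strict statement $\rho(W_{s_0,0}) < 1$, which follows from $(\text{G}3)$: in the chain $|W_{s_0,0}| \le W_{h,0}$, Wielandt's equality case would force all closed-path lengths to lie in a common lattice, contradicting $(\text{G}3)$. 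Continuity of the spectrum of a finite matrix then preserves $\rho(W_{s,t}) \le 1$ on a small enough bidisc.

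The main obstacle I foresee is precisely this spectral-radius control in the case $\Re(s_0)=h$. Entry-wise domination alone is inadequate there, because a perturbation in $t$ can push $\Re(s)-C|t|$ below $h$, and then $\lambda(\Re(s)-C|t|)>1$ so Wielandt gives no useful bound. One is compelled to use the strict subdominance $\rho(W_{s_0,0}) < 1$ at the base point — which is where $(\text{G}3)$ does its work — together with upper semicontinuity of the spectral radius, to propagate $\rho(W_{s,t}) \le 1$ uniformly on a bidisc.
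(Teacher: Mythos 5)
Your proposal is correct and follows essentially the same route as the paper, which deduces the result in one line from the cited Proposition 4.5 of \cite{Pollicott-Colognese} by observing that $W_{s,t}$ is an analytic perturbation of the finite matrix $W_{s,0}$. Your additional observation --- that propagating the bound $\rho(W_{s,t})\le 1$ off the base point requires the strict inequality $\rho(W_{s_0,0})<1$ (obtained from Wielandt's equality case together with $(\text{G}3)$), since upper semicontinuity of the spectral radius alone would not preserve a bound that is exactly $1$ --- is a genuine refinement that the paper's terse deduction elides.
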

Before we move on to the proof of our statistical limit laws we state the Tauberian Theorem  \cite[Theorem III]{delange}  that will be crucial for our proof.

\begin{proposition}[Delange Tauberian Theorem] \label{prop.tau}
For a monotone increasing function $\phi : \mathbb{R}_{>0} \to \mathbb{R}_{>0}$ we set
\[
f(s) = \int_0^{\infty} e^{-sT} \ d\phi(T).
\]
Suppose that there is $\delta > 0$ such that
\begin{enumerate}
\item $f(s)$ is analytic on $\{\mathfrak{Re}(s) \ge \delta\} \backslash \{\delta\}$; and,
\item  there are positive integers $n, k  \ge 1$, an open neighbourhood $U \ni \delta$, non-integer numbers $0 < \mu_1, \ldots, \mu_k < n$ and analytic maps $g,h, l_1, \ldots, l_k : U \to \mathbb{C}$ such that
\[
f(s) = \frac{g(s)}{(s-\delta)^n} + \sum_{j=1}^k \frac{l_j(s)}{(s-\delta)^{\mu_j}}+ h(s) \ \text{ for $s \in U$ and such that $g(\delta) > 0$}.
\]
\end{enumerate}
Then
\[
\phi(T) \sim \frac{g(\delta)}{(n-1)!} T^{n-1} e^{\delta T}
\]
as $T\to\infty$.
\end{proposition}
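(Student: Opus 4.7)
The statement is a standard Tauberian theorem of Delange, so the proposal is to reconstruct a proof along the classical lines pioneered by Wiener--Ikehara and Karamata, suitably extended to accommodate the extra non-integer singular terms. The monotonicity of $\phi$ is the key input that makes the Tauberian inversion possible at all.

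First, I would reduce to the case $\delta = 0$ by considering $\psi(T) = \phi(T) e^{-\delta T}$, whose Laplace--Stieltjes transform is $f(s+\delta)$. The goal becomes showing $\psi(T) \sim g(\delta) T^{n-1}/(n-1)!$. Then I would subtract off the principal singular contributions: note that the Laplace transform of $T^{n-1}/(n-1)!$ is $1/s^n$ and the Laplace transform of $T^{\mu_j - 1}/\Gamma(\mu_j)$ is $1/s^{\mu_j}$, so a natural candidate auxiliary function is
\[
\Psi(T) = \frac{g(\delta)}{(n-1)!} T^{n-1} + \sum_{j=1}^k \frac{l_j(\delta)}{\Gamma(\mu_j)} T^{\mu_j - 1}.
\]
Its Laplace transform matches the singular part of $f(s+\delta)$ at $s=0$ modulo terms that extend analytically across $\{\Re s = 0\}$. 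Hence the transform of $d(\psi - \Psi)$ admits an analytic continuation to an open neighbourhood of the closed half-plane $\{\Re s \ge 0\}$ (away from the controlled singularity at $s=0$).

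The second step is the Tauberian inversion itself. For the monotone function $\psi$, one applies the Karamata--Delange smoothing argument: convolve $\psi$ against a non-negative test function with compactly supported Fourier transform, pass the smoothed version through the Laplace inversion contour (the integrand now decays rapidly enough because of the test function), and sandwich $\psi$ between smoothed upper and lower approximants. Monotonicity of $\phi$ (hence of $\psi e^{\delta T}$) allows one to revert from the smoothed asymptotic to a pointwise one. The non-integer singularities are treated term-by-term, each contributing a subdominant $T^{\mu_j-1}$ piece because $\mu_j < n$.

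The main obstacle is justifying that the remainder $f(s+\delta) - \sum \text{(principal parts)}$, while analytic on a neighbourhood of $\{\Re s = 0\}$, can be integrated against the smoothing kernel along a vertical line. Here one exploits that after convolution with a Schwartz bump the effective integrand decays to zero at $\pm i\infty$, so local analyticity near every point of $\{\Re s = 0\}$ suffices by a compactness argument — one never needs quantitative vertical decay of $f$ itself. This is the replacement of Ikehara's original argument in the absence of a functional equation, and is precisely the device that lets the theorem apply to Dirichlet-type series with branch-point singularities in addition to poles.
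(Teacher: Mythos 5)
The paper does not prove this proposition at all: it is quoted verbatim as Theorem III of the cited work of Delange, so there is no internal argument to compare against. Your sketch reconstructs precisely the route that the cited source takes — the Wiener--Ikehara method generalised to higher-order and fractional singularities, with positivity of $d\phi$ (monotonicity) as the Tauberian hypothesis and a kernel whose Fourier transform has compact support so that only local analyticity on the line $\mathfrak{Re}(s)=\delta$ is needed — so in spirit your approach is the right, and indeed the standard, one. Two details of your write-up would need repair before it is a proof. First, the reduction to $\delta=0$ is misstated: the Laplace--Stieltjes transform of $\phi(T)e^{-\delta T}$ is not $f(s+\delta)$; the object with transform $f(s+\delta)$ is $\psi(T)=\int_0^T e^{-\delta u}\,d\phi(u)$, and converting an asymptotic for $\psi$ back into $\phi(T)\sim \frac{g(\delta)}{(n-1)!}T^{n-1}e^{\delta T}$ is itself a (standard, but necessary) monotonicity argument of Ikehara type, not a mere change of variables. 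Second, subtracting only the constant-coefficient terms $g(\delta)/s^{n}$ and $l_j(\delta)/s^{\mu_j}$ does not leave a remainder that extends analytically across the critical line: $(g(s)-g(\delta))/(s-\delta)^{n}$ still has a pole of order $n-1$, and $(l_j(s)-l_j(\delta))/(s-\delta)^{\mu_j}$ still has a branch singularity when $\mu_j>1$. One must subtract the full principal parts (Taylor-expanding $g$ and each $l_j$ at $\delta$, producing the whole descending family of integer and fractional powers) or iterate the subtraction; since all of the extra terms are of strictly lower order than $(s-\delta)^{-n}$, they only contribute $o(T^{n-1}e^{\delta T})$ and the stated leading term survives. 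With those corrections your outline matches Delange's argument.
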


In this result and throughout the rest of this work, for two functions $f, g : \R \to \R_{>0}$ we write $f(T) \sim g(T)$ as $T\to\infty$ if $f(T)/g(T) \to 1$ as $T\to\infty$.

We are now ready to move on to the proofs of our main results. By scaling $\ell$ we can and will always operate under the following assumption: \\

\noindent \textbf{Assumption:} we will always assume that the lengths of the edges of $\G$ have been normalised so that the exponential growth rate of $\#\P_v(T)$ is $h =1$.

\subsection{Law of large numbers}

We are now ready to prove our weak law of large numbers for graphs.

\begin{proof}[proof of Theorem \ref{thm.average}]
It suffices to prove this result for a single cost function (i.e. the one dimensional case). We will also assume, without loss of generality, that $c$ is strictly positive (since we can add a constant multiple of $\ell(\cdot)$ to guarantee this).
We define the two variable series
\[
\eta_\G(s,t) = \sum_{ p \in \P_v } e^{-s\ell(p) + tc(p)}
\]
for $s,t\in \C$.
Recall that
\[
\eta_\G(s,t) = \frac{\phi(s,t)}{\det(I -W_{s,t})}
\]
where $W_{s,t}$ is as defined in the previous section.
It follows from Proposition \ref{prop.perturb} that for any $s_0 \neq 1$ with  $\mathfrak{Re}(s_0) \ge 1$, there exist $\epsilon(s_0), \delta(s_0) > 0$ such that $\eta_\G$ is bi-analytic in $|s-s_0|\le \delta(s_0)$ and $|t| \le \epsilon(s_0)$.  For $(s,t)$ close to $(1,0)$ we can write 
\begin{equation}\label{eq.double}
\eta_\G(s,t) = \frac{\phi(s,t)}{(1-\lambda(s,t))}
\end{equation}
where $\phi(1,0) \in \R_{>0}$ and $\phi(s,t)$ is bi-analytic.
When $t =0$ we deduce that 
\[
\eta_\G(s,0) = \frac{  - \lambda_s(1,0)^{-1} \phi(1,0)}{s-1} + R(s)
\]
for $\Re(s) \ge 1$ where $R$ is analytic on this domain.
 It follows from Proposition \ref{prop.tau} (since $\lambda_s(1,0) < 0$ by Lemma \ref{lem.deriv}) that
\[
\#\P_v(T) \sim  -\lambda_s(1,0)^{-1} \phi(1,0) \, e^{T}
\]
as $T \to \infty$.

We now differentiate $(\ref{eq.double})$ with respect to $t$ at $t= 0$. Doing this shows that
\[
\sum_{p \in \P_v} c(p) e^{-s\ell(p)} =  \frac{d}{dt}\Big|_{t=0}\eta_\G(s,t) = \frac{ - \lambda_t(s,0) \phi(s,t)}{(1-\lambda(s,0))^2} + L(s)
\]
where $L$ is analytic on $\mathfrak{R}(s)\ge 1$ apart from a possible simple pole at $s=1$. This shows that the above series has an order $2$ pole at $s = 1$ since the derivative  $\lambda_t(1,0)$ is non-zero by Lemma \ref{lem.deriv}.

Now we have that the derivative of $\eta_\G$ has a pole of residue $\lambda_s(1,0)^{-2} \lambda_t(1,0) \varphi(1,0)$ at $s=1$. Applying Proposition \ref{prop.tau} gives that
\[
\sum_{\ell(p) < T} c(p) \sim \lambda_s(1,0)^{-2} \lambda_t(1,0) \varphi(1,0) \, T \, e^T
\]
as $T \to \infty$ or equivalently
\[
\lim_{T\to\infty} \frac{1}{\#P_v(T)}  \sum_{\ell(p)<T} \frac{c(p)}{T} = - \frac{\lambda_t(1,0)}{\lambda_s(1,0)} =: \Lambda(c)
\]
 as required. By Lemma \ref{lem.deriv}, $\Lambda(c) >0$ which proves the furthermore statement.
 \end{proof}


\subsection{Reparameterising the matrices}

To prove our central limit theorem and large deviation theorem it is convenient to work with a reparameterised version of the matrices $W_{s,t}$.
Let $\Lambda(\overline{c})$ be the asymptotic average for $\overline{c}$ that exists by Theorem \ref{thm.average}.
We define
\[
\widetilde{W}_{s,t} = W_{s + \langle t, \Lambda(\overline{c}) \rangle, t}.
\]
For $(s,t)$ close to $(1,0)$, $\widetilde{W}_{s,t}$ is an analytic perturbation of $W_{1,0}$ and so has an bi-analytically varying simple maximal eigenvalue $\widetilde{\lambda}(s,t)$. 
We can analogously define $\widetilde{B}_{s,t}, \widetilde{C}_{s,t}$ and $\widetilde{D}_{s,t}$ and
as in the previous section we can write $\widetilde{W}_{s,t} = \widetilde{A}_{s,t} + \widetilde{B}_{s,t}(I - \widetilde{D}_{s,t})^{-1} \widetilde{C}_{s,t}$. Note that for $(s,t)$ close to $(1,0)$, $\widetilde{W}_{s,t}$ has leading eigenvalue $\widetilde{\lambda}(s,t) = \lambda(s+\langle t, \Lambda(\overline{c})\rangle , t)$.

A simple calculation shows that
\begin{equation}\label{eq.non-zero}
\widetilde{\lambda}_{t_j}(1,0,\ldots,0) = 0 \ \text{ for each $j=1,\ldots,n$.}
\end{equation}

We also have the following result on positivity of the second derivatives. 

\begin{lemma}\label{lem.var}
Suppose that we are working with a single cost function, i.e. $t \in\R$.
Assume that there does not exist $\tau >0$ such that all periodic paths $p$ on the states $\{1,\ldots,k\}$ satisfy $c(p) = \tau \ell(p)$. 
If $k$ is sufficiently large then $\lambda(\widetilde{A})(s,t)$ has  a leading eigenvalue $\widetilde{A}_{s,t}$ on a neighbourhood of $(1,0)$ and there exists a constant $C >0$ such that 
\[
\widetilde{\lambda}_{tt}(1,0) \ge C \, \lambda(\widetilde{A})_{tt}(1,0) > 0.
\]
\end{lemma}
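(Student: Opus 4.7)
The plan is to prove the lemma in three steps: first establish that $\widetilde{A}_{s,t}$ admits a simple analytic leading eigenvalue near $(1,0)$; next deduce strict positivity of $\lambda(\widetilde{A})_{tt}(1,0)$ from thermodynamic formalism on the finite subshift; and finally derive the comparison $\widetilde{\lambda}_{tt}(1,0) \ge C \lambda(\widetilde{A})_{tt}(1,0)$ by observing that any cohomological obstruction on the finite subsystem persists on the full infinite system.

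For analyticity, I would invoke that for $k$ large enough assumption (G2) ensures that the finite matrix $\widetilde{A}_{s,t}$ is irreducible for $(s,t)$ near $(1,0)$, so by the Perron--Frobenius theorem and analytic perturbation theory, $\lambda(\widetilde{A})(s,t)$ exists as a simple bi-analytic leading eigenvalue near $(1,0)$.

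For positivity, I would use that $\widetilde{A}_{s,t} = A_{s+t\Lambda,\, t}$, so $\log \lambda(\widetilde{A})(1,t)$ is the topological pressure on the finite subshift defined by $A_{1,0}$ of the potential $-t(\Lambda \ell + c)$. Ruelle's variance formula then identifies $(\log \lambda(\widetilde{A}))_{tt}(1,0)$ with the asymptotic variance of $\Lambda \ell + c$ under the equilibrium state for $-\ell$ on the finite subshift, which vanishes exactly when $\Lambda \ell + c$ is cohomologous to a constant on closed paths of the subshift. The hypothesis of the lemma combined with the non-arithmeticity assumption (G3), which for large $k$ extends to the finite subsystem, rules out both the zero and non-zero cohomological constants, so this variance is strictly positive. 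Applying the identity $\lambda_{tt} = \lambda \cdot [(\log\lambda)_{tt} + (\log\lambda)_t^2]$ with $\lambda(\widetilde{A})(1,0) > 0$ then yields $\lambda(\widetilde{A})_{tt}(1,0) > 0$.

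For the comparison, an analogous variance interpretation shows that $\widetilde{\lambda}_{tt}(1,0)$ equals the asymptotic variance of $\Lambda \ell + c$ under the equilibrium state on the full infinite shift (using $\widetilde{\lambda}_t(1,0) = 0$). Since any cohomological relation on the full shift restricts to one on the finite subshift (an invariant subsystem), non-triviality of $\Lambda \ell + c$ on the subshift forces non-triviality on the full system, whence $\widetilde{\lambda}_{tt}(1,0) > 0$. Taking $C = \widetilde{\lambda}_{tt}(1,0)/\lambda(\widetilde{A})_{tt}(1,0)$ then yields the claimed inequality. The main obstacle I anticipate is carefully translating the lemma's hypothesis on $\tau$ into the precise cohomological condition on $\Lambda \ell + c$: in particular, handling the case of a non-zero cohomological constant requires invoking an appropriate form of (G3) on the truncated subsystem, and tracking the sign conventions in the reparameterisation to ensure that $\lambda(\widetilde{A})_{tt}$ corresponds to the variance of the correct linear combination.
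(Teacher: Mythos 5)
The decisive gap is in your second step, the strict positivity of $\lambda(\widetilde{A})_{tt}(1,0)$. On the truncated subshift the pressure function you invoke is $t\mapsto \log\lambda(\widetilde{A})(1,t)$, and its second derivative at $0$ is the asymptotic variance of $c-\Lambda\ell$ (up to the paper's sign conventions) with respect to the equilibrium state of $-\ell$ on the finite shift, taken in \emph{discrete} time. By the Livsic/variance characterisation this vanishes if and only if $c-\Lambda\ell$ is cohomologous to a constant \emph{per symbol}, i.e.\ there is $C\in\mathbb{R}$ with $c(p)-\Lambda\ell(p)=C\,n(p)$ for every loop $p$ in the first $k$ states, where $n(p)$ is the number of edges of $p$. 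The lemma's hypothesis only excludes relations of the form $c(p)=\tau\ell(p)$; it does not exclude $C\neq 0$. You flag this yourself, but your proposed repair --- that (G3) ``extends to the finite subsystem for large $k$'' --- is not true in general: (G3) concerns the whole graph, and if, say, all edge lengths are rational with unbounded denominators then (G3) holds while \emph{every} truncation has lattice-valued $\ell$ on its loops. Moreover, unlike on the full system, you cannot force the constant $C$ to vanish by a mean-zero argument, because $\Lambda$ is the asymptotic mean for the full system and the truncated equilibrium mean of $c-\Lambda\ell$ need not be zero. So the non-zero-constant case is genuinely open in your argument.

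Two further points. First, identifying $\widetilde{\lambda}_{tt}(1,0)$ with a variance ``under the equilibrium state on the full infinite shift'' is not available off the shelf: $\widetilde{\lambda}$ is the leading eigenvalue of the finite induced matrix $\widetilde{W}_{s,t}=\widetilde{A}_{s,t}+\widetilde{B}_{s,t}(I-\widetilde{D}_{s,t})^{-1}\widetilde{C}_{s,t}$, not a countable-state pressure, and the Hofbauer--Keller reduction exists precisely to avoid constructing equilibrium states, variance formulae and Livsic theory on the countable shift with unbounded $\ell$; these would all need justification. Second, taking $C=\widetilde{\lambda}_{tt}(1,0)/\lambda(\widetilde{A})_{tt}(1,0)$ renders the displayed inequality contentless, whereas in the paper the inequality is the mechanism: $\widetilde{\lambda}_{tt}(1,0)$ is expressed as $\lim_{n\to\infty}\frac{1}{n}$ of the second $t$-derivative of $\widetilde{W}_{s,t}^{\,n}$ sandwiched between the (strictly positive) Perron eigenvectors, this dominates the corresponding expression for $\widetilde{A}_{s,t}^{\,n}$ entrywise because every discarded path contributes a nonnegative term, and positivity of the eigenvectors supplies the uniform constant $C$; the finite-state Livsic theorem of Parry--Pollicott is then applied only to the truncated quantity. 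In your scheme the infinite-system positivity must be carried independently, which is exactly where the two issues above bite; restructuring along the paper's entrywise-comparison route avoids both.
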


\begin{proof}
Note that for all $k$ large,  $\widetilde{A}_{1,0}$ is irreducible by $(\text{G}2)$.  Further, if we take $k$ sufficiently large then there does not exist $\tau >0$ such that all periodic paths $p$ on the states $\{1,\ldots,k\}$ satisfy $c(p) = \tau \ell(p)$. 

A standard computation (see for example \cite{parry.tuncel}) shows that
\[
\widetilde{\lambda}_{tt}(1,0) = \lim_{n\to\infty} \frac{1}{n} u(1,0) (\widetilde{W}''_{1,0})^n v(1,0)
\]
where $u(s,t)$, $v(s,t)$ are the normalised left and right eigenvectors for $\widetilde{W}_{s,t}$ and  $\widetilde{W}''(1,0)$ has entries 
\[
\widetilde{W}''_{1,0}(i,j) = \frac{\partial^2 }{\partial t^2} \Bigg|_{(1,0)} \widetilde{W}_{s,t}(i,j).
\]
However we can write $\widetilde{W}_{s,t}^n = \widetilde{A}_{s,t}^n + M_n(s,t)$ where $M_n(s,t)$ is a sequence of analytically varying $k \times k$ matrices and $M_n(1,0)'' > 0$. Since $\widetilde{A}_{1,0}$ is irreducible it has leading real eigenvalue $\lambda(\widetilde{A})(1,0)$ and strictly positive left and right eigenvectors $u_{\widetilde{A}}(1), v_{\widetilde{A}}(1)$ we deduce that there exists $C> 0$ such that 
\[
\widetilde{\lambda}_{tt}(1,0) \ge C \, \lim_{n\to\infty} \frac{1}{n} u_{\widetilde{A}}(1)(\widetilde{A}_{1,0}'')^n v_{\widetilde{A}}(1).
\] 
Here $\widetilde{A}_{1,0}''$ is the component-wise second derivative with respect to $t$ (analogous to $\widetilde{W}_{1,0}''$ above).
Lastly, from the finite state Livsic Theorem \cite[Lemma 3.7]{ParryPollicott}, the right hand side of the above equation is $0$ if and only if for all loops $p$ in staying in the first $k$ symbols $c(p) = \tau \ell(p)$ for some $\tau >0$. Hence it is strictly positive by assumption and the proof is complete.
\end{proof}

We can upgrade this result to the multidimensional version.

\begin{lemma}\label{lem.secondderiv}
If $k$ is sufficiently large then the eigenvalues of $\widetilde{W}_{s,t}$ satisfying the following.
We have that $\widetilde{\lambda}_s(1,0,\ldots,0)  < 0$.  Lastly, if we define the matrix $\Sigma =(\sigma_{i,j})$ with entries 
\[
\sigma_{i,j} = -\frac{\widetilde{\lambda}_{t_it_j}(1,0,\ldots,0)}{\widetilde{\lambda}_s(1,0,\ldots,0)}
\]
 then $\Sigma$ is positive definite. The matrix $\Sigma$ is strictly positive definite if there does not exist non-zero  $v \in \R^n$ such that for all closed paths $p$, 
\[
\langle v, \overline{c}(p) - \Lambda(\overline{c})\ell(p)\rangle = 0.
\]
\end{lemma}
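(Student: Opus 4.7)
The first claim, $\widetilde{\lambda}_s(1,0,\ldots,0) < 0$, is immediate: the chain rule applied to the definition $\widetilde{\lambda}(s,t) = \lambda(s + \langle t, \Lambda(\overline{c})\rangle, t)$ gives $\widetilde{\lambda}_s(1,0,\ldots,0) = \lambda_s(1,0,\ldots,0)$, which is negative by Lemma \ref{lem.deriv}. For the (semi-)definiteness of $\Sigma$, my plan is to reduce to the one-dimensional Lemma \ref{lem.var} by slicing the multi-parameter perturbation in an arbitrary direction $v \in \R^n$.

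Fix $v \in \R^n$ and consider the scalar cost $c_v(e) = \langle v, \overline{c}(e) \rangle$. The matrix perturbed only by $c_v$ with scalar parameter $u$ coincides with $W_{s, uv}$, and by linearity of the mean formula in Theorem \ref{thm.average} we have $\Lambda(c_v) = \langle v, \Lambda(\overline{c})\rangle$. Hence the one-dimensional reparameterised leading eigenvalue satisfies $\widetilde{\lambda}^{(v)}(s, u) = \widetilde{\lambda}(s, uv)$, and differentiating twice in $u$ at $u = 0$ yields
\[
\widetilde{\lambda}^{(v)}_{uu}(1, 0) \;=\; \sum_{i,j} v_i v_j\, \widetilde{\lambda}_{t_i t_j}(1, 0, \ldots, 0) \;=\; -\,\widetilde{\lambda}_s(1, 0, \ldots, 0)\, \langle v, \Sigma v\rangle.
\]
Should $c_v$ fail to be positive, we may replace it by $c_v + C\ell$ for a sufficiently large constant $C > 0$; a short chain-rule computation shows that this shift leaves $\widetilde{\lambda}^{(v)}_{uu}(1, 0)$ unchanged. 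Applying Lemma \ref{lem.var} to this (now positive) cost, for any $k$ large enough that $\widetilde{A}_{1,0}$ is irreducible, gives $\widetilde{\lambda}^{(v)}_{uu}(1, 0) \geq 0$; combined with $\widetilde{\lambda}_s(1, 0, \ldots, 0) < 0$ we conclude $\langle v, \Sigma v\rangle \geq 0$ for every $v$, so $\Sigma$ is positive semidefinite. Crucially, the threshold for $k$ does not depend on $v$.

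For the strict positive definiteness, assume next the stated non-degeneracy. For each $k$, set
\[
V_k = \bigl\{ v \in \R^n : \exists\, \tau \in \R \text{ with } c_v(p) = \tau\, \ell(p) \text{ for every closed } p \text{ supported on the first $k$ edges}\bigr\}.
\]
Linearity of $c_v$ in $v$ makes each $V_k$ a linear subspace, and adding an edge only tightens the constraint, so the $V_k$ form a decreasing chain of subspaces of $\R^n$. Finite-dimensionality forces this chain to stabilise at some $V_\infty$; since every closed path in $\G$ uses only finitely many edges, $V_\infty$ coincides with the set of $v$ for which some $\tau$ works for every closed path of $\G$, and a brief averaging argument forces that $\tau$ to equal $\Lambda(c_v) = \langle v, \Lambda(\overline{c})\rangle$. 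The assumed non-degeneracy is thus exactly $V_\infty = \{0\}$, so for $k$ large enough $V_k = \{0\}$ and the one-dimensional non-degeneracy hypothesis of Lemma \ref{lem.var} holds for $c_v$ (after the harmless shift) for every non-zero $v$. That lemma then upgrades the previous inequality to $\widetilde{\lambda}^{(v)}_{uu}(1, 0) > 0$, giving $\langle v, \Sigma v\rangle > 0$ and strict positive definiteness of $\Sigma$. The main obstacle, resolved by the $V_k$ argument, is securing a single $k$ uniformly in $v \in \R^n$; finite-dimensionality of $\R^n$ is essential for this stabilisation.
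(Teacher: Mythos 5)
Your argument is correct and follows the same overall skeleton as the paper's: both proofs reduce the multidimensional statement to the scalar Lemma \ref{lem.var} by slicing in a direction $v$, using that $v^{T}\Sigma v$ is, up to the factor $-\widetilde{\lambda}_s(1,0,\ldots,0)>0$, the second derivative in $u$ of $u\mapsto\widetilde{\lambda}(1,uv)=\lambda(1+u\langle v,\Lambda(\overline{c})\rangle,uv)$. Where you genuinely differ is in how you secure a single truncation level $k$ working for every direction $v$: the paper covers the unit sphere by neighbourhoods on which a fixed witnessing closed path certifies non-degeneracy and invokes compactness, whereas you observe that the degeneracy sets $V_k$ are linear subspaces of $\R^n$ forming a decreasing chain, which must stabilise by finite-dimensionality, with $V_\infty=\{0\}$ being exactly the stated non-degeneracy (after the averaging identification $\tau=\Lambda(c_v)=\langle v,\Lambda(\overline{c})\rangle$, which you rightly flag and which the paper leaves implicit). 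Your chain argument is arguably cleaner and avoids the continuity/compactness bookkeeping; you are also more careful than the paper on two small points, namely shifting $c_v$ by $C\ell$ to meet the positivity/$\tau>0$ formulation of Lemma \ref{lem.var} (and checking the reparameterised eigenvalue, hence $\widetilde{\lambda}^{(v)}_{uu}$, is unchanged) and pinning down $\tau$. One minor imprecision: for the semidefiniteness step you cite Lemma \ref{lem.var} to get $\widetilde{\lambda}^{(v)}_{uu}(1,0)\ge 0$ for \emph{every} $v$, but that lemma only applies to directions whose sliced cost is non-degenerate on the truncated graph; for degenerate directions you should instead invoke the standard fact that the second derivative of the leading eigenvalue under such a linear perturbation is a variance (equivalently, convexity of $t\mapsto\log\lambda(s,t)$), which gives $\ge 0$ directly — an easy fix, and one the paper itself does not spell out either.
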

\begin{proof}
The first derivative being strictly negative is standard. For the latter part use that we need only check the condition for $v$ in the $n$-sphere in $\R^n$. By compactness of the $n$-sphere (and continuity of the map $v \mapsto \langle v, \overline{c}(p) - \Lambda(\overline{c})\ell(p)\rangle$), if there does not exist  non-zero $v \in \R^n$ such that for all closed paths $p$, 
\[
\langle v, \overline{c}(p) - \Lambda(\overline{c})\ell(p)\rangle = 0
\]
 then we can find $k$ large so that there does not exist non-zero $v \in \R^n$ such that
 \[
 \langle v, \overline{c}(p) - \Lambda(\overline{c})\ell(p)\rangle = 0 \ \text{ for all closed paths $p$ that remain in the states $\{1,\ldots, k\}$.}
 \]
 Since the value of $v^T \Sigma v$ is, up to scaling by $-\widetilde{\lambda}_s(1,0,\ldots,0)$, the second derivative with respect to $t$ and $(1,0)$ of 
 \[
\C^2 \ni (s,t) \mapsto \lambda(s+t \langle v, \Lambda(\overline{c} \rangle , t\langle v,\Lambda(\overline{c}) \rangle)
 \]
   we may apply Lemma \ref{lem.var} to deduce the result.
\end{proof}

Using these results and the reparameterisation above, we will operate under the following standing assumption in the proofs of the large deviation and central limit theorems.\\

\noindent \textbf{Assumption:} We will assume that the cost functions $\overline{c}$ have been normalised so that $\lambda_s(1,0) < 0, \lambda_{t_j}(1,0) = 0$ for all $j =1,\ldots, n$ and so that the conclusion on Lemma \ref{lem.secondderiv} holds for $\lambda(s,t)$.

\subsection{Large deviations}

We will deduce our large deviation theorem we use the following local G\"artner-Ellis type theorem.
\begin{lemma}\label{lem.localldp}
Let $Z_T$ be a one-parameter family of real random variables and $\mu_T$ be a one-parameter family of probability measures on a space $X$.  Suppose that
there exists $\eta>0$ with
\[
\lim_{T\to\infty} \frac{1}{T}\log \mathbb{E}_T(e^{tZ_T}) = c(t)
\]
for all $t \in [-\eta, \eta]$ where $\mathbb{E}_T$ represents the expectation with respect to $\mu_T$.
If $c$ is continuously differentiable and convex on $[-\eta,\eta]$ and $c^\prime(0)=0$ then for any $\epsilon >0$
\[
\limsup_{T\to\infty} \frac{1}{T}\log \mu_T(|Z_T|>T\epsilon) < 0.
\]

\end{lemma}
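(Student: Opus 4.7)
The plan is to run a standard Chernoff/Markov argument on each tail separately and then exploit the hypothesis $c'(0)=0$ to arrange that the optimizing exponent yields a strictly negative rate. Convexity enters only to guarantee that the optimization is unambiguous; the decay itself comes from the first-order behavior of $c$ at the origin.

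First I would reduce to the one-sided events. Write
\[
\mu_T(|Z_T|>T\epsilon)\le \mu_T(Z_T>T\epsilon)+\mu_T(Z_T<-T\epsilon),
\]
and note that if we prove each term decays exponentially with some positive rate, the same holds for the sum (since $\tfrac{1}{T}\log(a_T+b_T)\le \tfrac{1}{T}\log 2+\max\{\tfrac{1}{T}\log a_T,\tfrac{1}{T}\log b_T\}$). For the upper tail, for any $t\in(0,\eta]$ Markov's inequality gives
\[
\mu_T(Z_T>T\epsilon)=\mu_T\bigl(e^{tZ_T}>e^{tT\epsilon}\bigr)\le e^{-tT\epsilon}\,\mathbb{E}_T\bigl(e^{tZ_T}\bigr).
\]
Taking $\tfrac{1}{T}\log$, letting $T\to\infty$, and using the hypothesis on the moment generating function, I obtain
\[
\limsup_{T\to\infty}\frac{1}{T}\log \mu_T(Z_T>T\epsilon)\le -t\epsilon+c(t).
\]
For the lower tail I would apply the same bound with $t\in[-\eta,0)$, yielding
\[
\limsup_{T\to\infty}\frac{1}{T}\log \mu_T(Z_T<-T\epsilon)\le t\epsilon+c(t).
\]

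The remaining step, and the only place where care is needed, is to verify that both right-hand sides can be made strictly negative by an appropriate choice of $t$. Since $c(0)=\lim_{T\to\infty}\tfrac{1}{T}\log 1=0$ and $c$ is continuously differentiable with $c'(0)=0$, the function $\psi_+(t):=c(t)-t\epsilon$ satisfies $\psi_+(0)=0$ and $\psi_+'(0)=-\epsilon<0$. Hence there exists $t_*\in(0,\eta]$ with $\psi_+(t_*)<0$, which pins down a strictly negative bound for the upper tail. Symmetrically, $\psi_-(t):=c(t)+t\epsilon$ satisfies $\psi_-(0)=0$ and $\psi_-'(0)=\epsilon>0$, so there exists $t_{**}\in[-\eta,0)$ with $\psi_-(t_{**})<0$, handling the lower tail. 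Combining with the reduction above gives the conclusion.

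The main obstacle, if one wants to call it that, is simply recognizing that convexity is not used to drive the decay; the hypothesis $c'(0)=0$ together with $C^1$ regularity is exactly what makes $c(t)=o(t)$ near the origin and forces the Chernoff exponent to go strictly negative for all sufficiently small nonzero $t$. Convexity in the hypothesis is naturally present since the limit of normalized log moment generating functions is automatically convex, and it guarantees a well-defined local Legendre transform if one wished to state the sharp rate; however for the qualitative statement of exponential decay it is not invoked explicitly.
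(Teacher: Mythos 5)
Your argument is correct and is essentially the paper's proof: the same Markov/Chernoff bound $\mu_T(Z_T>T\epsilon)\le e^{-tT\epsilon}\,\mathbb{E}_T(e^{tZ_T})$, the same passage to $\limsup_{T}\frac{1}{T}\log\mu_T(Z_T>T\epsilon)\le -t\epsilon+c(t)$, and the same choice of a small $t$ making the exponent strictly negative, applied symmetrically to the lower tail. The only (cosmetic) difference is that you justify the negativity via $c(0)=0$, $c'(0)=0$ and $C^1$ regularity, whereas the paper cites convexity together with $c'(0)=0$; both reduce to the observation that $c(t)=o(t)$ near $0$.
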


\begin{proof}
We note that for $\epsilon >0$ and for any $ 0 < t < \eta$ we have that 
\[
\mu_T(Z_T > T \epsilon) \le \mathbb{E}_T(e^{t(Z_T - T\epsilon)}) = e^{-t T \epsilon} \, \mathbb{E}_T(e^{tZ_T}).
\]
Therefore we have that
\[
\limsup_{T\to\infty} \frac{1}{T} \log \mu_T(Z_T > T\epsilon) \le - t \epsilon + c(t).
\]
Since $c'(0) = 0$ and $c$ is convex, we can find $t_0 \in (0,\eta)$ such that $c(t_0) - t_0 \epsilon < 0$ and so we deduce that
\[
\limsup_{T\to\infty} \frac{1}{T} \log \mu_T(Z_T > T\epsilon) \le - t_0\epsilon + c(t_0) < 0.
\]
We can apply the same reasoning on the interval $[-\eta, 0)$ to get the exponential decay when $Z_T < T\epsilon$. The result follows.
\end{proof}

We also need the following asymptotic result. This can be seen as a weak version of the hypotheses of the Hwang Quasi-power Theorem mentioned in the introduction.

\begin{proposition}\label{prop.wqp}
There exist and open real neighbourhood $U \ni 0$, a real analytic function $\sigma: U \to \R_{> 0}$ such that the following holds. For any $t_0 \in U$ there is a positive constant $C_{t_0}$  such that
\[
\sum_{\ell(p)<T} e^{t_0 c(p)}  \sim C_{t_0} e^{\sigma(t_0) T}
\]
as $T\to\infty$. Furthermore $\sigma$ is convex on $U$.
\end{proposition}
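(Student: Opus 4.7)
\emph{Step 1: Define $\sigma$ via implicit function theorem.} Working near the point $(s,t) = (1,0)$, the leading eigenvalue $\lambda(s,-t)$ of $W_{s,-t}$ satisfies $\lambda(1,0) = 1$ and $\lambda_s(1,0) < 0$ by Lemma \ref{lem.deriv} (combined with the leading eigenvalue analysis). The implicit function theorem applied to the equation $\lambda(s,-t) = 1$ therefore produces a real analytic function $\sigma : U \to \mathbb{R}$ on an open neighborhood $U$ of $0$ with $\sigma(0) = 1$ and $\lambda(\sigma(t), -t) = 1$ for all $t \in U$. This $\sigma$ will be our candidate exponential growth rate.

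\emph{Step 2: Analytic structure of $\eta_\G(\cdot, t_0)$ near the pole.} Fix $t_0 \in U$. Using the factorisation
\[
\eta_\G(s, t_0) = \frac{\phi(s, -t_0)}{\det(I - W_{s, -t_0})}
\]
from Section 3.3, I decompose $\det(I - W_{s,-t_0}) = (1 - \lambda(s,-t_0))\, Q(s, -t_0)$, where $Q$ is the analytic factor coming from the other eigenvalues (nonvanishing near $s = \sigma(t_0)$ since $1$ is a simple eigenvalue). A Taylor expansion of $\lambda(s,-t_0)$ around $s = \sigma(t_0)$, using $\lambda_s(\sigma(t_0), -t_0) \neq 0$ (by continuity from $\lambda_s(1,0) < 0$), shows that $\eta_\G(\cdot, t_0)$ has a simple pole at $s = \sigma(t_0)$, with residue
\[
C_{t_0} = -\frac{\phi(\sigma(t_0), -t_0)}{\lambda_s(\sigma(t_0), -t_0)\, Q(\sigma(t_0), -t_0)},
\]
which is strictly positive for $t_0$ sufficiently small (it reduces to the positive residue from the proof of Theorem \ref{thm.average} at $t_0 = 0$).

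\emph{Step 3: Meromorphic extension to the full right half-plane.} This is the main technical step. I need to show that, for $t_0 \in U$ sufficiently close to $0$, $\eta_\G(\cdot, t_0)$ is analytic on $\{\mathfrak{Re}(s) \geq \sigma(t_0)\} \setminus \{\sigma(t_0)\}$. I would carry out the same meromorphic extension argument used in \cite{Pollicott-Colognese} for the unperturbed case $t = 0$, now in the presence of the small parameter $t_0$. The local ingredient is Proposition \ref{prop.perturb}, which provides, for each $s_0 \neq 1$ on the critical line, a neighborhood of $s_0$ and a neighborhood of $t = 0$ on which $W_{s,t}$ has no eigenvalue equal to $1$; combining this with a compactness argument on bounded portions of the line and the decay estimates on $D_{s,t}$ used in the proof of Lemma \ref{operator} handles the region $|\mathfrak{Im}(s)|$ large. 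The only pole in the closed right half-plane is the simple pole at $\sigma(t_0)$ constructed in Step 2. I expect this to be the main obstacle, as it requires carefully upgrading the unperturbed analytic continuation results from Section 3.3 and \cite{Pollicott-Colognese} to hold uniformly over a small range of $t$.

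\emph{Step 4: Delange's Tauberian theorem and convexity.} Applying Proposition \ref{prop.tau} to the nondecreasing function $\phi(T) = \sum_{\ell(p) < T} e^{t_0 c(p)}$, whose Mellin-Stieltjes transform is exactly $\eta_\G(s, t_0)$, with parameters $n = 1$, no fractional poles ($k = 0$), and $g(\sigma(t_0)) = C_{t_0} > 0$, yields
\[
\sum_{\ell(p) < T} e^{t_0 c(p)} \sim C_{t_0}\, e^{\sigma(t_0) T}
\]
as $T \to \infty$. For convexity of $\sigma$, I use H\"older's inequality: for $t_1, t_2 \in U$ and $a \in [0,1]$,
\[
\sum_{\ell(p) < T} e^{(at_1 + (1-a)t_2)c(p)} \leq \Bigl(\sum_{\ell(p) < T} e^{t_1 c(p)}\Bigr)^a \Bigl(\sum_{\ell(p) < T} e^{t_2 c(p)}\Bigr)^{1-a}.
\]
Taking logarithms, dividing by $T$, and letting $T \to \infty$ using the asymptotic just established gives $\sigma(at_1 + (1-a)t_2) \leq a \sigma(t_1) + (1-a) \sigma(t_2)$, establishing convexity of $\sigma$ on $U$.
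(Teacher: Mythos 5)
Your overall route is the same as the paper's: define $\sigma$ implicitly by $\lambda(\sigma(t),t)=1$ via the Implicit Function Theorem, exhibit $\eta_\G(\cdot,t_0)$ as having a simple pole with positive residue at $s=\sigma(t_0)$ and no other singularity on the closed half-plane $\mathfrak{Re}(s)\ge\sigma(t_0)$, and then invoke the Delange Tauberian theorem (Proposition \ref{prop.tau}). Your convexity argument is genuinely different and in fact more robust: the paper only computes $\sigma'(0)=0$ and $\sigma''(0)=-\lambda_{tt}(1,0)/\lambda_s(1,0)\ge 0$ and asserts convexity from this local information, whereas your H\"older-inequality argument gives convexity of the limiting free energy on all of $U$ directly from the asymptotics; either suffices for the subsequent application of Lemma \ref{lem.localldp}, which really only needs $\sigma'(0)=0$ and smoothness.

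There is, however, a gap in Step 3 as you have sketched it. For fixed real $t_0\neq 0$ the critical line is $\mathfrak{Re}(s)=\sigma(t_0)$, which need not be the line $\mathfrak{Re}(s)=1$ covered by Proposition \ref{prop.perturb}, and more importantly that proposition is perturbative: it gives, for each $s_0$, a $t$-neighbourhood whose size may shrink as $|\mathfrak{Im}(s_0)|\to\infty$, so a compactness argument only controls bounded portions of the line. The mechanism you propose for the remaining region, the decay estimates on $D_{s,t}$ from Lemma \ref{operator}, only shows that $\|D_{s,t}\|<1$, i.e.\ that $W_{s,t_0}$ is well defined; it gives no decay of the entries of the $k\times k$ matrix $W_{s,t_0}$ in $\mathfrak{Im}(s)$ and hence cannot rule out zeros of $\det(I-W_{s,t_0})$ at large imaginary parts (recall the paper stresses that poles do accumulate on the critical line, so no uniform-in-$\mathfrak{Im}(s)$ spectral gap is available). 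The correct argument, which is the one the paper uses, is non-perturbative and pointwise in $s$: for $s=\sigma(t_0)+ib$ with $b\neq 0$, the entrywise modulus of $W_{s,t_0}$ is the nonnegative irreducible matrix $W_{\sigma(t_0),t_0}$, whose leading eigenvalue is $1$ by the definition of $\sigma(t_0)$; if $1$ were an eigenvalue of $W_{s,t_0}$, the Wielandt/Perron--Frobenius comparison forces $b\,\ell(p)\in 2\pi\mathbb{Z}$ for all closed paths $p$ in the truncated system, i.e.\ $\ell$ takes values in a lattice, contradicting $(\text{G}3)$ exactly as in the case $t_0=0$ treated in \cite[Proposition 4.5]{Pollicott-Colognese}. (Analyticity strictly to the right of the line follows from monotonicity of the spectral radius under entrywise domination, since $\lambda(\mathfrak{Re}(s),t_0)<1$ there.) With this replacement for Step 3 your argument goes through; the remaining steps, including the positivity of the residue for small $t_0$ by continuity, match the paper.
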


\begin{proof}
We begin by applying the Implicit Function Theorem to find a real, open neighbourhood $U$ of $t=0$ and a real analytic function $\sigma : U \to \R$ such that $\lambda(\sigma(t),t) =1$ for all $t \in U$.

The Implicit Function Theorem also implies that
\[
\sigma'(0) =  - \frac{\lambda_t(1,0)}{\lambda_s(1,0)} = 0
\]
and 
\[
\sigma''(0)= - \frac{\lambda_{t t}(1,0)}{\lambda_s(1,0)} \ge 0
\]
by Lemma \ref{lem.deriv}.
In particular,  $\sigma$ is convex.
Then for any fixed $t_0\in U$ we can find $s_0 = \sigma(t_0)$ and a neighbourhood of $s_0$ such that
\begin{equation} \label{eq.g}
\eta_\G(s,t_0) = \frac{\phi(s,t_0)}{1-\lambda(s,t_0)}
\end{equation}
where $\phi$ is bi-analytic.
In particular, $\eta_\G(s,t_0)$ has a simple pole $s = s_0$ and we can check, that it has positive residue as in the proof of Theorem \ref{thm.average}. Furthermore, expression (\ref{eq.g})
shows us that $\eta_\G(s,t_0)$ does not have any other poles in the line $\Re(s) = s_0$. To see this note that the matrices $W_{s,t_0}$ can not have $1$ as an eigenvalue when $\Re(s) = s_0$ but $s \neq s_0$. This follows from precisely the same proof as the case $t_0 = 0$. Indeed if $1$ were an eigenvalue then this would imply that $\ell$ takes values in a lattice which contradicts hypothesis $(\text{G}3)$ (see the proof of \cite[Proposition 4.5]{Pollicott-Colognese}).

We also see that
\[
\eta_\G(s,t_0) =  \frac{\lambda_s(\sigma(t_0),t_0)^{-1} \phi(s,t_0)}{\sigma(t_0) - s} .
\]
Note that $\lambda_s(\sigma(t_0),t_0) > 0$ by the same argument as in Lemma \ref{lem.deriv}.
 To summaries we have shown that $\eta_\G(s,t_0) $ is analytic in a neighbourhood of $\Re(s) \ge s_0$ apart from a simple pole with positive residue at $s = s_0$.
 
Then we see by Proposition \ref{prop.tau} that there exists a positive constant $C_{t_0}$ depending on $t_0$ such that
\[
\sum_{\ell(p)< T} e^{t_0 c(p)}  \sim C_{t_0} e^{\sigma(t_0) T}
\]
as $T \to\infty$. 
\end{proof}

We are now ready to prove our large deviation theorem.

\begin{proof}[proof of Theorem \ref{thm.ldt}]
It suffices to prove the theorem when $n=1$.
Combining the above results shows that there is a neighbourhood  $U$ of $0$ and an analytic function $\sigma : U \to\R$  such that if $t \in U$ then 
\[
\lim_{T \to\infty} \frac{1}{T} \log \left( \frac{1}{\#\P_v(T)} \sum_{p \in \P_v(T) } e^{t c(p)} \right) = \sigma(t) - \sigma(0).
\] 
We can then apply Lemma \ref{lem.localldp} (note that $\sigma''(0) \ge 0$) to deduce that
\[
\limsup_{T \to\infty} \frac{1}{T} \log \left(  \mu_T\left\{ p \in \P_v, \, \left| \frac{c(p)}{T} - \Lambda(c) \right| > \epsilon \right\} \right) < 0.
\]
Here the random variables are $Z_T(p) = c(p)$ if $\ell(p) < T$ and $Z_T(p) = 0$ otherwise and the measures $\mu_T$ are the uniform counting measures on $\P_v(T)$. This concludes the proof.
\end{proof}
\begin{remark}
It is possible to improve Theorem \ref{thm.ldt} to a local large deviation principle, i.e. we can quantify the precise exponential decay rate in the large deviation theorem, at least for small $\epsilon > 0$. 
\end{remark}

\subsection{Central limit theorems}

Before moving on to the main result of this section, we establish some notation.
Given $\widehat{q} =(q_1,\ldots,q_n) \in \N^n$ we will write $|\widehat{q}| = q_1 +\cdots+q_n$. 
We will use $\varphi_1, \ldots, \varphi_n$ to denote the re-scaled cost functions $\varphi_j(p) = c_j(p) - \lambda(c_j) \ell(p)$.

The next proposition is based on a careful study of the following series
\[
\eta_{\w{q}}(s) = \sum_{p \in \mathcal{P}_v} \varphi_1(p)^{q_1}\cdots \, \varphi_n(p)^{q_n} e^{-s\ell(p)}.
\]
We will write
\[
\sigma_{i,j} = -\frac{\lambda_{t_it_j}(1,0,\ldots,0)}{\lambda_s(1,0,\ldots,0)} \ \ \text{ for $i,j = 1,\ldots,n$}
\]
as in Lemma \ref{lem.secondderiv}.
\begin{proposition}\label{prop.analyticity}
Given $\w{q} \in \mathbb{N}^n$ the function $\eta_{\widehat{p}}(s)$ is analytic in the plane $\mathfrak{R}(s) > 1$. Furthermore, $\eta_{\w{q}}(s)$ is analytic on $\mathfrak{R}(s) \ge 1$ apart from at $s = 1$. Moreover,  the nature  of the singularity at  $1$ depends on the  parity of $q = |\w{q}|$ as follows:\\
\noindent {\rm Case 1: $q$ is odd.} Then $\eta_{\w{q}}(s)$ has a possible finite, integer order poles at $s =1$ and is analytic otherwise. These poles have order at most $(q+1)/2$.\\
\noindent {\rm Case 2: $q$ is even}. In this case, there exists a positive definite, symmetric matrix $ \Sigma = (\sigma_{i,j})_{i,j=1}^n$
such that the following holds. For $s$ in a neighbourhood of $1$ we can write
\[
\eta_{\w{q}}(s) = \frac{R_{\w{q}}(s)}{(s-1)^{1+ \frac{q}{2}}}
\]
where each $R_{\w{q}}(s)$ is analytic and
\[
R_{\w{q}}(1) = C\left(\frac{q}{2}\right)! \sum_{i_1, \ldots, i_{q} } \sigma_{l(i_1),l(i_2)} \sigma_{l(i_3),l(i_4)} \cdots \, \sigma_{l(i_{q-1}), l(i_{q})}
\]
where:
\begin{enumerate}
\item
the sum over $i_1, \ldots, i_{q}$ is over the partition of the numbers $1,\ldots, q$ into disjoint pairs labelled $(i_1,i_2), \ldots, (i_{q-1},i_{q})$; and,
\item $l : \{1, \ldots, q \} \to \{1, \ldots,q\}$ sends the set $\{1,\ldots,q_1\}$ to $1$,  the set $\{q_1+1, \ldots, q_1+q_2\}$ to $2$ and continues in this way until finally sending $\{q_1+\cdots + q_{n-1} + 1, \ldots, q\}$ to $n$.
\end{enumerate}
\end{proposition}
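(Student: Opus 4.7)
The plan is to realise $\eta_{\widehat{q}}(s)$ as a mixed partial derivative at $t=0$ of the two-variable function $\eta_{\mathcal{G}}(s,t)$ and then analyse its local singular structure at $s=1$ using Weierstrass preparation followed by Faà di Bruno's formula. Under the standing assumption (so $\Lambda(\overline{c}) = 0$ and $\varphi_j = c_j$), absolute convergence of the series for $\eta_{\mathcal{G}}(s,t)$ on $\{\Re(s) > 1\} \times \{|t| \le \delta\}$ lets me differentiate termwise to obtain
\[
\eta_{\widehat{q}}(s) = (-1)^q \frac{\partial^{q}}{\partial t_1^{q_1}\cdots \partial t_n^{q_n}}\, \eta_{\mathcal{G}}(s,t)\Big|_{t=0}.
\]
This immediately gives analyticity on $\{\Re(s) > 1\}$. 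For any $s_0 \neq 1$ with $\Re(s_0) \ge 1$, Proposition \ref{prop.perturb} combined with the representation $\eta_{\mathcal{G}}(s,t) = \phi(s,t)/\det(I - W_{s,t})$ yields bi-analyticity of $\eta_{\mathcal{G}}$ in a neighbourhood of $(s_0,0)$, so $\eta_{\widehat{q}}$ is analytic near $s_0$.

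Near $(s,t)=(1,0)$ I would exploit Weierstrass preparation. Since $\lambda(1,0)=1$ and $\lambda_s(1,0)\neq 0$, one obtains analytic $\sigma$ and $g$ with $\sigma(0)=1$, $g(1,0)=-\lambda_s(1,0)>0$, and
\[
1 - \lambda(s,t) = (s-\sigma(t))\, g(s,t).
\]
Setting $\psi = \phi/g$ gives the local factorisation $\eta_{\mathcal{G}}(s,t) = \psi(s,t)/(s-\sigma(t))$ with $\psi(1,0) = \phi(1,0)/(-\lambda_s(1,0)) > 0$. Implicit differentiation of $\lambda(\sigma(t),t)=1$, using the normalisation $\lambda_{t_j}(1,0)=0$, yields $\sigma_{t_j}(0) = 0$ and $\sigma_{t_i t_j}(0) = \sigma_{i,j}$.

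Next I would apply Faà di Bruno to $f(\sigma(t))$ with $f(\tau) = (s-\tau)^{-1}$, giving
\[
\frac{\partial^{q}}{\partial t_{j_1}\cdots\partial t_{j_q}}\frac{1}{s-\sigma(t)}\bigg|_{t=0} = \sum_\pi \frac{|\pi|!}{(s-1)^{|\pi|+1}} \prod_{B \in \pi} (\partial^B \sigma)(0),
\]
where the sum runs over set partitions of $\{1,\ldots,q\}$. Any partition containing a singleton contributes zero, so $|\pi| \le \lfloor q/2 \rfloor$ and the pole order at $s=1$ is at most $\lfloor q/2\rfloor + 1$. Leibniz applied to $\psi \cdot (s-\sigma)^{-1}$ only demotes derivatives from $(s-\sigma)^{-1}$ to $\psi$, strictly lowering the pole order, so the same bound holds for $\eta_{\widehat{q}}$. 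For $q$ odd, perfect matchings do not exist and the best attainable is $|\pi|=(q-1)/2$ (one block of size three, the rest pairs), giving pole order at most $(q+1)/2$. For $q$ even the maximum $|\pi|=q/2$ is realised precisely by perfect matchings, and each pair $\{a,b\}$ contributes $\sigma_{l(a),l(b)}$. Collecting leading terms produces
\[
\eta_{\widehat{q}}(s) = \frac{\psi(1,0)\,(q/2)!}{(s-1)^{q/2+1}}\sum_\pi \prod_{\{a,b\}\in\pi}\sigma_{l(a),l(b)} + (\text{lower order poles}),
\]
so $R_{\widehat{q}}(1) = C\,(q/2)!\sum_\pi \prod \sigma_{l(\cdot),l(\cdot)}$ with $C = \psi(1,0) > 0$, matching the statement.

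The main obstacle will be executing the Faà di Bruno bookkeeping cleanly: one must verify that every Leibniz cross term in which at least one derivative lands on $\psi$ genuinely produces a strictly lower-order pole, and one must reconcile the sum over unordered perfect matchings with the labelled pair notation $(i_1,i_2),\ldots,(i_{q-1},i_q)$ used in the statement. Identifying $\sigma_{t_it_j}(0)$ with the already-defined entries $\sigma_{i,j}$ of $\Sigma$ (positive definite by Lemma \ref{lem.secondderiv}) closes the argument.
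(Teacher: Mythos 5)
Your proposal is correct and follows essentially the same route as the paper: both realise $\eta_{\w{q}}$ as a mixed $t$-derivative of $\eta_{\G}(s,t)$, use Proposition \ref{prop.perturb} for analyticity away from $s=1$, and apply Fa\`a di Bruno together with the normalisation $\lambda_{t_j}(1,0)=0$ so that only partitions without singleton blocks contribute to the leading pole, with perfect matchings producing the coefficient $C(q/2)!\sum \sigma_{l(i_1),l(i_2)}\cdots\sigma_{l(i_{q-1}),l(i_q)}$ via Lemma \ref{lem.secondderiv}. Your only deviation is the Weierstrass preparation $1-\lambda(s,t)=(s-\sigma(t))g(s,t)$, which is a slightly cleaner bookkeeping device than the paper's direct expansion of $1/(1-\lambda(s,t))$, since singleton-block terms then vanish identically (as $\sigma_{t_j}(0)=0$) rather than merely contributing lower-order poles.
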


\begin{proof}[Proof of Proposition \ref{prop.analyticity}]
We first show that $\eta_{\w{q}}$ is analytic at $\mathfrak{Re}(s) \ge 1$ other than $s=1$. To do so we recall that 
\[
\eta_\G(s,t) = \frac{\phi(s,t)}{\det(I -W_{s,t})}.
\]
Using Proposition \ref{prop.perturb} we see that $\eta_\G(s,t)$ has the right domain of analyticity and we can differentiate to get what we need. Studying the pole is the harder part. For $(s,t)$ close to $(1,0,0,\ldots,0)$ we can write 
\[
\eta_\G(s,t) = \frac{ F(s,t)}{1-\lambda(s,t)} + R(s,t)
\]
where $F(s,t), R(s,t)$ are multi-analytic in a neighbourhood of $(1,0)$. It is not hard to see that $F(1,0) \in \R_{>0}$. 
We now  want to study the  partial derivatives
\[
\left. \frac{\partial^k}{\partial t_1^{k_1} \partial t_2^{k_2} \cdots \, \partial t_n^{k_n}} \right|_{(s,0)}  \frac{F(s,t)}{1-\lambda(s,t)}
\]
for $\w{k} = (k_1, k_2,\ldots,k_n) \in \N^n$ with $|\w{k}| = k_1 + k_2 + \cdots + k_n =k$.
Since we are only interested in the largest order poles, it suffices to study
\[
\left. \frac{\partial^k}{\partial t_1^{k_1} \partial t_2^{k_2} \cdots \, \partial t_n^{k_n}} \right|_{(s,0)}  \frac{1}{1-\lambda(s,t)}  \text{ where $k_1+k_2 + \cdots + k_n= k$.}
\]
Using Fa\`a di Bruno's formula \cite{Faa} we have that these derivatives are given by
\[
\sum_{\pi \in \Pi_k} \frac{|\pi|!}{(1-\lambda(s,0))^{|\pi|+1}} \prod_{B \in \pi} \lambda_B(s,0)
\]
where
\begin{enumerate}
\item $\Pi_k$ is the set of partitions of $\{1,\ldots,k\}$;
\item $|\pi|$ is the number of blocks in $\pi$;
\item in the product, $B$ runs over the blocks in $\pi$; and,
\item $\lambda_B(s,0)$ is the partial derivative of $\lambda$ over the block $B$: if $B = \{ b_1,\ldots, b_l\}\subset \{ 1, \ldots, k\} $ then 
\[
\lambda_B(s,0) = \frac{\partial^l \lambda}{\partial t_1^{l_1} \partial t_2^{l_2} \cdots  \, \partial t_n^{l_n}} (s,0) , \text{ where }
\]
\[
l_1 = \# (B \cap \{ 1,\ldots, k_1\}),
l_2= \# (B \cap \{ k_1 + 1, \ldots, k_1+k_2\}), \ldots, 
l_n = \#(B \cap \{k_1 + \cdots + k_{n-1} +1, \ldots, k \}.
\]
\end{enumerate}
The key point to notice is that, by our assumption that  $\lambda_{t_j}(1,0) = 0$ for each $j =1,\ldots, n$, for any block $B$ of length $1$, $\lambda_B(1,0) = 0$. Hence, the Fa\`a di Bruni formula shows that when we are searching for the poles of highest order we can ignore terms coming from partitions that contain blocks of a single number. It therefore follows that when $k$ is odd, the highest order pole coming from the derivative above has order at most $(k - 3)/2 + 2  = (k+1)/2$. When $k$ is even the highest order poles come from partitions of $\{ 1,\ldots, k\}$ into pairs. We deduce that in this case the highest order poles are of the form
\[
\frac{(k/2)!}{(1-\lambda(s,0))^{k/2 + 1}} \sum_{\pi \in \Pi_k(2)} \prod_{B \in \pi} \lambda_B(s,0)
\]
where $\Pi_k(2)$ represents all partitions of $\{1,\ldots, k\}$ into blocks of size $2$. Using Lemma \ref{lem.secondderiv} we can then write
\[
\frac{(k/2)!}{(1-\lambda(s,0))^{k/2}} \sum_{\pi \in \Pi_k(2)} \prod_{B \in \pi} \lambda_B(s,0) = \frac{(-\lambda_s(1,0))^{k/2}(k/2)!}{(1-\lambda(s,0))^{k/2 +1}} \sum_{i_1,\ldots, i_k} \sigma_{l(i_1),l(i_2)}\cdots \sigma_{l(i_{k-1}),l(i_{k})} + g(s)
\]
where $g(s)$ has poles of integer orders strictly less that $k/2 + 1$ and $l : \{1, \ldots, k \} \to \{1,\ldots,k\}$ sends the set $\{1,\ldots,k_1\}$ to $1$,  the set $\{k_1+1, \ldots, k_1+k_2\}$ to $2$ and continues in this way until finally sending $\{k_1+\cdots + k_{n-1} + 1, \ldots, k\}$ to $n$ (as in the statement of the proposition).

To conclude the proof we sum over $0 \le k_1 \le q_1, 0 \le k_2 \le q_2$. To see that when $|\w{q}|$ is odd, $\eta_{\w{q}}$ has the required pole structure and when $|\w{q}| = q$ is even
\[
\eta_{\w{q}}(s) =  \frac{F(s,0)(-\lambda_s(1,0))^{q/2}(q/2)!}{(1-\lambda(s,0))^{q/2 +1}} \sum_{i_1,\ldots, i_p} \sigma_{l(i_1),l(i_2)}\cdots \sigma_{l(i_{q-1}),l(i_{q})} + G(s)
\]
where $G$ is analytic other than integer order poles of order at most $q/2$. To conclude we note that
\[
\frac{F(1,0)(-\lambda_s(1,0))^{q/2}(q/2)!}{(1-\lambda(s,0))^{q/2 +1}} = \frac{F(1,0)(q/2)!}{(-\lambda_s(1,0)) (s-1)^{q/2 + 1}} + H(s)
\]
where $H(s)$ is analytic on $\mathfrak{R}(s) \ge 1$ except for finite integer order poles at $s=1$ of order at most $q/2$.
This concludes the proof with
\[
C = - \frac{F(1,0)}{\lambda_s(1,0)} > 0 \ \text{ and the } \ \sigma_{i,j}\ \text{ as defined above.}
\]
\end{proof}

\subsection{Deducing the Central Limit Theorem}
In this section we will employ the estimates on the complex function described in the previous section 
to deduce the central limit theorem.  The approach is to use the method of moments, following a strategy inspired by Morris \cite{morris}.  

\begin{definition}
For each pair $\w{q} =(q_1,\ldots, q_n) \in \N^n$ we define
\[
\pi_{\w{q}}(T) = \sum_{p \in \P_v(T)} \varphi_{\w{q}}(p) \ \text{ where } \varphi_{\w{q}}(p) = \varphi_1^{q_1}(p)\cdots \, \varphi_n^{q_n}(p).
\]
\end{definition}

We can now use Proposition \ref{prop.tau} in the proof of the following moment estimate.

\begin{proposition}
When $|\w{q}| = q$ is even we have that
\[
\lim_{T\to\infty} \frac{1}{\#\P_v(T)} \sum_{p \in \P_v(T)} \left(\frac{\varphi_{\w{q}}(p)}{\sqrt{T}}\right)^q = \sum_{i_1, \ldots, i_{p} } \sigma_{\pi(i_1),\pi(i_2)} \sigma_{\pi(i_3),\pi(i_4)} \cdots \sigma_{\pi(i_{q-1}), \pi(i_{q})}.
\]
\end{proposition}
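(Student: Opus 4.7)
The plan is to apply the Delange Tauberian theorem (Proposition \ref{prop.tau}), which requires a monotone summand. Since $\varphi_{\w{q}}(p)=\prod_j\varphi_j(p)^{q_j}$ can take either sign (when some $q_i$ is odd), the first step is a polarization: for each $t=(t_1,\ldots,t_n)\in\R^n$ set $\tilde\varphi_t(p):=\sum_j t_j\varphi_j(p)$ and
\[
\tilde\pi_t(T):=\sum_{\ell(p)<T}\tilde\varphi_t(p)^q.
\]
Because $q$ is even, $\tilde\varphi_t(p)^q\ge 0$ and $T\mapsto\tilde\pi_t(T)$ is monotone non-decreasing. The multinomial expansion
\[
\tilde\pi_t(T)=\sum_{|\w{k}|=q}\binom{q}{\w{k}}t^{\w{k}}\pi_{\w{k}}(T)
\]
exhibits $\tilde\pi_t(T)$ as a polynomial in $t$ of degree $q$ whose coefficients are (up to multinomial factors) the individual mixed moments we are after.

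The associated Dirichlet series decomposes as $\tilde\eta_t(s)=\sum_{|\w{k}|=q}\binom{q}{\w{k}}t^{\w{k}}\eta_{\w{k}}(s)$. By Proposition \ref{prop.analyticity}, $\tilde\eta_t(s)$ is analytic on $\{\Re(s)\ge 1\}\setminus\{1\}$ and admits, in a neighbourhood of $s=1$, the representation $\tilde R_t(s)/(s-1)^{1+q/2}$ with
\[
\tilde R_t(1)=C\,(q/2)!\sum_{|\w{k}|=q}\binom{q}{\w{k}}t^{\w{k}}\sum_{\text{pairings}}\sigma_{l_{\w{k}}(i_1),l_{\w{k}}(i_2)}\cdots\sigma_{l_{\w{k}}(i_{q-1}),l_{\w{k}}(i_q)}.
\]
A short bookkeeping calculation (equivalently, Isserlis' identity for a Gaussian with covariance $\Sigma=(\sigma_{i,j})$) collapses this to $\tilde R_t(1)=C\,(q/2)!\,(q-1)!!\,(t^T\Sigma t)^{q/2}$, which is strictly positive precisely when $t^T\Sigma t>0$. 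For any such $t$, Proposition \ref{prop.tau} with $\delta=1$, $n=1+q/2$ yields $\tilde\pi_t(T)\sim \tilde R_t(1)\,T^{q/2}e^T/(q/2)!$. Dividing by $\#\P_v(T)\sim Ce^T$ (from the proof of Theorem \ref{thm.average}; the constant $C=-\phi(1,0)/\lambda_s(1,0)$ is the same one appearing in Proposition \ref{prop.analyticity}) gives
\[
\frac{\tilde\pi_t(T)}{\#\P_v(T)\,T^{q/2}}\;\longrightarrow\;\sum_{|\w{k}|=q}\binom{q}{\w{k}}t^{\w{k}}\sum_{\text{pairings}}\sigma_{l_{\w{k}}(i_1),l_{\w{k}}(i_2)}\cdots\sigma_{l_{\w{k}}(i_{q-1}),l_{\w{k}}(i_q)}.
\]

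Both sides are polynomials in $t$ of degree $q$ in $n$ variables. Pointwise convergence on the open dense set $\{t:t^T\Sigma t>0\}$ forces convergence of all coefficients, since the evaluation map from degree-$q$ polynomials in $n$ variables to values at any fixed set of $\binom{n+q}{n}$ points in general position is a continuous linear isomorphism. Matching the coefficient of $t^{\w{q}}$ on both sides and cancelling the factor $\binom{q}{\w{q}}$ produces the claimed identity. The degenerate case $\Sigma=0$ is handled separately: all second-order $\lambda$-derivatives at $(1,0)$ vanish, and the Faà di Bruno bookkeeping in the proof of Proposition \ref{prop.analyticity} shows that the pole order of every $\eta_{\w{k}}(s)$ at $s=1$ strictly drops below $1+q/2$, forcing $\pi_{\w{q}}(T)=o(T^{q/2}e^T)$, which matches the vanishing right-hand side.

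The main obstacle is precisely the potential non-monotonicity of $\pi_{\w{q}}(T)$ when some $q_i$ is odd, which blocks a direct application of Proposition \ref{prop.tau}. The polarization device bypasses this by replacing the product $\prod_j\varphi_j^{q_j}$ by the $q$-th power of a single scalar functional (which is non-negative since $q$ is even), and then recovering the individual mixed moments via polynomial interpolation in the auxiliary parameter $t$. A subsidiary point is the verification that the constant $C$ from the asymptotic $\#\P_v(T)\sim Ce^T$ coincides with the one in the leading residue of each $\eta_{\w{k}}$, which ensures the combinatorial factors line up cleanly in the final identification.
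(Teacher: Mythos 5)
Your proposal is correct, but it takes a genuinely different route from the paper. Faced with the same obstacle -- that $\varphi_{\widehat{q}}(p)$ is not sign-definite when some $q_i$ is odd, so the Delange Tauberian theorem cannot be applied to $\pi_{\widehat{q}}$ directly -- the paper proceeds by a completing-the-square device: it introduces the auxiliary series built from $\bigl(\varphi_{\widehat{q}}(p)+\ell(p)^{q/2}\bigr)^2$ and $\varphi_{2\widehat{q}}(p)+\ell(p)^{q}$ (whose summands are non-negative), identifies the cross term $\sum_p \ell(p)^{q/2}\varphi_{\widehat{q}}(p)e^{-s\ell(p)}$ with a derivative of $\eta_{\widehat{q}}$ (splitting into sub-cases according to the parity of $q/2$), applies the Tauberian theorem to the monotone pieces, and finally strips off the weight $\ell(p)^{q/2}$ by a Stieltjes integration-by-parts estimate using the elementary inequality bounding $|\pi_{\widehat{q}}|$ by even-exponent moments. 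Your polarization argument replaces all of this: you pass to the scalar functional $\tilde\varphi_t=\sum_j t_j\varphi_j$, whose $q$-th power is automatically non-negative, apply the Tauberian theorem once for each $t$ with $t^{T}\Sigma t>0$, collapse the leading residue via the Isserlis/Wick identity, and recover the individual mixed moments by identifying coefficients of the polynomial in $t$. This avoids both the parity sub-cases and the weight-removal step, and it makes the combinatorial constant transparent; the price is that you must (and correctly do) track that the constant in $\#\mathcal{P}_v(T)\sim Ce^{T}$ agrees with the one in the residues of Proposition \ref{prop.analyticity}. One point you should tighten is the degenerate case: your assertion that a reduced pole order "forces $\pi_{\widehat{q}}(T)=o(T^{q/2}e^{T})$" is not immediate, since $\pi_{\widehat{q}}$ (and Delange's theorem) require monotone summands and a positive leading coefficient; the standard repair, entirely within your framework, is to apply the Tauberian theorem to $\tilde\pi_t(T)+B\sum_{\ell(p)<T}\ell(p)^{q/2}$ (whose transform has a pole of exact order $1+q/2$ with positive leading term) and subtract the known asymptotics of the added series -- the same comparison trick the paper itself uses. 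With that small addition your argument is complete and, in my view, cleaner than the paper's.
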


\begin{proof}
When all of the $q_j$ are even we can apply Proposition \ref{prop.tau} immediately to deduce the result.
When some of the $q_j$ are odd we have to work harder. There are a further $2$ sub-cases. \\

\noindent 
\textit{Sub-case $1$: $q/2$ is even.} When this is the case we define
\begin{align*}
G_1(s) &= \sum_{p \in \P_v} \left(\varphi_{2\w{q}}(p) + \ell(p)^q\right) e^{-s\ell(p)}\\
G_2(s) &= \sum_{p \in \P_v} \left(\varphi_{\w{q}}(p)+ \ell(p)^{q/2}\right)^2 e^{-s\ell(p)}\\
G_3(s) &= \sum_{p \in \P_v} \ell(p)^{q/2} \varphi_{\w{q}}(p) e^{-s\ell(p)}.
\end{align*}
We note that $G_2 = G_1 + 2G_3$.
Using Proposition \ref{prop.tau} in combination with Proposition \ref{prop.analyticity} we see that
\[
\frac{1}{\#\P_v(T)}\sum_{\ell(p) < T} \varphi_{2\w{q}}(p) + \ell(p)^q \sim \frac{T^q\left( R_{2\w{q}}(1) + R_{0,0}(1)(q!/(q/2)!)\right)}{Cq!}
\]
as $T\to\infty$. We also have that (since $q/2$ is even)
\[
\eta_{\w{q}}^{(q/2)}(s) = \sum_{p \in \P_v} \ell(p)^{q/2} \varphi_{\w{q}}(p)e^{-s\ell(p)} = G_3(p).
\]
Then using that $G_2 = G_1 + 2G_3$ we see that
\[
G_2(s) = \frac{R_{2\w{q}}(1) + R_{0,0}(1)(q!/(q/2)!) + 2 R_{\w{q}}(1)(q!/(q/2)!)}{(s-\lambda)^{1+q}} + f(s)
\]
where $f(s)$ is analytic, other than integer poles of order at most $q$. Therefore
\[
\frac{1}{\#\P_v(T)} \sum_{p \in \P_v(T)} \left(\varphi_{\w{q}}(p)+ \ell(p)^{q/2}\right)^2 
\]
grows asymptotically like
\[
\frac{R_{2\w{q}}(1) + R_{0,0}(1)(q!/(q/2)!) + 2 R_{q_1,q_2}(1)(q!/(q/2)!)}{C q!} \, T^q
\]
as $T\to\infty$. This implies that
\[
\frac{1}{\#\P_v(T)}\sum_{p \in \P_v(T)} \ell(p)^{q/2} \varphi_{\w{q}}(p)\sim \frac{R_{\w{q}}(1) T^{p}}{C(q/2)!} \ \text{ as $T\to\infty$.}
\]
We want the same expression but with $\ell(p)^{q/2}$ replaced by $T^{q/2}$. We now remove this weighting term. Note that
\[
\sum_{p \in \P_v(T)} \ell(p)^{q/2} \varphi_{\w{q}}(p) = \int_0^T t^{q/2} \ d\pi_{\w{q}}(t) = T^{q/2}\pi_{\w{q}}(T) - \frac{q}{2} \int_0^T t^{q/2 -1} \pi_{\w{q}}(t) \ dt.	
\]
Without loss of generality we can assume that $q_1,\ldots,q_{2k}$ are odd and $q_{2k}+1, \ldots, q_n$ are even for some $k \le \lfloor q/2 \rfloor$.

We now use the elementary inequality for real numbers $x_1,\ldots,x_{2k}$,
\[
\sum_{B \subset \{1,\ldots,2k\}, |B| = k} x_B^2 \ge |x_1 \cdots x_{2k}|
\]
where for $ B =\{b_1,\ldots, b_k\}  \subset \{1,\ldots,2k\}$ we set $x_B = x_{b_1}\cdots \, x_{b_k}$.

Using this we see that
\[
|\pi_{\w{q}}(t)| \le \sum_{B \subset \{1,\ldots,2k\}, |B| = k} \pi_{B(\w{q})}(t) 
\]
where $B(\w{q}) \in \N^n$ is the vector with entries $q_j$ for $j \nin \{1,\ldots,2k\}$, $q_j + 1$ if $j \in B \cap \{1,\ldots,2k\}$ and $q_j-1$ if $j \in  \{1,\ldots,2k\} \backslash B$.
Since all of the entries in $B(\w{q})$ are even and $|B(\w{q})| = |\w{q}| = q$ we can apply Proposition \ref{prop.tau} to deduce that
\[
\sum_{B \subset \{1,\ldots,2k\}, |B| = k} \pi_{B(\w{q})}(t)  = O(t^{q/2}e^t)
\]
as $t\to\infty$. It therefore follows that
\[
\int_0^T t^{q/2 -1} \pi_{q_1,q_2}(t) \ dt= o\left( T^{q/2} e^T\right)
\]
as $T\to\infty$.
This implies the required asymptotic in this sub-case.\\

\noindent
\textit{Sub-case $2$: $q/2$ is odd.} In this case we define
\begin{align*}
H_1(s) &= \sum_{p \in \P_v} \left(\varphi_{2\w{q}}(p)+ \ell(p)^q\right) e^{-s\ell(p)}\\
H_2(s) &= \sum_{p \in \P_v} \left(\varphi_{\w{q}}(p) + \ell(p)^{q/2}\right)^2 e^{-s\ell(p)}\\
H_3(s) &= \sum_{p \in \P_v} -\ell(p)^{q/2} \varphi_{\w{q}}(p) e^{-s\ell(p)}
\end{align*}
This time we have that $H_2 = H_1 - 2 H_3$. Following the same argument as before, we deduce that
\[
\frac{1}{\#\P_v(T)}\sum_{p \in \P_v(T)} \ell(p)^{q/2} \varphi_{\w{q}}(p)\sim \frac{R_{q_1,q_2}(1) T^{p}}{C(q/2)!} 
\]
as $T\to\infty$. Again, we can use the Stiltjes integral as before to change this expression into the required asymptotic expression.

\end{proof}

We now handle the odd sum case.

\begin{proposition}
When $q = |\w{q}|$ is odd we have that
\[
\lim_{T\to\infty} \frac{1}{\#\P_v(T)} \sum_{p \in \P_v(T)} \left(\frac{\varphi_{\w{q}}(p)}{\sqrt{T}}\right)^q = 0.
\]
\end{proposition}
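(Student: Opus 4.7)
The plan is to derive the statement as a corollary of convergence of the joint moment generating function of the rescaled vector $Y_T := (\varphi_1(p)/\sqrt T, \ldots, \varphi_n(p)/\sqrt T)$ under $\mu_T$ to that of the centred Gaussian $N(0, \Sigma)$. The first ingredient is a multi-dimensional extension of Proposition \ref{prop.wqp}: combining the bi-analytic pole analysis of $\eta_{\mathcal G}(s,t)$ established in Section \ref{sec.graph} (notably Proposition \ref{prop.perturb}) with the implicit function theorem, one shows that for $t$ in a real neighbourhood $U \subset \mathbb R^n$ of the origin,
\[
\sum_{\ell(p)<T} e^{\langle t,\varphi(p)\rangle} \;\sim\; C(t)\, e^{\sigma(t) T} \qquad (T\to\infty),
\]
where $\sigma : U \to \mathbb R$ is real analytic with $\sigma(0)=1$, $\nabla\sigma(0)=0$ (from the re-parameterisation performed after Lemma \ref{lem.secondderiv}), and Hessian $\sigma''(0)=\Sigma$. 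Moreover the asymptotic holds uniformly on compact subsets of $U$, by tracking the continuous parameter dependence of the constants in Proposition \ref{prop.tau} (Delange's Tauberian theorem) through the bi-analytic data.

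Specialising with $t \mapsto t/\sqrt T$ and Taylor expanding gives $\sigma(t/\sqrt T)\, T = T + \tfrac12 \langle t,\Sigma t\rangle + o(1)$; combined with $C(t/\sqrt T) \to C(0)$ and $\#\mathcal P_v(T) \sim C(0) e^T$ we obtain
\[
\mathbb E_{\mu_T}\!\bigl[ e^{\langle t, Y_T\rangle}\bigr] \;=\; \frac{\sum_{\ell(p)<T} e^{\langle t,\varphi(p)\rangle/\sqrt T}}{\#\mathcal P_v(T)} \;\longrightarrow\; e^{\frac12 \langle t,\Sigma t\rangle}
\]
for each fixed $t$ in a neighbourhood of the origin. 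This is the moment generating function of $N(0,\Sigma)$, and pointwise convergence of MGFs in a real neighbourhood of $0$ implies convergence of all joint moments of $Y_T$ to those of $N(0,\Sigma)$ (by differentiating the limit identity at $t=0$, using analyticity to upgrade the convergence to a complex neighbourhood). Specialising to $|\widehat q| = q$ odd, the Gaussian mixed moment vanishes, so $\mathbb E_{\mu_T}[\varphi_{\widehat q}(p)/T^{q/2}] \to 0$ as required.

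The main obstacle is the first step: the uniform-in-$t$ multi-dimensional extension of Proposition \ref{prop.wqp}. The bi-analytic infrastructure is already available, but one has to verify that the constants and error terms in Delange's Tauberian theorem depend continuously on the parameters in a neighbourhood of $0$. A pragmatic alternative, closer in spirit to the even-case proof, is to combine the Cauchy--Schwarz bound $|\pi_{\widehat q}(T)| \le \sqrt{\#\mathcal P_v(T)\cdot \pi_{2\widehat q}(T)} = O(T^{q/2} e^T)$ (obtained from the preceding proposition applied with $2\widehat q$) with a subsequence/tightness argument: the family $\{Y_T\}$ is tight, any weak subsequential limit matches the Gaussian even moments by the even case, and the MGF computation above identifies the subsequential limit as $N(0,\Sigma)$, whose odd mixed moments vanish.
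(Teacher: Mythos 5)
Your main route has a genuine gap at exactly the point you flag, and it is not a routine verification: to pass from (a multi-dimensional) Proposition \ref{prop.wqp} to convergence of $\mathbb{E}_{\mu_T}[e^{\langle t, Y_T\rangle}]$ you must substitute $t \mapsto t/\sqrt{T}$, so you need the asymptotic $\sum_{\ell(p)<T} e^{\langle u,\varphi(p)\rangle} \sim C(u)e^{\sigma(u)T}$ to hold \emph{uniformly} for $u$ in a neighbourhood of $0$ shrinking with $T$ --- that is, a quasi-power expression with a uniform error term as in (\ref{eq.mgf}). Delange's Tauberian theorem (Proposition \ref{prop.tau}) gives no rate and no parameter dependence, and producing such uniformity would require a uniform pole-free region and uniform bounds on $(I-M_{s,t})^{-1}$ near the critical line, i.e.\ precisely the Dolgopyat-type estimates that the paper explains fail in this setting (the poles of the Poincar\'e series accumulate at $\Re(s)=h$); the entire moment-method strategy of this section exists to avoid that input. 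Your fallback does not close the gap either: Cauchy--Schwarz against the even case gives only $\pi_{\widehat q}(T)=O(T^{q/2}e^{T})$, i.e.\ boundedness, not vanishing, of the normalised odd moment; tightness plus agreement of the even moments does not identify a subsequential weak limit nor force its odd moments to vanish (a non-symmetric law can share all even moments with the Gaussian while having non-zero odd moments, e.g.\ perturb the Gaussian density by a small odd function); and the step that is supposed to pin the limit down is again the unproven MGF convergence, while invoking Theorem \ref{thm.mclt} instead would be circular, since that theorem is itself deduced from this proposition.

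For contrast, the paper's proof never leaves the fixed parameter $t=0$: it introduces $K_1,K_2,K_3$ with $K_2=K_1+2K_3$, controls $K_3(s)=\sum_{p}\ell(p)^{q/2}\varphi_{\widehat q}(p)e^{-s\ell(p)}$ by writing it as a fractional integral of the derivative $\eta_{\widehat q}^{((q+1)/2)}$ via the identity $\int_0^\infty t^{-1/2}e^{-tx}\,dt=\sqrt{\pi}\,x^{-1/2}$, applies Delange's theorem --- which is stated so as to allow the resulting half-integer pole orders --- to obtain $\sum_{\ell(p)<T}\ell(p)^{q/2}\varphi_{\widehat q}(p)=o(T^{q}e^{T})$, and finally strips off the weight $\ell(p)^{q/2}$ by a Stieltjes integration by parts. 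None of this needs uniform control in an auxiliary parameter, which is exactly why it succeeds where the MGF route stalls; if you want to salvage your approach you would have to supply the uniform quasi-power estimate, which in this setting is an open obstruction rather than a continuity check.
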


\begin{proof}
As before we define
\begin{align*}
K_1(s) &= \sum_{p \in \P_v} \left(\varphi_{2\w{q}}(p) + \ell(p)^q\right) e^{-s\ell(p)}\\
K_2(s) &= \sum_{p \in \P_v} \left(\varphi_{\w{q}}(p)+ \ell(p)^{q/2}\right)^2 e^{-s\ell(p)}\\
K_3(s) &= \sum_{p \in \P_v} \ell(p)^{q/2} \varphi_{\w{q}}(p) e^{-s\ell(p)}
\end{align*}
and note that $K_2 = K_1 + 2K_3$ and 
\[
K_3(s) = \eta_{2\w{q}}(s) - \eta_0^{(p)}(s) = \frac{g(s)}{(s-1)^{1+q}} + f(s)
\]
where $g(1) > 0$ and $f,g$ are analytic.
By Proposition \ref{prop.tau} we deduce that
\[
\frac{1}{\#\P_v(T)} \sum_{\ell(p)<T} \varphi_{2\w{q}}(p)+ \ell(p)^q \sim \frac{g(1) T^q}{q!}
\]
as $T\to\infty$. We now calculate
\[
\eta_{\w{q}}^{\left( \frac{p+1}{2}\right)}(s) = (-1)^{\left(\frac{p+1}{2}\right)} \sum_{\ell(p) < T} \ell(p)^{\left( \frac{p+1}{2}\right)} \varphi_{\w{q}}(p) e^{-s\ell(p)}.
\]
Now using the identity
\[
\int_0^\infty t^{-1/2} e^{-tx} \ dt = \sqrt{\pi} x^{-1/2}
\]
for $x > 0$ we see that
\[
K_3(s) = \frac{(-1)^{\frac{p+1}{2}}}{\sqrt{\pi}} \int_0^\infty \frac{\eta_{\w{q}}^{\left( \frac{p+1}{2}\right)}(s+t)}{\sqrt{t}} \ dt
\]
and hence $K_3$ is analytic except for a pole of order at most $p+1$ at $s=1$. Now, we can write
\[
\eta_{\w{q}}^{\left( \frac{p+1}{2}\right)}(s) = \sum_{j=1}^{p+1} \frac{a_j}{(s-1)^j} + h(s)
\]
where $a_j \in \C$, $h$ is analytic in $\mathfrak{R}(s) \ge 1$. Then using the identity
\[
\int_0^\infty \frac{1}{(s+t - 1)^j \sqrt{t}} \ dt = \frac{\pi (2j -2)!}{2^{2j-1}(j-1)!^2} \frac{1}{(s-1)^{k - \frac{1}{2}}}
\]
(which follows by integration by parts) we deduce that
\[
K_3(s) = \sum_{j=0}^{p+1} \frac{c_j}{(s-1)^{j-\frac{1}{2}}} + l(s)
\]
where $c_j \in \C$ and $l$ is analytic in the half plane. Then using that $k_2 = K_1 + 2K_2$ we deduce that
\[
K_2(s) = \sum_{j=0}^{p+1} \frac{a_j}{(s-1)^j} + \frac{b_j}{(s-1)^{j-\frac{1}{2}}} + r(s)
\]
and $a_{p+1}(1) = g(1)$. Proposition \ref{prop.tau} now implies that
\[
\frac{1}{\#\P_v(T)} \sum_{\ell(p) < T} \left( \varphi_{\w{q}}(p)+ \ell(p)^{q/2} \right)^2  \sim \frac{g(1) T^q}{p}
\]
as $T\to\infty$ and so
\[
\frac{1}{\#\P_v(T)} \sum_{p \in \P_v(T)} \ell(p)^{q/2} \varphi_{\w{q}}(p)= o(T^q)
\]
as $T \to\infty$.
To conclude the proof we need to remove the weighting term $\ell(p)^{q/2}$. To do so we set
\[
\phi(t) = t^{-q/2} \ \text{ and } \ \wt{\pi}(t) = \sum_{p \in \P_v(t)} \ell(p)^{q/2} \ell(p)^{q/2} \varphi_{\w{q}}(p)
\]
for $t>0$ and note that by the above $\wt{\pi}(t)$ is $O(t^qe^t)$. It follows that
\[
\int_0^T \wt{\pi}(t) \phi'(t) \ dt = O( T \cdot T^q e^T T^{-1-q/2}) = O(T^{q/2} e^T)
\]
as $T\to\infty$.
However we also have that
\begin{align*}
\int_1^T \wt{\pi}(t) \phi'(t) \ dt &= \int_1^T\sum_{\ell(p) < T} \varphi_{\w{q}}(p) \ell(p)^{q/2} \phi'(t) \ dt + O(1) \\
&= \sum_{\ell(p) < T} \varphi_{\w{q}}(p) \ell(p)^{q/2} \int_{\ell(p)}^T \phi'(t)  dt\\
 &= T^{-q/2} \wt{\pi}(T) - \pi_{\w{q}}(T) + O(1)
\end{align*}
as $T\to\infty$. Rearranging this and using our estimates above gives the required result.
\end{proof}

We are now ready to deduce Theorem \ref{thm.mclt}

\begin{proof}[Proof of Theorem \ref{thm.mclt}]
To conclude the proof we apply the method of moments: \cite[Theorem 30.2]{billingsley}. Some justification is needed here. Indeed, the method of moments is usually used to prove one-dimensional central limit theorems for discrete sequences of distributions. However, by Theorem 29.4 in \cite{billingsley},  we can use the method of moments in higher dimensions. Lastly we note that it is possible to deduce the continuous limit theorem (i.e. as $T\to\infty$ through the reals) case from the discrete limit case as $n\to\infty$ through the natural numbers. We leave this simple deduction to the reader.
Finally we note that the non-degeneracy criteria follows from Lemma \ref{lem.secondderiv}.
\end{proof}

Before we move onto translation surfaces we prove Proposition \ref{prop.independent}.
\begin{proof}[Proof of Proposition \ref{prop.independent}]
We note that the mean $\Lambda$ and covariance matrix $\Sigma$ are obtained from the first and second derivatives of $\lambda(s,t)$. Since $\lambda(s,t)$ is independent of the choice of starting vertex, the result follows.
\end{proof}


\section{Translation surfaces}\label{sec.translation}

We recall the definitions from the introduction.
\begin{definition}
A translation surface $X$ is a compact surface with a flat metric
except at a finite set $\mathcal{X}= \{x_1, \cdots, x_n\}$ of singular points with cone angles $2\pi (k(x_i)+1)$, where $k(x_i) \in \mathbb N$, for $i=1, \ldots, n$.
\end{definition}

\begin{example}
This is a translation surface of genus $3$ with four singularities each with cone angle $4\pi $.

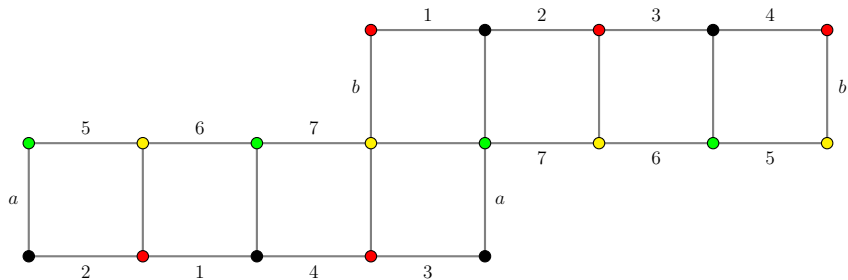
\begin{figure}[h!]
\begin{tikzpicture}[scale=0.5, every node/.style={scale=0.6}]
\draw[gray, thick] (0,0)--(12,0);
\draw[gray, thick] (0,3)--(21,3);
\draw[gray, thick] (9,6)--(21,6);
\draw[gray, thick] (0,0)--(0,3);
\draw[gray, thick] (3,0)--(3,3);
\draw[gray, thick] (6,0)--(6,3);
\draw[gray, thick] (9,0)--(9,3);
\draw[gray, thick] (12,0)--(12,3);
\draw[gray, thick] (15,3)--(15,6);
\draw[gray, thick] (18,3)--(18,6);
\draw[gray, thick] (21,3)--(21,6);
\draw[gray, thick] (12,3)--(12,6);
\draw[gray, thick] (9,3)--(9,6);
\draw [fill=black] (0,0) circle [radius=0.15];
\draw [fill=red] (3,0) circle [radius=0.15];
\draw [fill=black] (6,0) circle [radius=0.15];
\draw [fill=red] (9,0) circle [radius=0.15];
\draw [fill=black] (12,0) circle [radius=0.15];
\draw [fill=red] (9,6) circle [radius=0.15];
\draw [fill=black] (12,6) circle [radius=0.15];
\draw [fill=red] (15,6) circle [radius=0.15];
\draw [fill=black] (18,6) circle [radius=0.15];
\draw [fill=red] (21,6) circle [radius=0.15];
\draw [fill=green] (0,3) circle [radius=0.15];
\draw [fill=green] (6,3) circle [radius=0.15];
\draw [fill=green] (12,3) circle [radius=0.15];
\draw [fill=green] (18,3) circle [radius=0.15];
\draw [fill=yellow] (3,3) circle [radius=0.15];
\draw [fill=yellow] (9,3) circle [radius=0.15];
\draw [fill=yellow] (15,3) circle [radius=0.15];
\draw [fill=yellow] (21,3) circle [radius=0.15];
\node at (1.5,-0.4) {$2$};
\node at (4.5,-0.4) {$1$};
\node at (7.5,-0.4) {$4$};
\node at (10.5,-0.4) {$3$};
\node at (1.5,3.4) {$5$};
\node at (4.5,3.4) {$6$};
\node at (7.5,3.4) {$7$};
\node at (13.5,2.6) {$7$};
\node at (16.5,2.6) {$6$};
\node at (19.5,2.6) {$5$};
\node at (10.5,6.4) {$1$};
\node at (13.5,6.4) {$2$};
\node at (16.5,6.4) {$3$};
\node at (19.5,6.4) {$4$};
\node at (-0.4,1.5) {$a$};
\node at (12.4,1.5) {$a$};
\node at (8.6,4.5) {$b$};
\node at (21.4,4.5) {$b$};
\end{tikzpicture}
\caption{ A translation surface with $4$ singularities.}
\end{figure}
\end{example}

A path which does not pass through singularities is a locally distance minimising geodesic if it is a straight line segment. This includes local geodesics which start and end at singularities, known as {\it saddle connections}.  Geodesics can change direction if they go through a singular point.

More precisely, a locally minimising, length $R$ geodesic on a translation surface $X$ with singularity set $\mathcal{X}$ is a curve 
$\gamma:[0,R]\rightarrow X$ satisfying the following conditions:  
\begin{enumerate}[(i)]
\item there exist $0\leq t_1<...<t_n \leq R$, where $n\geq 0$, such that $\gamma(t_i)\in \mathcal{X}$;
\item for $t_i<t<t_{i+1}$ $\gamma(t)\in X\backslash \mathcal{X}$ for $i=1,\ldots, n-1$;
\item $\gamma:(t_i,t_{i+1}) \rightarrow X\backslash \mathcal{X}$ is a geodesic segment (possibly a saddle connection); and,
\item 
the smallest angle between $\gamma|_{(t_{i-1},t_i)}$ and $\gamma|_{(t_i,t_{i+1})}$ is at least $\pi$.
\end{enumerate}

Let ${\mathcal{S}}=\{s_1,s_2,...\}$ be the set of oriented saddle connections  ordered by non decreasing lengths.   

\begin{definition}
We define a saddle connection path $p
= (s_{i_1},...,s_{i_n})$  to be a finite string of oriented saddle collections $s_{i_1},...,s_{i_n}$ which form a local geodesic path.
\end{definition}

We denote by $\ell(p) = \ell(s_1)+\ell(s_2)+\cdots +\ell(s_n)$ the sum of the lengths of the constituent saddle connections.
We let $i(p), t(p) \in \mathcal{X}$
denote the initial and terminal singularities, respectively, of the 
saddle connection path  $p$. 

\begin{definition}
Suppose that $\varphi : \mathcal{S}  \to \R$ is a function that assigns to each singularity $s$ a real number $\varphi(s)$. Assume further that there exists $C > 0$ such that  $|\varphi(s)| \le C \ell(s)$ for all $s\in \mathcal{S}$. Then we extend $\varphi$ to saddle connection paths in the obvious way: if $p = (s_{i_1}, \ldots, s_{i_n})$ then we set
\[
\varphi(p) = \varphi(s_{i_1}) + \cdots + \varphi(s_{i_n}).
\]
We call such functions \textit{saddle cost functions}.
\end{definition}

To apply our result for graphs to translation surfaces we need analogues of conditions (G1), (G2), (G3) for translation surfaces: \\ 

\noindent \textbf{Translation Hypotheses}
\begin{enumerate}
\item[(T1)]
for all $t > 0$ we have $\sum_{s \in \mathcal{S}} e^{-t \ell(s)} < \infty$; 
\item[(T2)] there exists a constant $C >0$ such that for any directed saddle connections $s$ and $s'$ there exists a saddle connection path with length less than $C + \ell(s) + \ell(s')$ that begins with $s$ and ending with $s'$; and
\item[(T3)] there does not exist a
$d > 0$ such that 
\[
\{ \ell (c) : c \hbox{ is a closed saddle connection path} \} \subset d \mathbb N.
\]
\end{enumerate}
We claim that the above hypotheses hold for all translation surfaces.

\begin{proof}
Property $(\text{T}1)$ follows from the fact that the number of saddle connections of length less than $T$ grows quadratically in $T$ \cite{masur}.  The second property $(\text{T}2)$ was established in \cite[Proposition 3.11]{dankwart}. Lastly, $(\text{T}3)$ is an immediate corollary of Theorem \ref{thm.lengthspec} below.
\end{proof}

\subsection{Proof of main results}\label{sec.mainresults}

We are now ready to prove our main results.
Fix a translation surface $(X,\omega)$, $x\in \mathcal{X}$ and a vector of saddle cost functions $\overline{\varphi} : \P_x \to \R$. We form an infinite matrix $M$ with rows and columns indexed by $\mathcal{S}$:
\[
M(s,s')=\begin{cases} 1 &\mbox{if }  t(s)=i(s'), \\
0 & \mbox{otherwise. }  \end{cases} 
\]
Then for each  $u\in \mathbb C$ and $v \in \C^d$ 	
we define the perturbed matrix $M_{u,v}$ by 
\[
M_{u,v}(s,s')=M(s,s')e^{-u\ell(s') - \langle v , \overline{\varphi}(s') \rangle}
\]
 for $s,s'\in \mathcal{S}$.

The translation hypotheses above are direct analogues of the graph properties $(\text{T}1), (\text{T}2), (\text{T}3)$ from Section \ref{sec.graph} and we can follow the arguments presented in that section.

\begin{proof}[Proof of Theorem \ref{thm.general} and Theorem \ref{thm.generalmulti}]
The results follow directly from the arguments in Section \ref{sec.graph} using the matrices $M_{u,v}$.
\end{proof}

We can then deduce our corollary for number of singularities visited.

\begin{proof}[Proof of Theorem \ref{thm.sing}]
We can apply Theorem \ref{thm.general} to deduce that the large deviation and central limit theorem hold for the singularity length $|p|$. We just need to show that the variance for the central limit theorem is strictly positive. Suppose it is not, this would imply that there exists $\tau > 0$ such that $|p|= \tau \ell(p)$ for all closed paths $p$. This would imply that $\ell(p) \in \frac{1}{\tau} \Z$ for all $p$ contradicting $(\text{T}3)$.
\end{proof}

We also have the following result. Recall that $|\cdot|_\theta$ denotes the angle change saddle cost function defined in the introduction.
\begin{theorem}\label{thm.angle}
Let $(X,\omega)$ be a translation surface and fix a singularity $x \in\mathcal{X}$. Let $|\cdot|_\theta$ and $\ell(\cdot)$ denote the angle change and geometric length of a saddle connection path respectively. Then there exists $\Lambda > 0$ such that for any $\epsilon > 0$ 
\[
\limsup_{T\to\infty} \frac{1}{T} \log \left( \mu_T \left\{ p \in \P_x : \left| \frac{|p|_\theta}{T} - \Lambda \right| > \epsilon \right\}\right) < 0.
\]
Furthermore there exists $\sigma^2 > 0$ such that for any $a,b \in \R, a<b$
\[
\lim_{T\to\infty} \mu_T\left\{ p\in \P_x: \frac{|p|_\theta - \Lambda T}{\sqrt{T}} \in [a,b] \right\} = \frac{1}{\sqrt{2 \pi} \sigma}\int_a^b e^{-t^2/2\sigma^2} \ dt.
\]
\end{theorem}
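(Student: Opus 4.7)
\textbf{Proof plan for Theorem \ref{thm.angle}.}  The strategy is to recognize the angle-change functional $|\cdot|_\theta$ as a cost in the framework of Section~\ref{sec.translation} and then apply Theorem~\ref{thm.general}. The contribution at each intermediate singularity depends on the pair of consecutive saddle connections meeting there, so it is most natural to view $|\cdot|_\theta$ as a cost on the edges of the graph $\G$ associated to $(X,\omega)$ (vertices = oriented saddle connections, edges = compatible consecutive pairs) rather than as a function on a single saddle. The growth bound required of a saddle cost function is immediate: the change at any singularity $x$ lies in $[0,\pi k(x)]$, so $|p|_\theta \le \pi K (|p|-1)$ where $K = \max_i k(x_i)$, and since saddle connections have length at least the systole $\ell_0 > 0$ one obtains $|p|_\theta \le (\pi K / \ell_0)\,\ell(p)$. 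With this set-up, the large deviation estimate and Gaussian convergence for $(|p|_\theta - \Lambda T)/\sqrt{T}$ follow directly from Theorems \ref{thm.ldt} and \ref{thm.mclt} applied to $\G$.

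It remains to verify $\Lambda > 0$ and $\sigma^2 > 0$. The functional $|\cdot|_\theta$ is non-negative and not identically zero: every saddle connection path with at least two saddles has angle change at least $\pi$ at each of its intermediate singularities, by the local-geodesic condition that the smallest angle between consecutive segments is $\ge \pi$. Hence Theorem~\ref{thm.general}(i) gives $\Lambda > 0$. For the variance we invoke Theorem~\ref{thm.general}(ii), and argue by contradiction: suppose there is $\tau \in \R$ with $|p|_\theta = \tau \ell(p)$ for every closed saddle connection path $p$. Using translation hypothesis (T2) together with the cone-angle bound $2\pi(k(x)+1) \ge 4\pi$ at the base singularity $x$, one can choose two closed saddle paths $p_1, p_2$ at $x$ whose concatenation $p_1 \ast p_2$ is again a valid saddle path, with junction angle $\theta_x \ge \pi$ at $x$. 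Then
\[
|p_1 \ast p_2|_\theta = |p_1|_\theta + |p_2|_\theta + \theta_x \quad \text{and} \quad \ell(p_1 \ast p_2) = \ell(p_1) + \ell(p_2),
\]
so the assumed linearity forces $\theta_x = 0$, contradicting $\theta_x \ge \pi$. Therefore $\sigma^2 > 0$.

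The main obstacle is really the bookkeeping in step one: the angle change is inherently a pair-dependent quantity, so verifying that it slots into the cost framework requires descending to the edge-level in the graph formulation of Section~\ref{sec.graph} rather than treating it as a vertex-level saddle cost. Once this identification is in place, each of the remaining steps (applying the general theorem, positivity of the mean, non-degeneracy of the variance via concatenation) is short and parallels the proof of Theorem~\ref{thm.sing}, the only substantive difference being that non-degeneracy is ruled out by the geometric obstruction $\theta_x \ge \pi$ rather than by the lattice condition (T3).
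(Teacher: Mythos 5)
Your reduction to the graph framework is fine, and in fact your observation that $|\cdot|_\theta$ is a pair-dependent quantity (a weight on transitions between consecutive saddle connections rather than on single saddles) is a fair and careful point; the transfer-matrix set-up of Section \ref{sec.graph} accommodates such edge-level costs with only notational changes, and the large deviation and CLT statements then follow from Theorems \ref{thm.ldt} and \ref{thm.mclt} exactly as you say. The genuine gap is in your treatment of positivity and non-degeneracy, and it stems from a misreading of the definition of the angle change. In the paper, the contribution of $p$ at a singularity is the \emph{smallest} change of direction (measured clockwise or anticlockwise), a number in $[0,\pi k(x)]$; it is the deviation from a straight continuation, not the angle between the consecutive segments. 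The local-geodesic condition says the two side angles are each at least $\pi$, but the angle-change contribution can be $0$ (or arbitrarily small) when the path continues straight through the cone point. Hence your claims that ``every path has angle change at least $\pi$ at each intermediate singularity'' and that the junction contributes $\theta_x \ge \pi$ are false under the paper's definition: either $\theta_x$ denotes the angle-change contribution, in which case your additivity formula $|p_1 \ast p_2|_\theta = |p_1|_\theta + |p_2|_\theta + \theta_x$ is correct but $\theta_x$ may well be $0$ and no contradiction results; or $\theta_x$ denotes the geometric junction angle, in which case $\theta_x \ge \pi$ holds but the additivity formula is wrong. Indeed, the paper's own proof exploits exactly the opposite phenomenon: it constructs closed saddle paths with $|p|_\theta/\ell(p)$ arbitrarily small, which would be impossible if each junction contributed at least $\pi$.

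The argument is repairable, but the repair needs the geometric input you omitted. Your linearity step correctly shows that if $|p|_\theta = \tau\,\ell(p)$ for all closed paths, then every admissible concatenation of two closed paths at $x$ must have angle change exactly $0$ at the junction. To contradict this you must produce closed paths $p_1, p_2$ at $x$ whose concatenation is a local geodesic with \emph{strictly positive} angle change at $x$; since the cone angle is at least $4\pi$, the outgoing directions for which both side angles strictly exceed $\pi$ form a nonempty open set, and one then needs the fact (used by the paper) that directions of saddle connections emanating from a singularity are dense, together with (T2) to close the two paths up. The same fact is needed to see that $|\cdot|_\theta$ is not identically zero, which your appeal to Theorem \ref{thm.general}(i) for $\Lambda>0$ currently also bases on the incorrect ``$\ge \pi$ per junction'' claim. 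With that density argument inserted, your concatenation route to non-degeneracy is a legitimate alternative to the paper's (which instead exhibits closed paths with $|p|_\theta/\ell(p)$ arbitrarily small), but as written the contradiction ``$\theta_x=0$ versus $\theta_x\ge\pi$'' does not stand.
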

\begin{proof}
As for the proof of Theorem \ref{thm.sing} we just need to verify the non-degeneracy criteria. To see this, note that given a singularity $x \in \mathcal{X}$, a direction $v$ emanating from $x$ and $\epsilon >0$, we can find a saddle connection from $x$ to another singularity that leaves $x$  at an angle that is within $\epsilon$ of the direction of $v$. Using this fact and $(\text{T}2)$ it is easy to see how to construct closed saddle paths for which $|p|_\theta/\ell(p)$ is arbitrarily small. This means that there cannot be $\tau > 0$ such that $|p|_\theta = \tau \ell(p)$ for all closed $p$.
\end{proof}

\begin{proof}[Proof of Corollary \ref{cor.holonomy}]
Applying Theorem \ref{thm.generalmulti} we deduce that a 2-dimensional central limit theorem holds for the holonomy vector. 
The mean for this central limit theorem is the zero vector by symmetry. Indeed, if we write $\overline{\Lambda} \in \R^2$ for the mean then we have that
\[
\overline{\Lambda} = \lim_{T\to\infty} \frac{1}{\#\P_x(T)} \sum_{p \in\P_x(T)} \frac{1}{T} \int_p \omega.
\]
However, for each $p \in \P_x(T)$ there is $p' \in \P_x(T)$ which is the path $p$ with the opposite orientation. Therefore
\[
\overline{\Lambda} =  \lim_{T\to\infty} \frac{1}{\#\P_x(T)} \sum_{p \in\P_x(T)} \frac{1}{T} \int_{p'} \omega =  \lim_{T\to\infty} \frac{1}{\#\P_x(T)} \sum_{p \in\P_x(T)} - \frac{1}{T} \int_p \omega = -\overline{\Lambda}.
\]
To conclude the proof we therefore need to show that the central limit theorem is non-degenerate. Suppose that it is, then there exists $v \in \R^2$ such that
\[
\left\langle v , \int_p \omega \right\rangle = 0 \ \text{ for all closed saddle paths $p$.}
\]
This would, by $(\text{T}2)$, imply that there exists $C >0$ such that for any saddle connection $s$ we have
\[
\left|\left\langle v , \int_s \omega \right\rangle\right| \le C. 
\]
However the holonomy vectors have angles that are dense in the circle and so this is not possible.
\end{proof}

\begin{proof}[Proof of Corollary \ref{cor.sing}]
As in the previous proof, we can immediately deduce that a multi-dimensional central limit theorem holds. In this case the mean is a strictly positive vector $\Lambda(Y) \in \R^k_{>0}$. If the central limit theorem is degenerate  then there exist $v \in \R^k$ such that
\[
\left\langle v, |p|_Y - \Lambda(Y)\ell(p) \right\rangle = 0 \ \text{ for all closed saddle paths $p$,}
\]
or $\langle v, |p|_Y \rangle = \langle v, \Lambda(Y) \rangle \ell(p)$ for all closed $p$. 
Suppose that $v$ is non-zero and it's $j$th component $v_j \neq 0$. If $s$ is a saddle connection starting and ending at $y_j$ then the above expression implies that $2 v_j =  v_j\Lambda(Y)_j \ell(s)$, i.e. every saddle connection from $y_j$ to itself has the same length. This is absurd and so $v$ must be the zero vector.
\end{proof}


\subsection{A general positive variance criteria}\label{sec.variance}
We would like a way of verifying whether a general class of saddle cost functions satisfy a non-degenerate central limit theorem. To do this we study the number theoretic properties of saddle connections. Recall that a collection of numbers $p_1,\ldots, p_N \in \R$ are rationally independent if $\sum_{j=1}^N a_j p_j = 0$ for $a_1,\ldots, a_N \in \Q$ implies that $a_1 = \cdots = a_N = 0$.
\begin{definition}
The geometric length spectrum of the translation surface $(X,\omega)$ is
\[
\mathcal{L}_{(X,\omega)} = \{ \ell(p) : p\in \P_x \ \text{for some $x \in \mathcal{X}$ and $p$ corresponds to a closed saddle path}\} .
\]
The saddle length spectrum is $\ell(\mathcal{S}) = \{ \ell(s) : s \in \mathcal{S}\}.$
\end{definition}
We need the following result. 
\begin{proposition} \label{prop.irrat}
Suppose that $p(x) = x^2 + ax + b$ is a quadratic polynomial with $a,b \in \R$ with distinct roots. Then the set
\[
\{ \sqrt{p(k)} : k \in \mathbb{Z}_{\ge 0} \}
\]
contains arbitrarily large subsets of rationally independent numbers.
\end{proposition}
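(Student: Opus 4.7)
The plan is to reformulate the statement in terms of square classes in the field $F = \mathbb{Q}(a, b) \subset \mathbb{R}$ and appeal to Kummer theory. By replacing $p(x)$ with $p(x+m)$ for a suitably large integer $m$ we may assume $p(k) > 0$ for all $k \in \mathbb{Z}_{\ge 0}$, so $\sqrt{p(k)}$ is a well-defined positive real number. Consider the map $\psi \colon \mathbb{Z}_{\ge 0} \to F^*/(F^*)^2$, $k \mapsto [p(k)]$, viewing $F^*/(F^*)^2$ as an $\mathbb{F}_2$-vector space of square classes. The key input from Kummer theory is that if $[p(k_1)], \ldots, [p(k_N)]$ are $\mathbb{F}_2$-linearly independent in $F^*/(F^*)^2$, then the multiquadratic extension $F(\sqrt{p(k_1)}, \ldots, \sqrt{p(k_N)})/F$ has degree $2^N$, with $F$-basis consisting of the $2^N$ products $\prod_{i \in I}\sqrt{p(k_i)}$ for $I \subset \{1, \ldots, N\}$. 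The elements $\sqrt{p(k_1)}, \ldots, \sqrt{p(k_N)}$ are then $F$-linearly independent (being a subset of this basis), and a fortiori $\mathbb{Q}$-linearly independent. Therefore it suffices to prove that the $\mathbb{F}_2$-span of the image of $\psi$ is infinite dimensional.

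To this end I would argue by contradiction. Assume the image is contained in a finite subgroup $H \subset F^*/(F^*)^2$ and partition $\mathbb{Z}_{\ge 0} = \bigsqcup_{[q] \in H} A_{[q]}$, where $A_{[q]} = \{k : [p(k)] = [q]\}$. An integer $k$ lies in $A_{[q]}$ precisely when $p(k) = q f_k^2$ for some $f_k \in F$, which upon setting $u = 2k+a$ becomes the Pell-like identity
\[
u^2 - 4 q f_k^2 = a^2 - 4b =: \Delta,
\]
with $\Delta \ne 0$ since $p$ has distinct roots. The contradiction will come from showing that each $A_{[q]}$ has density zero in $\mathbb{Z}_{\ge 0}$, since then the finite union $\bigcup_{[q] \in H} A_{[q]}$ cannot exhaust $\mathbb{Z}_{\ge 0}$.

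The density bound splits into two cases. If at least one of $a, b$ is transcendental over $\mathbb{Q}$, then $p(k)$ is, up to associates, irreducible in an appropriate polynomial ring embedded in $F$: for example, if $b$ is transcendental over $\mathbb{Q}(a)$, then $p(k) = b + (k^2 + ak)$ is a degree-one polynomial in $b$, hence irreducible in $\mathbb{Q}(a)[b]$. Distinct integers $k$ then yield distinct irreducibles, and by unique factorization their classes in $F^*/(F^*)^2$ are $\mathbb{F}_2$-linearly independent, save for finitely many exceptional $k$ at which $p(k)$ might itself factor further; this already contradicts the finite image assumption. If instead $F$ is a number field, the equation rewrites as the norm equation $N_{F(\sqrt{q})/F}(u - 2f\sqrt{q}) = \Delta$, and its solution set is a finite union of cosets of the unit group $\mathcal{O}_{F(\sqrt{q})}^*$ by Dirichlet's unit theorem. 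Since nontorsion units grow at least exponentially in any archimedean absolute value, the integrality constraint $u = 2k + a$ with $k \in \mathbb{Z}_{\ge 0}$ forces $|A_{[q]} \cap [0, N]| = O(\log N)$.

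The main obstacle is the number field case: although the structural input from Dirichlet's unit theorem is classical, one still needs to combine it carefully with the integrality constraint $k \in \mathbb{Z}$ to extract the logarithmic counting bound. The transcendental case is by contrast essentially routine, reducing to unique factorization in a polynomial ring together with a short case analysis covering the finitely many $k$ where $p(k)$ factors nontrivially.
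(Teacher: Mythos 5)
Your strategy (reduce, via Kummer theory, to showing that the square classes $[p(k)]$ span an infinite-dimensional subspace of $F^*/(F^*)^2$ with $F=\mathbb{Q}(a,b)$, and then rule out finite span by showing each square class has density zero) is genuinely different from the paper's argument, which instead places all $\sqrt{p(k)}$ inside a single finitely generated, integrally closed ring $R$ and contradicts Lang's finiteness theorem for $R$-integral points on the genus-one curve $Y^2=p(x^2+m)$. The Kummer-theoretic reduction itself is correct. The genuine gap is in your ``transcendental'' case: you only treat the sub-case where $b$ is transcendental over $\mathbb{Q}(a)$ (and implicitly its mirror, $a$ transcendental over $\mathbb{Q}(b)$). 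The remaining sub-case, where $\operatorname{trdeg}_{\mathbb{Q}}\mathbb{Q}(a,b)=1$ but both $a$ and $b$ are transcendental over $\mathbb{Q}$ (hence algebraically dependent), is not covered, and there your key claim that $p(k)$ is, up to associates, irreducible in a suitable polynomial ring can simply fail: for $a=2t$, $b=t^2-1$ (distinct roots) one has $F=\mathbb{Q}(t)$ and $p(k)=(t+k-1)(t+k+1)$, reducible in $\mathbb{Q}[t]$ for every $k$; worse, if $b$ is a non-rational algebraic function of $a$ then $F$ is a non-rational function field, no polynomial ring has $F$ as fraction field, and unique factorization is unavailable altogether. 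Handling this sub-case by your method would require, e.g., finiteness of $\operatorname{Pic}/2\operatorname{Pic}$ of the relevant affine curve (via Mordell--Weil) together with finite generation of its unit group (Rosenlicht), i.e.\ an argument of comparable depth to your number-field case rather than a routine one. Note also that this missing sub-case is precisely the one arising in the paper's application: with $a=-2h\cos(\theta)/w$ and $b=h^2/w^2$, taking $h/w$ transcendental and $\cos(\theta)$ rational makes $a,b$ algebraically dependent transcendentals.

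The number-field case, which you flag as the main obstacle, is sound in outline and can be completed: after clearing a fixed denominator, $x=u-2f\sqrt{q}$ becomes an algebraic integer of fixed relative norm; since $k$ is a rational integer, $|u|_v\le 2N+O(1)$ at \emph{every} archimedean place, whence $|x|_w\ll N$ and, using $x\,\sigma(x)=\Delta$, also $|x|_w\gg 1/N$ at every archimedean place of $F(\sqrt{q})$; in each of the finitely many unit cosets the logarithmic embedding then confines the unit to a box of side $O(\log N)$, giving $O((\log N)^{r})$ (not $O(\log N)$, but still density zero) admissible $k\le N$; the degenerate possibility $\sqrt{q}\in F$ needs the same counting with units of $\mathcal{O}_F$. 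These are fixable details you acknowledged. The step that would actually fail as written is the transcendence-degree-one sub-case above, which needs either a new argument or the paper's uniform approach, which avoids the case split entirely.
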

We are very grateful to Peter M\"uller who suggested the following proof of this proposition \cite{Muller}.
\begin{proof}
Suppose for contradiction that the conclusion is wrong. Then there are $x_1, \ldots, x_n \in \R$ such that $\sqrt{p(k)}$ is in the rational span of these $x_i$ for all $k$ large enough. Now let $R$ be the ring generated by the $x_i$ and $a,b$ and write $K$ for the quotient field of $R$. We then have that $\sqrt{p(k)} \in K$ for all large $k$. By replacing $R$ by $R[1/a]$ for some $a \in \mathbb{Z}$ we can assume that the integral closure of $R$ in $K$ is integrally closed and finitely generated \cite[Proposition 4.1]{serge}.

We now note that, since $p(x)$ has distinct roots, we can find $m \in \Z$ such that $p(x^2 + m)$ has distinct roots. Then the curve $Y^2 - p(x^2 + m)$ is an elliptic curve $E$. Since $R$ is finitely generated $E(R)$ (i.e. the points on $E$ that have coordinates in $R$) is finite by \cite[Theorem 2.4 in $\S 8$]{serge}. Since $\Z$ is contained in $R$ and $R$ is integrally closed, this is a contradiction: by our assumption $\sqrt{p(k^2 + m)} \in R$ for all $k$ large enough implying that $(k, \sqrt{p(k^2 + m)}) \in E$ for all large $k$.
\end{proof}
We then have the following.
\begin{proposition}\label{thm.lengthspec}
Let $(X,\omega)$ be a translation surface (with singularities). Then both the geometric length spectrum and the saddle length spectrum contain arbitrarily large subsets of numbers that are rationally independent, i.e. for any $N \ge 1$ there exists $l_1, \ldots l_N \in L_{(X,\omega)}$ such that the only solution $\alpha_1, \ldots, \alpha_n \in \Q$ such that 
\[
\sum_{j=1}^N \alpha_j l_j = 0
\]
is the solution $\alpha_1 = \cdots = \alpha_N = 0$ (and similarly for $\ell(\mathcal{S})$).
\end{proposition}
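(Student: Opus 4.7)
The plan is to exhibit, in each of the two spectra, an infinite family of lengths of the form $\sqrt{p(k)}$ for a fixed quadratic polynomial $p$ with distinct roots, and then invoke Proposition \ref{prop.irrat}. The requisite families will come from an arithmetic progression of saddle connections inside a flat cylinder of $(X,\omega)$.

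First I would fix a maximal flat cylinder $C$ on $(X,\omega)$, whose existence on any translation surface with singularities is a standard fact. Denote its circumference by $w$ and its height by $h$, and choose a singularity $x$ on one boundary of $C$ and a singularity $y$ on the opposite boundary. Unfolding $C$ to an infinite strip, the lifts of $y$ form an arithmetic progression parallel to the core direction, producing an infinite family of saddle connections $s_k$ ($k \in \mathbb{Z}_{\geq 0}$) from $x$ to $y$ with holonomy vectors $(a + kw,\, h)$ and lengths
\[
\ell(s_k) = \sqrt{(a+kw)^2 + h^2} = \sqrt{p(k)}, \qquad p(k) := w^2 k^2 + 2aw\,k + (a^2 + h^2).
\]
Since the discriminant $-4w^2 h^2$ is strictly negative, $p$ has two distinct (complex conjugate) roots. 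Proposition \ref{prop.irrat} then supplies arbitrarily large rationally independent subsets of $\{\sqrt{p(k)} : k \geq 0\} \subseteq \ell(\mathcal S)$, establishing the claim for the saddle length spectrum.

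For the geometric length spectrum, I would close the family $\{s_k\}$ up into closed saddle paths. If $C$ and $x$ can be chosen so that $y = x$, each $s_k$ is itself a closed saddle path and we are done. Otherwise, fix a single return saddle path $\pi$ from $y$ back to $x$ whose first edge forms an angle $\geq \pi$ with the common incoming direction of the $s_k$ at $y$; such $\pi$ exists by Hypothesis (T2) together with the fact that the cone angle $2\pi(k(y)+1) \geq 4\pi$ at $y$ leaves enough room for at least one valid departing direction. Because every $s_k$ is parallel to the cylinder direction, the angle condition at $y$ is independent of $k$, so the same $\pi$ produces valid closed saddle paths $p_k := s_k \cdot \pi$ at $x$ of length $\sqrt{p(k)} + c$, with $c = \ell(\pi)$ fixed. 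Rational independence then transfers from $\{\sqrt{p(k)}\}$ to $\{\sqrt{p(k)} + c\}$ by a dimension count: if the latter set were contained in a finite dimensional $\mathbb Q$-subspace $W$ of $\mathbb R$, then $\{\sqrt{p(k)}\}$ would lie in the still finite dimensional space $W + c\mathbb Q$, contradicting Proposition \ref{prop.irrat}. Hence $\{\ell(p_k)\} \subset \mathcal L_{(X,\omega)}$ also contains arbitrarily large rationally independent subsets.

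The main obstacle I expect is the geometric bookkeeping in the closing-up step, namely verifying that a single return path $\pi$ can be chosen to make $s_k \cdot \pi$ a valid saddle connection path for every $k$. The point is that this reduces to a single, $k$-independent angle choice at $y$ precisely because the family $\{s_k\}$ is parallel, which is the reason the cylinder construction (rather than a more ad hoc family of saddle connections) is the correct source of the arithmetic progression in lengths.
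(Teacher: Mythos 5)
Your treatment of the saddle length spectrum is essentially the paper's own argument: you produce saddle connections crossing a cylinder with lengths $\sqrt{(a+kw)^2+h^2}$ and invoke Proposition \ref{prop.irrat} (the paper writes the same lengths via the law of cosines as $\sqrt{h^2+(kw)^2-2khw\cos\theta}$; in either form one should first factor out $w$ so the quadratic is monic, as Proposition \ref{prop.irrat} requires --- a harmless rescaling). The genuine gap is in your closing-up step for $\mathcal{L}_{(X,\omega)}$. Your justification that one fixed return path $\pi$ works for every $k$ rests on the claim that ``every $s_k$ is parallel to the cylinder direction'' and hence that the $s_k$ have a ``common incoming direction'' at $y$. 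This is false: each $s_k$ crosses the cylinder from one boundary component to the other, so none is parallel to the core, and their incoming directions at $y$ (the directions of $(a+kw,h)$) are pairwise distinct, merely converging to the core direction as $k\to\infty$. Moreover (T2) only lets you prescribe the initial \emph{edge} of $\pi$, not its initial direction. The step is repairable: for all large $k$ the incoming directions lie in an arbitrarily small cone, and since the cone angle at $y$ is at least $4\pi$ the outgoing directions making angle at least $\pi$ (both ways around the cone point) with \emph{every} direction in that small cone form a nonempty open cone; one then needs a saddle connection leaving $y$ inside this cone (directions of saddle connections from a fixed singularity are dense --- a fact the paper also uses in the proof of Theorem \ref{thm.angle}), and (T2) applied with that edge as initial edge and any saddle connection terminating at $x$ as final edge yields a single $\pi$ valid for all sufficiently large $k$; discarding finitely many $k$ is harmless. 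Your dimension count transferring independence from $\{\sqrt{p(k)}\}$ to $\{\sqrt{p(k)}+c\}$ is correct as stated.

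For comparison, the paper sidesteps all of this angle bookkeeping with an algebraic reduction: by (T2), any saddle connection $s$ with reverse $s'$ embeds in a closed saddle path $p_1sp_2s'$ with $p_1,p_2$ closed, so $2\ell(s)=\ell(p_1sp_2s')-\ell(p_1)-\ell(p_2)$ is an integer combination of elements of $\mathcal{L}_{(X,\omega)}$; hence if $\mathcal{L}_{(X,\omega)}$ spanned a finite-dimensional $\Q$-subspace of $\R$, so would $\ell(\mathcal{S})$, and the statement for $\mathcal{L}_{(X,\omega)}$ follows from the one for $\ell(\mathcal{S})$ that you have already established. Substituting this reduction for your closing-up construction would complete your proof without any further geometric input.
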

\begin{proof}
We claim that $\mathcal{L}_{(X,\omega)}$ contains arbitrarily large subsets of rationally independent numbers if and only if $\ell(\mathcal{S})$ contains arbitrarily large subsets of rationally independent numbers. Indeed, by $(\text{T}2)$, for any saddle connection $s$ we can find closed saddle paths $p_1,p_2$ such that $p_1 s p_2 s'$ is a closed saddle path where $s'$ is $s$ with reversed orientation. We then have that $2\ell(s) = \ell(p_1) + \ell(p_2) + \ell(p_1 s p_2 s')$ and the claim follows. We therefore just need to verify the theorem for $\ell(\mathcal{S})$.

We now recall that translation surfaces contain embedded cylinders: a cylinder such that the boundaries circles are saddle connections. We can then find $h,w > 0$ and $\theta \in (0,\pi)$ such that $\ell(\mathcal{S})$ contains the lengths
\[
\sqrt{ h^2 + (kw)^2 - 2k hw \cos(\theta) } \ \ \text{ for $k \ge 1$.}
\]
Indeed, the number $\sqrt{ h^2 + (kw)^2 - 2k hw \cos(\theta) } $ is the last side lengths of the triangle with a side of length $h$ that meets a side of length $kw$ at angle $\theta$.

To conclude the proof of the theorem we apply Proposition \ref{prop.irrat} to the polynomial
\[
p(x) = x^2 - \left( \frac{2h \cos(\theta)}{w}\right) \, x  + \frac{h^2}{w^2}
\]
which we note has distinct roots.
\end{proof} 

\begin{remark}
The above proof actually shows that $\mathcal{L}_{(X,\omega)}$ (and also $\ell(\mathcal{S})$) is not contained in a ring generated by finitely many real numbers.
\end{remark}

We then deduce the following.

\begin{corollary}
Let $(X, \omega)$ be a translation surface and fix $x \in \mathcal{X}$. Let $\ell(\cdot)$ denote the  geometric length and let $\varphi(\cdot)$ be a saddle cost function on saddle connection paths. Suppose further that $\varphi$ is positive, non-zero and that there exist $\alpha_1,\ldots, \alpha_n \in \R$ such that 
\[
\{ \varphi(p) : \text{$p$ is a closed saddle path}\} \subseteq \alpha_1 \Q \oplus \alpha_2 \Q \oplus \cdots \oplus \alpha_n\Q.
\]
Then there exist $\Lambda >0$ and $\sigma^2 > 0$ such that for any $a,b \in \R, a<b$
\[
\lim_{T\to\infty} \mu_T\left\{ p\in \P_x: \frac{\varphi(p) - \Lambda T}{\sqrt{T}} \in [a,b] \right\} = \frac{1}{\sqrt{2 \pi} \sigma}\int_a^b e^{-t^2/2\sigma^2} \ dt.
\]
\end{corollary}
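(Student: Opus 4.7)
The plan is to apply Theorem \ref{thm.general} and verify the two non-triviality conditions: that $\Lambda>0$ and that $\sigma^2>0$. The first is immediate from part (i) of Theorem \ref{thm.general}, since by hypothesis $\varphi$ is positive and not identically zero. So the entire content of the proof is to rule out the degeneracy condition in part (ii): namely, to show there is no $\tau\in\mathbb{R}$ for which $\varphi(p)=\tau\ell(p)$ for every closed saddle connection path $p$.

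I would argue by contradiction, splitting on whether such a $\tau$ is zero. If $\tau=0$, then $\varphi(p)=0$ for every closed saddle path $p$. Since $\varphi$ is not identically zero, there is a saddle connection $s$ with $\varphi(s)>0$. Using condition $(\mathrm{T}2)$, I can find a saddle connection path $q$ starting at $t(s)$ and ending at $i(s)$, so that $p:=s\cdot q$ is a closed saddle path. By positivity of $\varphi$ on every path (each edge, hence each path, has nonnegative $\varphi$-value), $\varphi(p)=\varphi(s)+\varphi(q)\geq \varphi(s)>0$, contradicting the assumed identity.

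The more interesting case is $\tau\neq 0$. Here the assumed inclusion gives
\[
\tau\ell(p)=\varphi(p)\in \alpha_1\mathbb{Q}\oplus\cdots\oplus\alpha_n\mathbb{Q}
\]
for every closed saddle path $p$, so the geometric length spectrum $\mathcal{L}_{(X,\omega)}$ is contained in the finite-dimensional $\mathbb{Q}$-vector space $(\alpha_1/\tau)\mathbb{Q}\oplus\cdots\oplus(\alpha_n/\tau)\mathbb{Q}$. This directly contradicts Proposition \ref{thm.lengthspec}, which guarantees that $\mathcal{L}_{(X,\omega)}$ contains arbitrarily large rationally independent subsets (in particular, one of size $n+1$, which cannot sit inside an $n$-dimensional $\mathbb{Q}$-space). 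Thus no such $\tau$ exists, and $\sigma^2>0$ by Theorem \ref{thm.general}(ii), so the non-degenerate Gaussian limit follows.

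The main obstacle, such as it is, is the $\tau=0$ case, because one must translate a nonvanishing statement about $\varphi$ on a single saddle connection into a nonvanishing statement on \emph{closed} paths; this is exactly where hypothesis $(\mathrm{T}2)$ plays the essential role. Once $\tau=0$ is dispatched, the $\tau\neq 0$ case is a clean application of the rational independence result already established in Proposition \ref{thm.lengthspec}, and no new machinery is required.
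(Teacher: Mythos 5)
Your proof is correct and follows essentially the same route as the paper: apply Theorem \ref{thm.general} and rule out degeneracy by observing that $\varphi(p)=\tau\ell(p)$ for all closed saddle paths would force the length spectrum into a finite-dimensional $\mathbb{Q}$-vector space, contradicting Proposition \ref{thm.lengthspec}. The only difference is that the paper identifies the degenerate $\tau$ with the (strictly positive) mean $\Lambda$ and so never needs your separate $\tau=0$ case, which you correctly dispatch via positivity and $(\mathrm{T}2)$ anyway.
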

\begin{proof}
We just need to verify the non-degeneracy. Suppose that the variance is $0$. This would imply that for any closed saddle path $p$, $\varphi(p) = \Lambda \ell(p)$. However this would imply that $L_{(X,\omega)}$ does not contain arbitrarily large subsets of rationally independent numbers contradicting Proposition \ref{thm.lengthspec}.
\end{proof}

\begin{example}
The above corollary implies that there is a non-degenerate central limit theorem for the saddle cost functions
\[
s \mapsto |\mathfrak{R}_s|, s\mapsto  |\mathfrak{I}_s| \ \ \ \text{ where  $\mathfrak{R}_s$ and $ \mathfrak{I}_s$ are the real and imaginary change along $s$ (as in the introduction).}
\]
The same is true for positive linear combinations of these functions. This follows from the fact that the relative homology $H_1(X,\mathcal{X}, \mathbb{Z})$ has finite rank.
\end{example}

\bibliographystyle{alpha}
\bibliography{CLT}

\end{document}